\documentclass[a4paper,11pt,reqno]{article}
\usepackage{relsize}
\usepackage{cite}
\usepackage{color}
\usepackage{mathtools}
\usepackage[dvipsnames]{xcolor}

\usepackage{hyperref, enumitem}

\hypersetup{
  colorlinks   = true, 
  urlcolor     = blue, 
  linkcolor    = Purple, 
  citecolor   = red 
}
\usepackage{amsmath,amsthm,amssymb}

\usepackage[margin=2.5cm]{geometry}

\usepackage{graphicx}
\usepackage[T1]{fontenc}
\usepackage{authblk}
\usepackage[english]{babel}
\usepackage{stackrel}

\theoremstyle{definition}
\newtheorem{theorem}{Theorem}[section]

\newtheorem{proposition}[theorem]{Proposition}
\newtheorem{corollary}[theorem]{Corollary}
\newtheorem{lemma}[theorem]{Lemma}
\newtheorem{problem}[theorem]{Problem}
\newtheorem{definition}[theorem]{Definition}
\newtheorem{example}[theorem]{Example}
\newtheorem{remark}[theorem]{Remark}

\newtheorem{conjecture}{Conjecture}

\newcommand{\F}{\mathbb{F}}
\newcommand{\Z}{\mathbb{Z}}
\newcommand{\M}{\mathcal{M}}

\DeclareMathOperator{\wt}{wt}
\DeclareMathOperator{\supp}{supp}
\DeclareMathOperator{\Char}{Char}
\DeclareMathOperator{\PG}{PG}
\DeclareMathOperator{\Gr}{Gr}

\newcommand{\qbinom}[2]{\genfrac{[}{]}{0pt}{}{#1}{#2}_{q}}

\title{{\bf Extremal character spectra of finite projective systems}}

\author{Shitao Li\thanks{School of Internet, Anhui University, Hefei, Anhui 230039, China.
E-mail: \textsf{lishitao0216@163.com}.},~ 
 Yang Li\thanks{School of Physical and Mathematical Sciences, Nanyang Technological University, Singapore 637371, Singapore. E-mail: \textsf{li-y@ntu.edu.sg}. },~
 Gaojun Luo\thanks{School of Mathematics, Nanjing University of Aeronautics and Astronautics, Nanjing, Jiangsu 211106, China. E-mail: \textsf{gaojun\_luo@nuaa.edu.cn}}}

\begin{document}
\maketitle

\begin{abstract}
Let \(\mathcal M\) be a finite spanning multiset of points in \(\PG(k-1,q)\), and let \(\Char_{\mathcal M}\) be its character function. For \(1\le r\le k\), we study the maximum number of distinct values attained by \(\Char_{\mathcal M}\) on the codimension-\(r\) flats under a total-multiplicity budget. The spectrum is subject to two natural upper bounds, whose extremal cases lead respectively to \(r\)-maximum character spectrum (\(r\)-MCS) and \(r\)-full character spectrum (\(r\)-FCS) systems. We establish the existence of \(r\)-FCS systems in a broad parameter range, and solve the size-constrained extremal problem completely when \(r=k-1\), including the minimum-cost multiplicity profiles. For \(1\le r<k\), put \(M_r=\qbinom{k}{r}\) and \(B_r=\qbinom{k-1}{r}\). The minimum size \(\nu_r(k,q)\) of an \(r\)-MCS system satisfies
\[
\left\lceil\frac{\binom{M_r}{2}}{B_r}\right\rceil
\le \nu_r(k,q)
\le \binom{M_r-1}{2}+\frac{q^k-q^{k-r}}{q-1}.
\]
The upper bound is at most \(\binom{M_r}{2}\), strictly less when \(r\le k-2\), and exact when \(r=k-1\). Its proof applies positive additive labelings to the complement of one flat, whose trace is empty while all remaining traces are distinct. For \(r=1\), we improve the known upper bounds on the minimum length of such systems. We also develop global moment identities and local flat-incidence bounds that impose strong structural constraints on large character spectra. Under the projective-system correspondence, complementary character spectra are the support-weight spectra of \(r\)-dimensional subcodes.
\end{abstract}

\noindent\textbf{Keywords:} extremal combinatorics, finite projective geometry, projective system, character spectrum, Grassmannian, separating set.

\medskip
\noindent\textbf{2020 Mathematics Subject Classification:} Primary
05D05; Secondary 05B25, 51E20, 11T71.

\section{Introduction}
A classical problem in finite geometry is to study point sets, multisets, or families of subspaces through their incidences with subspaces of prescribed dimension. Blocking sets, designs, covering problems, intersection theorems, and resolving systems are classical examples of this general principle; see, for example, \cite{ABDN2024,ABNR2022,Ball2012,BBDM2023,BT2026,BDGP2024,FW1981,KSS2025,Katona1966}.
In this paper, we investigate a different but closely related extremal problem for such incidence structures.
Rather than asking whether an intersection is nonempty or spanning, we assign nonnegative integer multiplicities to the points of a finite projective space and consider the resulting weighted incidence values on all flats of a fixed codimension. Therefore, our objective is not a single extremal intersection size, but the number of distinct values attained by the corresponding weighted incidence map.

Let \(q\) be a prime power, let \(k\ge1\), and put \(V=\F_q^k\) and \(\Pi=\PG(k-1,q)\). Let \(\mathcal M=(M,m)\) be a spanning multiset in \(\Pi\), where \(m:M\to\Z_{>0}\) assigns a positive integer multiplicity to each point of \(M\).
Its character function is defined by
\[\Char_{\mathcal M}(A)=\sum_{P\in A\cap M}m(P),\quad A\subseteq\Pi.
\]
Write \(|\M|=\Char_{\mathcal M}(\Pi)\) for the total multiplicity of \(\mathcal M\). We refer to such a spanning multiset as a projective \([|\M|,k]\) system.
For \(1\le r\le k\), let \(\mathcal F_r(\Pi)\) denote the family of codimension-\(r\) flats of \(\Pi\), with the convention that \(\mathcal F_k(\Pi)=\{\varnothing\}\). Consider the character spectrum and its complementary spectrum
\[\Sigma_r(\mathcal M)=\bigl\{\Char_{\mathcal M}(\Lambda):\Lambda\in\mathcal F_r(\Pi)\bigr\}
,\quad
\Omega_r(\mathcal M)=\bigl\{|\M|-\Char_{\mathcal M}(\Lambda):\Lambda\in\mathcal F_r(\Pi)\bigr\}.
\]
Since the two spectra have the same cardinality, our extremal problem is to determine how many distinct values the character function of a projective system can assume on \(\mathcal F_r(\Pi)\). Let
\(\ell_r(\mathcal M)=|\Sigma_r(\mathcal M)|=|\Omega_r(\mathcal M)|\)
be the size of the two spectra. For \(n\ge k\), let 
\[
L_r(n,k,q)
=
\max\{
\ell_r(\mathcal M):
\mathcal M\text{ is spanning and }|\mathcal M|\le n
\}.
\] 
Thus \(L_r(n,k,q)\) is the extremal function under a total-multiplicity budget \(n\).

The terminology is inherited from the coding-theoretic correspondence. 
Choose representatives of the points of \(\mathcal M\), repeated according to their multiplicities, as the columns of a \(k\times |\M|\) matrix, and let \(C_{\mathcal M}\) be the code generated by its rows.
Orthogonal complementation identifies the values in \(\Omega_r(\mathcal M)\) with the support weights of the \(r\)-dimensional subcodes of \(C_{\mathcal M}\). This correspondence, together with the relevant coding terminology, is established in Subsection~\ref{subsec:coding-correspondence}. For \(r=1\), the corresponding maximum-weight and full-weight spectrum problems were studied in \cite{SZSC,A2019,AN2019,CT2018}. In addition, generalized Hamming weights and higher support distributions were investigated in \cite{Wei1991,Klove1992,Britz2002,Britz2007}. The geometric interpretation of small weight spectra provides a related perspective. Calderbank and Kantor \cite{CK1986} survey the connections between two-weight codes, projective two-intersection sets, and strongly regular graphs. Here we instead seek the largest possible number of distinct incidence values and allow flats of arbitrary fixed codimension.

The minimum-distance parameter of a projective \([n,k,d]\) system depends only on its largest hyperplane character and is given by
\(d=n-\max\{\Char_{\mathcal M}(H):H\text{ is a hyperplane of }\Pi\}.\)
Our problem is higher-order in two senses: hyperplanes are replaced by flats of arbitrary prescribed codimension, and the largest character is replaced by the entire image of the character function. Fixed-rank layers of the subspace lattice belong to Delsarte's regular-semilattice framework \cite{Delsarte1976}, while multiplicity-sensitive covering questions for subspaces have also been studied from an extremal viewpoint \cite{BBDM2023}. Here one integral weighting is fixed, and the objective is the cardinality of its image on the whole Grassmannian.
From a linear-algebraic viewpoint, the characters are the coordinates of the product of a point--flat incidence matrix with a nonnegative integer multiplicity vector. The ranks of finite-geometric incidence and inclusion matrices were studied by Kantor \cite{Kantor1972} and Frumkin and Yakir \cite{FY1990}. Our extremal parameter concerns the number of distinct output coordinates, rather than the rank of the incidence transformation.

For every projective system \(\mathcal M\), there are two universal upper bounds on \(\ell_r(\mathcal M)\). The domain of the character function has \(|\mathcal F_r(\Pi)|=\qbinom{k}{r}\) elements. Moreover, if \(\Lambda=\PG(U)\in\mathcal F_r(\Pi)\), the images in \(V/U\) of representatives of the occurrences outside \(\Lambda\) span this \(r\)-dimensional quotient. Hence at least \(r\) occurrences lie outside \(\Lambda\), so \(\Omega_r(\mathcal M)\subseteq\{r,\ldots,|\M|\}\). Consequently, we have \(\ell_r(\M)
\le\min\left\{\qbinom{k}{r},|\M|-r+1\right\}.\) This implies that 
\begin{equation}\label{eq:intro-two-bounds}
L_r(n,k,q)\le\min\left\{\qbinom{k}{r},\,n-r+1\right\}
\end{equation}
The equality cases of these two bounds lead to distinct extremal classes.

\begin{definition}\label{def:character-extremes}
A finite projective \([|\M|,k]\) system \(\mathcal M\) is said to have an \emph{\(r\)-maximum character spectrum}, or to be \(r\)-MCS, if \(\ell_r(\mathcal M)=\qbinom{k}{r}\). Equivalently, \(\Char_{\mathcal M}\) is injective on \(\mathcal F_r(\Pi)\). It is said to have an \emph{\(r\)-full character spectrum}, or to be \(r\)-FCS, if \(\ell_r(\mathcal M)=|\M|-r+1\). Equivalently, \(\Omega_r(\mathcal M)=\{r,r+1,\ldots,|\M|\}\).
\end{definition}
The two upper bounds in \eqref{eq:intro-two-bounds} lead to two complementary extremal problems. The first is to determine those values of \(n\) for which the interval bound \(L_r(n,k,q)=n-r+1\) is attained, equivalently, for which there exists an \(r\)-FCS projective \([n,k]\) system. The second is to determine the smallest \(n\) for which the Grassmannian bound \(L_r(n,k,q)=\qbinom{k}{r}\) is attained. 
For later reference, we introduce the corresponding threshold.
We denote by
\begin{equation}\label{def:nu}
\nu_r(k,q)=\min\left\{n\ge k:L_r(n,k,q)=\qbinom{k}{r}\right\}
\end{equation}
the minimum size of an \(r\)-MCS projective system. This minimum is finite. Indeed, assigning the multiplicities \(2^0,2^1,2^2,\ldots\) to the points of \(\Pi\), enumerated in any order, as in the binary construction of \cite[Theorem~3.4]{AN2019}, gives a spanning projective multiset whose character is injective on all projective subspaces: distinct subspaces have distinct point sets, and binary expansions are unique. Theorem~\ref{thm:intro-threshold} below gives much smaller \(r\)-MCS systems.
Note that \(L_r(n,k,q)\) is defined by allowing projective systems of total multiplicity at most \(n\). Hence it is useful to distinguish the two equality cases. 
If an \(r\)-FCS projective system \(\mathcal M\) realizes \(L_r(n,k,q)=n-r+1\), then necessarily
\(|\M|=n\). Indeed, \(\ell_r(\mathcal M)=|\mathcal M|-r+1\le n-r+1,\) and the equality forces \(|\mathcal M|=n\). 
By contrast, an \(r\)-MCS projective system attaining \(L_r(n,k,q)=\qbinom{k}{r}\) may have total multiplicity strictly smaller than \(n\). However, at the threshold \(n=\nu_r(k,q)\), every such system has size exactly \(\nu_r(k,q)\); otherwise an \(r\)-MCS projective system of smaller size would contradict the minimality of \(\nu_r(k,q)\).

For convenience, write \(M_r=\qbinom{k}{r}\), \(B_r=\qbinom{k-1}{r}\), \(H_r=\binom{M_r}{2}\), and \(\vartheta_k=(q^k-1)/(q-1)\). Our first theorem gives a sufficient condition for the interval bound to be attained and determines the budget-constrained extremal function exactly in the point case.

\begin{theorem}\label{thm:intro-fixed-size}
Let \(1\le r\le k\le n\).
\begin{enumerate}
\item[(i)] If \(n-r+1<2^{k-r+1}\), then \(L_r(n,k,q)=n-r+1\). Equivalently, there exists an \(r\)-FCS projective \([n,k]\) system.

\item[(ii)] Suppose \(k\ge2\) and \(r=k-1\). Then
\[L_{k-1}(n,k,q)=\max\{m:2\le m\le\vartheta_k,\ \mu_k(m)\le n\},\]
where \(\mu_k(m)=\binom{m}{2}+\max\{0,k-m+1\}\). Consequently, \(\nu_{k-1}(k,q)=\binom{\vartheta_k}{2}\), and a \((k-1)\)-FCS projective \([n,k]\) system exists if and only if \(n\in\{k,k+1\}\).
\end{enumerate}
\end{theorem}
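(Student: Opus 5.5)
The plan for (i) is an explicit construction on the $k$ standard basis points with binary-type multiplicities. For (ii), I would observe that codimension-$(k-1)$ flats are points, so $\ell_{k-1}(\mathcal M)$ just counts distinct multiplicities, and then do a direct optimization.

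\emph{Part (i).} By \eqref{eq:intro-two-bounds} it suffices to construct a spanning $\mathcal M$ with $|\mathcal M|=n$ and $\Omega_r(\mathcal M)\supseteq\{r,\dots,n\}$. Put $s=n-r+1$, so $k-r+1\le s\le 2^{k-r+1}-1$, using $n\ge k$.
\begin{itemize}
\item Let $\mathcal M$ consist of the basis points $\langle e_1\rangle,\dots,\langle e_{r-1}\rangle$ with multiplicity $1$, and $\langle e_r\rangle,\dots,\langle e_k\rangle$ with positive multiplicities $a_0,\dots,a_{k-r}$ summing to $s$. Then $\mathcal M$ is spanning and $|\mathcal M|=n$.
\item Flexibility lemma: for any set $T$ of at most $k-r$ basis points there is a codimension-$r$ flat $\Lambda$ with $\Lambda\cap M=T$. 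To see this, pass to $V/\langle T\rangle\cong\F_q^{k-|T|}$. There, choose any subspace of the right dimension inside the kernel of the functional $\sum_j x_j$; this kernel contains no image of a remaining basis vector.
\item Consequently $\Omega_r(\mathcal M)$ contains $n-w(T)$ for every such $T$, where $w(T)$ is the total multiplicity of the points in $T$. It therefore suffices to choose the $a_i$ so that every integer in $[0,s-1]$ is a sum of at most $k-r$ of the $a_i$.
\item When $s\ge 2^{k-r}$, take $a_i=2^i$ for $i<k-r$ and $a_{k-r}=s-2^{k-r}+1\le 2^{k-r}$. Sums of the first $k-r$ values give $[0,2^{k-r}-1]$. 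Adding $a_{k-r}$ to proper subsets of them gives $a_{k-r}+[0,2^{k-r}-2]$, which reaches $s-1$. The two ranges overlap because $a_{k-r}\le 2^{k-r}$.
\item When $s<2^{k-r}$, I would use a truncated greedy binary choice. Take multiplicities $1,2,4,\dots$, capped so that they stay positive and sum to $s$ while still having $k-r+1$ terms; this works since $s\ge k-r+1$. Then verify the same covering property.
\end{itemize}
The only delicate bookkeeping is this small-$s$ case of the covering, and I expect it to be the main obstacle in (i).

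\emph{Part (ii).} Here $\mathcal F_{k-1}(\Pi)$ is the point set, and $\Char_{\mathcal M}(P)=m(P)$, or $0$ if $P\notin M$.
\begin{itemize}
\item Lower bound. Suppose $\ell_{k-1}(\mathcal M)=m$. If $M=\Pi$, then $m=\vartheta_k$, and the $\vartheta_k$ distinct positive multiplicities cost at least $\binom{\vartheta_k+1}{2}>\mu_k(\vartheta_k)$. Otherwise the value $0$ occurs, so there are $m-1$ distinct positive values, costing at least $1+\dots+(m-1)=\binom m2$. Spanning forces at least $k$ support points, so a further $\max\{0,k-m+1\}$ points of multiplicity at least $1$ are needed. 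Hence $|\mathcal M|\ge\mu_k(m)$.
\item Upper bound. For $2\le m\le\vartheta_k$, take $\max\{m-1,k\}$ points containing a basis, so the set is spanning. Give multiplicities $1,\dots,m-1$ to $m-1$ of them and $1$ to the rest. The support has size at most $\vartheta_k-1$, so $0$ is attained and $\ell_{k-1}=m$ at cost $\mu_k(m)$.
\item Since $\mu_k$ is nondecreasing in $m$, the formula for $L_{k-1}$ follows. Taking $m=\vartheta_k$ gives $\nu_{k-1}(k,q)=\mu_k(\vartheta_k)=\binom{\vartheta_k}2$, because $\vartheta_k\ge k+1$.
\item FCS criterion. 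A $(k-1)$-FCS $[n,k]$ system exists iff $\mu_k(m)\le n$ with $m=n-k+2$. If $m\le k+1$, this reads $\binom m2+k-m+1\le m+k-2$, i.e.\ $(m-2)(m-3)\le0$, so $m\in\{2,3\}$ and $n\in\{k,k+1\}$. If $m>k+1\ge 3$, it would need $\binom m2\le m+k-2<2m$, which is impossible for $m\ge5$. The remaining case $m=4$, $k=2$ also fails, since $6>4$.
\end{itemize}
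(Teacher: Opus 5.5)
\textbf{Part (i): correct, by a different route.} The paper gets (i) by citing the sufficiency half of Alderson's theorem (Theorem~\ref{thm:ordinary-FWS}) in rank $k-r+1$ and length $n-r+1$. It then lifts that system by a direct sum with $r-1$ independent points (Proposition~\ref{prop:lifting}). This is short, gives the general inequality $L_r(n,k,q)\ge L_1(n-r+1,k-r+1,q)$, and produces the common-flat configuration used in Proposition~\ref{prop:common-flat}.

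You instead build the system explicitly on a basis, which in effect reproves the needed half of Alderson's theorem inside your argument. Your flexibility lemma is correct: in $V/\langle T\rangle$ the kernel of the coordinate-sum functional has dimension $k-|T|-1\ge k-r-|T|$ because $r\ge1$, and it contains none of the remaining basis vectors. So everything reduces to subset sums, and you get a self-contained proof with an explicit system supported on $k$ independent points.

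The small-$s$ case you flag is not a real obstacle. If $s<2^{k-r}$, give one of the $k-r+1$ points multiplicity $1$ and apply the construction inductively to the other $k-r$ points with total $s-1$. This total lies in $[k-r,\,2^{k-r}-1]$, so by induction all subset sums of those $k-r$ multiplicities cover $[0,s-1]$. Each such subset avoids the extra point, hence uses at most $k-r$ points. Equivalently, it suffices that the sorted multiplicities satisfy $a_{(j)}\le 1+\sum_{i<j}a_{(i)}$: then every integer in $[0,s]$ is a subset sum, and any subset with sum $<s$ is automatically proper since all $a_i>0$.

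\textbf{Part (ii): the paper's route, but with a false step.} You follow the paper's approach: the cost lemma, the same extremal constructions, and the same quadratic analysis of $\mu_k(n-k+2)\le n$. However, your lower-bound case split asserts ``if $M=\Pi$, then $m=\vartheta_k$'', which is false. A multiplicity function positive on every point can take any number of distinct values between $1$ and $\vartheta_k$.

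As written, full-support systems with $2\le m<\vartheta_k$ distinct values are never treated. So the bound $|\mathcal M|\ge\mu_k(m)$ is not established, and neither is the upper bound on $L_{k-1}(n,k,q)$ or the necessity half of the FCS criterion, both of which rest on it.

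The repair is one line. If $0$ is not a value, the $m$ distinct positive values cost at least $\binom{m+1}{2}$, and the other $\vartheta_k-m$ points cost at least $1$ each. Hence $|\mathcal M|\ge\binom{m}{2}+\vartheta_k>\mu_k(m)$. The paper instead uses $\binom{m+1}{2}+\max\{0,k-m\}>\mu_k(m)$.

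With this fix, the rest of your (ii) is correct. The monotonicity of $\mu_k$ you invoke is true but not needed.
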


Part (i) is obtained by a direct-sum lifting of the ordinary full weight spectrum theorem. We conjecture that its numerical condition is also necessary, and we prove this under a natural common-flat hypothesis. Part (ii) reduces the point case to an exact integer optimization in which the cost of creating distinct positive multiplicities is separated from the additional cost of spanning the ambient projective space.
In the following, we develop global moment identities and local flat-incidence bounds, imposing strong structural constraints on large character spectra.

\begin{theorem}\label{thm:intro-moments}
Let \(1\le r\le k\), let \(\mathcal M\) be a projective \([N,k]\) system, and label its point occurrences by \(P_1,\ldots,P_N\). For \(J\subseteq[N]\), let \(\rho_{\mathcal M}(J)\) be the dimension of the span of nonzero representatives of the selected occurrences, with \(\rho_{\mathcal M}(\varnothing)=0\). Then, for every integer \(t\ge0\),
\begin{equation}\label{eq:binomial-moments}
 \sum_{\Lambda\in\mathcal F_r(\Pi)}\binom{\Char_{\mathcal M}(\Lambda)}{t} =
 \sum_{\substack{J\subseteq[N]\\ |J|=t}}\qbinom{k-\rho_{\mathcal M}(J)}{r}.
\end{equation}
If \(1\le r\le k-1\) and \(t\ge1\), then
\begin{equation}\label{eq:moment-image-bound}
 \binom{\ell_r(\mathcal M)}{t+1}\le\sum_{\substack{J\subseteq[N]\\ |J|=t}}
 \qbinom{k-\rho_{\mathcal M}(J)}{r}\le B_r\binom{N}{t}.
\end{equation}
\end{theorem}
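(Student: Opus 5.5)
The identity \eqref{eq:binomial-moments} is a double count of incidences between codimension-\(r\) flats and \(t\)-subsets of occurrences. For a flat \(\Lambda\), \(\binom{\Char_{\mathcal M}(\Lambda)}{t}\) is the number of \(t\)-subsets \(J\subseteq[N]\) with \(P_j\in\Lambda\) for all \(j\in J\). Swapping the order of summation gives
\[
\sum_{\Lambda}\binom{\Char_{\mathcal M}(\Lambda)}{t}=\sum_{|J|=t}\#\{\Lambda\in\mathcal F_r(\Pi):P_j\in\Lambda\ \forall j\in J\}.
\]
Write \(\Lambda=\PG(U)\) with \(\dim U=k-r\), and let \(W_J\) be the span of the representatives indexed by \(J\), so \(\dim W_J=\rho_{\mathcal M}(J)\). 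The condition on the right is exactly \(W_J\le U\). Passing to the quotient \(V/W_J\), of dimension \(k-\rho\), the \((k-r)\)-subspaces of \(V\) containing \(W_J\) correspond to the \((k-r-\rho)\)-subspaces of \(V/W_J\). These are counted by \(\qbinom{k-\rho}{k-r-\rho}=\qbinom{k-\rho}{r}\), with the usual convention that this is \(0\) when \(\rho>k-r\). The cases \(t=0\) (where \(J=\varnothing\) and \(\rho=0\)) and \(r=k\) (where \(\mathcal F_k=\{\varnothing\}\) and \(\qbinom{k-\rho}{k}\) equals \(1\) only if \(\rho=0\)) fit the same count, so they need only a brief check.

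For the upper bound in \eqref{eq:moment-image-bound}, take \(t\ge1\), so every \(J\) is nonempty and \(\rho_{\mathcal M}(J)\ge1\). Since \(\qbinom{m}{r}\) is nondecreasing in \(m\), each summand is at most \(\qbinom{k-1}{r}=B_r\). There are \(\binom{N}{t}\) subsets \(J\), which gives the bound \(B_r\binom{N}{t}\).

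For the lower bound, choose one flat \(\Lambda_v\) for each value \(v\in\Sigma_r(\mathcal M)\) with \(\Char_{\mathcal M}(\Lambda_v)=v\). All terms on the left of \eqref{eq:binomial-moments} are nonnegative, so
\[
\sum_{\Lambda}\binom{\Char(\Lambda)}{t}\ge\sum_{v\in\Sigma_r}\binom{v}{t}.
\]
The values in \(\Sigma_r\) are \(\ell=\ell_r(\mathcal M)\) distinct nonnegative integers. Listing them increasingly, the \(i\)-th value (starting from \(i=0\)) is at least \(i\). Because \(\binom{x}{t}\) is nondecreasing on the nonnegative integers, the sum is at least \(\sum_{i=0}^{\ell-1}\binom{i}{t}=\binom{\ell}{t+1}\) by the hockey-stick identity.

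The only delicate point is the convention \(\qbinom{m}{r}=0\) for \(m<r\) together with the degenerate cases \(t=0\) and \(r=k\). The restriction \(r\le k-1\) in \eqref{eq:moment-image-bound} is needed only so that \(B_r\) is meaningful; otherwise the argument is a routine double count.
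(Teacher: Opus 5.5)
Your proposal is correct and follows the paper's proof essentially step for step. You use the same double count of pairs \((\Lambda,J)\), with the quotient \(V/W_J\) giving the factor \(\qbinom{k-\rho_{\mathcal M}(J)}{r}\), the same bound \(\rho_{\mathcal M}(J)\ge1\) for the upper estimate, and the same one-flat-per-value argument with monotonicity of \(\binom{x}{t}\) and the hockey-stick identity for the lower estimate. One small correction to your closing remark: the restriction \(r\le k-1\) is not really needed. With the convention \(\qbinom{k-1}{k}=0\) and \(\ell_k(\mathcal M)=1\), the inequality also holds trivially for \(r=k\). This does not affect the argument.
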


The first moment controls its total incidence mass, and the second separates parallel pairs of occurrences from rank-two pairs. We complement these identities by a local invariant, the maximum character on a flat, which transfers restrictions on the represented matroid and on finite projective arcs into spectral upper bounds.
The final part separates the geometric problem of distinguishing flats from the arithmetic cost of assigning multiplicities.

\begin{definition}\label{def:separating-labeling}
Let \(1\le r\le k-1\). A set \(S\subseteq\Pi\) is \emph{\(r\)-separating} if the traces \(S\cap\Lambda\), \(\Lambda\in\mathcal F_r(\Pi)\), are pairwise distinct. For such a set, define
\[\tau_r(S)=\min\left\{\sum_{P\in S}a_P: a_P\in\Z_{>0},\quad
 \Lambda\longmapsto\sum_{P\in S\cap\Lambda}a_P \text{ is injective on }\mathcal F_r(\Pi)\right\}.
\]
\end{definition}

Every \(r\)-separating set in this range spans \(\Pi\), as proved in Proposition~\ref{prop:separating-spans}. The minimum defining \(\tau_r(S)\) is attained, since successive powers of two distinguish the distinct traces. The following theorem combines this cost with explicit choices of support to bound \(\nu_r(k,q)\).

\begin{theorem}\label{thm:intro-threshold}
Let \(1\le r\le k-1\).
\begin{enumerate}
\item[(i)] The minimum size of an \(r\)-MCS system satisfies
\[
\nu_r(k,q)=\min\{\tau_r(S):S\subseteq\Pi\text{ is \(r\)-separating}\}.
\]
\item[(ii)] We have
\begin{equation}\label{eq:threshold-summary}
\left\lceil\frac{H_r}{B_r}\right\rceil
\le\nu_r(k,q)
\le \binom{M_r-1}{2}+\frac{q^k-q^{k-r}}{q-1}
\le H_r.
\end{equation}
The last inequality is strict when \(r\le k-2\), and the upper bound is attained when \(r=k-1\). Consequently, \(\nu_r(k,q)=H_r\) if and only if \(r=k-1\).
\end{enumerate}
\end{theorem}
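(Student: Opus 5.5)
Part (i) is a two-way comparison. Given an \(r\)-MCS system \(\mathcal M\) of size \(\nu_r(k,q)\), let \(S\) be its support and \(a_P=m(P)\). Injectivity of \(\Lambda\mapsto\Char_{\mathcal M}(\Lambda)\) forces the traces \(S\cap\Lambda\) to be pairwise distinct, so \(S\) is \(r\)-separating and \(\tau_r(S)\le|\mathcal M|\). Conversely, let \(S\) be \(r\)-separating and let \((a_P)\) attain \(\tau_r(S)\). By Proposition~\ref{prop:separating-spans}, \(S\) spans \(\Pi\), so \((S,a)\) is a spanning multiset, hence a projective system. Its character is injective on \(\mathcal F_r(\Pi)\), so it is \(r\)-MCS of size \(\tau_r(S)\). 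This gives equality in (i).

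For the lower bound in (ii), apply \eqref{eq:moment-image-bound} from Theorem~\ref{thm:intro-moments} with \(t=1\) and \(N=\nu_r(k,q)\). An \(r\)-MCS system has \(\ell_r=M_r\), so \(\binom{M_r}{2}\le B_rN\). Integrality gives \(N\ge\lceil H_r/B_r\rceil\).

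For the upper bound I would follow the abstract. Fix one flat \(\Lambda_0\in\mathcal F_r(\Pi)\) and take \(S=\Pi\setminus\Lambda_0\), which has \(\vartheta_k-\vartheta_{k-r}=(q^k-q^{k-r})/(q-1)\) points. The trace of \(\Lambda_0\) is empty. For \(\Lambda\ne\Lambda_0\), the trace is \(\Lambda\setminus\Lambda_0\), which determines \(\Lambda\): since \(\Lambda\not\subseteq\Lambda_0\), the points of \(\Lambda\) outside \(\Lambda_0\) span \(\Lambda\). So \(S\) is \(r\)-separating. Next I need a positive labeling \(a_P\) whose \(M_r-1\) nonempty trace sums are distinct and positive, with \(\sum a_P\le\binom{M_r-1}{2}+|S|\). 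I would write \(a_P=1+b_P\) with \(b_P\ge0\). Then trace sums are \(|\Lambda\setminus\Lambda_0|+\sum b_P\), and the task is to build \(b\) greedily so that these \(M_r-1\) values become distinct while \(\sum b_P\le\binom{M_r-1}{2}\). The plan is an ordering argument: order the flats \(\Lambda_1,\dots,\Lambda_{M_r-1}\) and, for each \(\Lambda_i\), choose a point \(P_i\in\Lambda_i\setminus\Lambda_0\) not lying in the previous flats' traces beyond what is needed. Increasing \(b_{P_i}\) then bumps \(\Lambda_i\) past the earlier values, which costs at most \(i-1\). Summing over \(i\) gives \(\sum_{i}(i-1)=\binom{M_r-1}{2}\). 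I expect this step to be the main obstacle. Raising a label also raises every other flat through that point, so the argument needs a "positive additive labeling" lemma. I would first try induction on a linear extension in which each new increment is placed on a point private to the current flat relative to the already-fixed ones. Failing that, I would use a potential argument that counts collisions.

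Finally, the comparison \(\binom{M_r-1}{2}+|S|\le\binom{M_r}{2}\) is equivalent to \(|S|\le M_r-1\). This says that the number of points off a codimension-\(r\) flat is at most the number of codimension-\(r\) flats minus one. For \(r=k-1\) both sides equal \(\vartheta_k-1\), since \(\Lambda_0\) is a point and \(M_{k-1}=\vartheta_k\). For \(r\le k-2\), a Gaussian-binomial estimate gives \(M_r>\vartheta_k-\vartheta_{k-r}+1\), because \(M_r\ge\qbinom{k}{2}\), which exceeds \(\vartheta_k\) once \(k\ge3\). Attainment at \(r=k-1\) follows from Theorem~\ref{thm:intro-fixed-size}(ii), which gives \(\nu_{k-1}=\binom{\vartheta_k}{2}=H_{k-1}\). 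Thus \(\nu_r=H_r\) if and only if \(r=k-1\).
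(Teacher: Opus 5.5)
Your part (i), the first-moment lower bound, the proof that $S_0=\Pi\setminus\Lambda_0$ is $r$-separating with $M_r-1$ nonempty traces, and the reduction of the final comparison to $|S_0|\le M_r-1$ all agree with the paper. The genuine gap is the step you flag as the main obstacle. You never show that there are positive labels on $S_0$ with distinct trace sums and total at most $|S_0|+\binom{M_r-1}{2}$, and the route you sketch fails. A point of $\Lambda_i\setminus\Lambda_0$ lying off $\Lambda_1,\ldots,\Lambda_{i-1}$ need not exist. When $r\le k-2$, each point outside $\Lambda_0$ lies on $B_r=\qbinom{k-1}{r}\ge q+1$ members of $\mathcal F_r(\Pi)$, none equal to $\Lambda_0$. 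So the last flat in any ordering of $\mathcal F_r(\Pi)\setminus\{\Lambda_0\}$ has no private point. Once increments go on shared points, raising $b_P$ also moves every earlier flat through $P$. Your bound of $i-1$ per flat is therefore unsupported, and the fallback potential argument is not specified.

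The paper fills this step with the Bhattacharya--Das--Ganguly lemma (Lemma~\ref{BDG}), applied to the trace hypergraph with initial labeling $\kappa\equiv1$. Every nonempty trace has a unique last point, so the savings term equals $h$ and the cost is $|S|+\binom h2$ (Theorem~\ref{thm:positive-labeling}). The idea you are missing is to order the points rather than the flats, and to charge pairs of traces rather than flats. Fix an order $S_0=\{P_1,\ldots,P_u\}$. For distinct traces $T\ne T'$, let $P_\delta$ be the last point of $T\triangle T'$. Every later point lies in both traces or in neither, so the difference of their sums is frozen after step $\delta$. Hence at step $i$ it suffices to choose $b_{P_i}\ge0$ avoiding one value for each pair with $\delta=i$. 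The greedy choice then gives $\sum_P b_P\le\binom{M_r-1}{2}$. Nonempty traces have positive sums because $a_P\ge1$, and $\Lambda_0$ is the only trace with sum $0$.

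Separately, your strictness argument is wrong for $r=1$: $M_1=\vartheta_k$ is smaller than $\qbinom{k}{2}$ for $k\ge4$ and equal to it for $k=3$. Use instead $M_r\ge\vartheta_k$, which holds for all $1\le r\le k-1$, together with $\vartheta_{k-r}\ge q+1>1$ when $r\le k-2$.
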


The lower bound in \eqref{eq:threshold-summary} follows from the first moment and is at least \(M_r+r-1\), as shown in \eqref{eq:lower-bound-comparison}. The second moment can exclude this first-moment endpoint. 
For the upper bounds, Theorem~\ref{thm:positive-labeling} applies the initial-label edge-discrimination lemma of Bhattacharya, Das, and Ganguly \cite[Lemma~1]{BDG2014}. It gives positive labels of total cost at most \(|S|+\binom h2\), where \(h\) is the number of nonempty traces on a separating set \(S\). On the full point set, subtracting one from every label preserves distinct characters and gives \(\nu_r(k,q)\le H_r\). To improve this bound, we take \(S\) to be the complement of one codimension-\(r\) flat. This set has \((q^k-q^{k-r})/(q-1)\) points and exactly \(M_r-1\) nonempty traces, which yields the sharper estimate in \eqref{eq:threshold-summary}. Thus the auxiliary labeling construction leads directly to bounds for \(\nu_r(k,q)\). For \(r=1\), an \(r\)-MCS system is the projective counterpart of a maximum weight spectrum code \cite{SZSC,AN2019}, and the upper bound in \eqref{eq:threshold-summary} reads \(\nu_1(k,q)\le\binom{\vartheta_k-1}{2}+q^{k-1}\). This quantity is of order \(q^{2k-2}\) and improves the known upper bounds on the minimum length of maximum weight spectrum codes; see Corollary~\ref{cor:mws-length} and Remark~\ref{rem:mws-comparison}.

The paper is organized as follows. 
In Section~\ref{sec:preliminaries}, we record the projective-system and coding-theoretic background and the separating-set property needed later.
In Section~\ref{sec:fixed-length}, we prove Theorem~\ref{thm:intro-fixed-size} and develop the moment identities in Theorem~\ref{thm:intro-moments} and the local incidence bounds. In Section~\ref{sec:mcs-threshold}, we study the \(r\)-MCS threshold, with lower bounds and endpoint obstructions in Subsection~\ref{subsec:threshold-lower} and the labeling constructions proving Theorem~\ref{thm:intro-threshold} in Subsection~\ref{subsec:threshold-upper}. Finally, in Section~\ref{sec:open}, we conclude the paper and present several open problems.

\section{Preliminaries}\label{sec:preliminaries}
Throughout the paper, \(q\) is a prime power, \(V=\F_q^k\), and \(\Pi=\PG(V)=\PG(k-1,q)\). All vector-space dimensions are over \(\F_q\). For integers \(a\ge0\), ordinary binomial coefficients \(\binom ab\) are zero when \(b<0\) or \(b>a\). 
We retain the notation introduced in the Introduction.

\subsection{Projective multisets and character spectra}\label{subsec:projective-systems}

A projective space of dimension \(k-1\) over the finite field \(\mathbb{F}_q\) is defined as
\[\operatorname{PG}(k-1,q):=\bigl(\mathbb{F}_q^k\setminus\{0\}\bigr)/\sim,\]
where \(u\sim v\) if \(u=\lambda v\) for some nonzero
\(\lambda\in\mathbb{F}_q\). The equivalence class that a nonzero vector \(v\) belongs to is denoted by \([v]\). The \(1\)-dimensional, \(2\)-dimensional, \(\ldots\), \((k-1)\)-dimensional vector subspaces of \(\mathbb{F}_q^k\) correspond to the points, lines, \(\ldots\),
hyperplanes of \(\Pi\).
In fact, we will often identify the points of \(\Pi\) with the \(1\)-dimensional vector subspaces of \(\mathbb{F}_q^k\). Note that while the dimension of a projective subspace is one less than the dimension of its corresponding vector subspace, the codimensions remain the same.
For a subset \(S\) of points in \(\Pi\), the subspace formed by taking the linear span of \(S\) is denoted by \(\langle S\rangle\). 

If \(U\le \F_q^k\) has vector dimension \(s\ge1\), then \(\PG(U)=\{P\in\Pi:P\le U\}\), which is naturally isomorphic to \(\PG(s-1,q)\). We set \(\PG(\{0\})=\varnothing\). For \(0\le s\le k\), let \(\Gr_q(s,k)\) be the Grassmannian of \(s\)-dimensional subspaces of \(V\). Its cardinality is the Gaussian coefficient
\[\qbinom{k}{s}=\frac{(q^k-1)(q^{k-1}-1)\cdots(q^{k-s+1}-1)}{(q^s-1)(q^{s-1}-1)\cdots(q-1)},\]
with the usual convention that the empty product is \(1\). For integer parameters, we put \(\qbinom ab=0\) when \(a<0\), \(b<0\), or \(b>a\). With this notation, we identify \(\mathcal F_r(\Pi)\) with \(\{\PG(U):U\in\Gr_q(k-r,k)\}\), where \(\mathcal F_k(\Pi)=\{\varnothing\}\). For \(j\ge0\), write
\(\theta_q(j)=\frac{q^{j+1}-1}{q-1},\) and set \(\theta_q(-1)=0\). In particular,
\(\vartheta_k=\theta_q(k-1)=\frac{q^k-1}{q-1}\) is the number of points of \(\Pi\). We refer to \cite{Casse2011} for further details on finite projective spaces.

Let \(\mathcal M=(M,m)\) be a finite multiset of points in \(\Pi\), where \(m(P)\) denotes the multiplicity of \(P\in M\). We extend \(m\) to all points of \(\Pi\) by setting \(m(P)=0\) for \(P\notin M\). Its total multiplicity and character function are
\[|\mathcal M|=\sum_{P\in\Pi}m(P),\quad
\Char_{\mathcal M}(A)=\sum_{P\in A}m(P)\quad(A\subseteq\Pi).\]
The support of \(\mathcal M\) is \(\supp(\mathcal M)=\{P\in\Pi:m(P)>0\}=M\). We call \(\mathcal M\) a \emph{projective system} if \(M\) spans \(\Pi\), and a \emph{simple projective system} if
\(m(P)\le1\) for every \(P\in\Pi\). 
Two projective systems \((M,m)\) and \((M',m')\) are \emph{projectively equivalent} if a projectivity induced by an invertible linear map on \(V\) sends one to the other and preserves multiplicities. Equivalently, there exists \(\varphi\in\operatorname{PGL}(k,q)\) such that \(m'(\varphi(P))=m(P)\) for every \(P\in\Pi\), after extending both multiplicity functions by zero. If \(\Lambda\in\mathcal F_r(\Pi)\), then its complementary character is
\[|\mathcal M|-\Char_{\mathcal M}(\Lambda)=\sum_{P\in\Pi\setminus\Lambda}m(P).\]
The sets \(\Sigma_r(\mathcal M)\), \(\Omega_r(\mathcal M)\), and the quantity \(\ell_r(\mathcal M)\) are as defined in the Introduction. We shall also use \(d_r(\mathcal M)=\min\Omega_r(\mathcal M)\). To distinguish the spectrum from the full distribution, put
\(A_{r,j}(\mathcal M)=|\{\Lambda\in\mathcal F_r(\Pi):
\Char_{\mathcal M}(\Lambda)=j\}|\) for \(j\ge0\).
Thus \(\Sigma_r(\mathcal M)=\{j:A_{r,j}(\mathcal M)>0\}\), whereas \(A_{r,j}\) records how often each character occurs.
For \(r=1\), this recovers the usual minimum-distance parameter of a projective system: a projective \([n,k,d]_q\) system is precisely a spanning projective multiset \(\mathcal M\) of size \(n\) with \(d=d_1(\mathcal M)\).
Besides the abbreviations \(M_r\) and \(B_r\) introduced in the Introduction, we put
\(B_r^{(2)}=\qbinom{k-2}{r}.\)
Thus \(B_r\) is the number of members of \(\mathcal F_r(\Pi)\) through a fixed point, whereas \(B_r^{(2)}\) is the number through a fixed projective line.

\subsection{The coding-theoretic correspondence}\label{subsec:coding-correspondence}

The formulation of our extremal problem is projective-geometric.
For completeness, we record here its exact correspondence with support weights of linear codes.
Let \(\M\) be a projective \([n,k]\) system, and list its point occurrences as \(P_1,\ldots,P_n\), where repeated points are listed according to their multiplicities. Choose a nonzero representative \(d_i\in P_i\) for each occurrence. Then the matrix
\[
G_{\mathcal M}=(d_1\mid\cdots\mid d_n)\in\F_q^{k\times n}
\]
has row rank \(k\) and is called a \emph{matrix realization} of \(\mathcal M\). Conversely, every \(k\times n\) matrix of row rank \(k\) with no zero column determines a projective system by projectivizing its columns. Permuting the columns or multiplying individual columns by nonzero field elements does not change the projective system.

Write \([n]=\{1,\ldots,n\}\). For a labeled set \(J\subseteq[n]\), define
\[\rho_{\mathcal M}(J)=\dim_{\F_q}\operatorname{span}_{\F_q}\{d_j:j\in J\},
\quad\rho_{\mathcal M}(\varnothing)=0.\]
The rank is independent of the chosen representatives. The rank function on the ground set \([n]\) defines the matroid represented by \(G_{\mathcal M}\): a set \(J\) is independent if \(\rho_{\mathcal M}(J)=|J|\), and a \emph{circuit} is a nonempty inclusion-minimal dependent set. Its \emph{girth} \(\gamma(\mathcal M)\) is the smallest circuit size, with girth \(\infty\) if no circuit exists. The matroid depends only on the labeled occurrences; a column permutation merely relabels its ground set. There are no loops, that is, circuits of size one, because all columns are nonzero. Two occurrences form a parallel pair precisely when they are supported at the same projective point. A matroid is \(q\)-representable if its rank function arises from a matrix over \(\F_q\). It is uniform of rank \(k\) if its independent sets are exactly the subsets of size at most \(k\).

We now isolate the coding-theoretic interpretation of the geometric model, extending the standard projective-system dictionary \cite{Greene1976,ABN2022}. Let \(G_{\mathcal M}\) be a matrix realization, and let \(C_{\mathcal M}\le\F_q^n\) be its row space. For \(x\in\F_q^n\), put \(\supp(x)=\{i:x_i\ne0\}\). If \(D\le C_{\mathcal M}\), define
\[
\supp(D)=\bigcup_{x\in D}\supp(x),
\quad
\wt(D)=|\supp(D)|.
\]
The integer \(\wt(D)\) is the \emph{support weight} of \(D\). Orthogonal complements in \(V=\F_q^k\) are taken with respect to the standard product.

The following standard correspondence is the higher-dimensional version of the usual projective-system description of Hamming weights; see, for example, \cite{Wei1991,Klove1992,Britz2007}.

\begin{proposition}\cite{Wei1991,Klove1992,Britz2007}\label{prop:char-r}
Let \(1\le r\le k\), let \(\mathcal M\) be a projective \([n,k]\) system, and let \(W\in\Gr_q(r,k)\). Put \(D_W=\{xG_{\mathcal M}:x\in W\}\). Then \(\dim D_W=r\) and
\[\wt(D_W)=n-\Char_{\mathcal M}\bigl(\PG(W^\perp)\bigr).\]
Consequently,
\[\begin{aligned}
\Omega_r(\mathcal M)
 &=\{\wt(D):D\le C_{\mathcal M},\ \dim D=r\},\\
\Sigma_r(\mathcal M)
 &=\{n-\wt(D):D\le C_{\mathcal M},\ \dim D=r\}.
\end{aligned}\]
This is the standard higher-support form of the projective-system correspondence.
\end{proposition}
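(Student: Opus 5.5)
The plan is to compute the support of the subcode \(D_W\) coordinate by coordinate, via the duality between \(W\) and \(W^\perp\).

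First, the dimension. The map \(x\mapsto xG_{\mathcal M}\) from \(V\) to \(\F_q^n\) is injective, because \(G_{\mathcal M}\) has row rank \(k\): if \(xG_{\mathcal M}=0\), then \(x\) is a vanishing linear combination of the \(k\) independent rows, so \(x=0\). Hence \(D_W\) is the image of \(W\) under an injective linear map, and \(\dim D_W=\dim W=r\).

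Next, the support. For a coordinate \(i\), the \(i\)-th entry of \(xG_{\mathcal M}\) is the standard product \(x\cdot d_i\). So \(i\notin\supp(D_W)\) if and only if \(x\cdot d_i=0\) for all \(x\in W\). This says exactly that \(d_i\in W^\perp\), which is equivalent to \(P_i\in\PG(W^\perp)\). The complement of \(\supp(D_W)\) is therefore the set of occurrences lying on \(\PG(W^\perp)\), and counting occurrences with multiplicity gives \(\Char_{\mathcal M}(\PG(W^\perp))\). This yields \(\wt(D_W)=n-\Char_{\mathcal M}(\PG(W^\perp))\). The case \(r=k\) fits the same argument: then \(W^\perp=\{0\}\), the flat is empty, and \(\wt(D_W)=n\).

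For the spectra, I use two bijections.
\begin{itemize}
\item Since \(x\mapsto xG_{\mathcal M}\) is an isomorphism \(V\to C_{\mathcal M}\), every \(r\)-dimensional subcode \(D\le C_{\mathcal M}\) equals \(D_W\) for a unique \(W\in\Gr_q(r,k)\).
\item Orthogonal complementation is a bijection \(\Gr_q(r,k)\to\Gr_q(k-r,k)\), because the standard form is nondegenerate, so \(\dim W^\perp=k-r\) and \((W^\perp)^\perp=W\). Under the identification of \(\mathcal F_r(\Pi)\) with \(\{\PG(U):U\in\Gr_q(k-r,k)\}\), including the convention \(\mathcal F_k(\Pi)=\{\varnothing\}\), the flats \(\PG(W^\perp)\) run over all of \(\mathcal F_r(\Pi)\).
\end{itemize}
Substituting into the weight formula gives \(\Omega_r(\mathcal M)=\{\wt(D):D\le C_{\mathcal M},\ \dim D=r\}\). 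Applying \(j\mapsto n-j\) then gives the stated description of \(\Sigma_r(\mathcal M)\).

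The argument is routine. The only point needing care is nondegeneracy of the standard product over \(\F_q\), which guarantees \(\dim W^\perp=k-r\) even when \(W\cap W^\perp\neq\{0\}\); this is all the bijection step requires. Independence of the chosen representatives is clear, since rescaling \(d_i\) does not change whether \(x\cdot d_i=0\).
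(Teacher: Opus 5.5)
Your argument is correct and complete. Injectivity of \(x\mapsto xG_{\mathcal M}\) gives \(\dim D_W=r\). The identity \((xG_{\mathcal M})_i=x\cdot d_i\) identifies the complement of \(\supp(D_W)\) with the occurrences on \(\PG(W^\perp)\). The bijections \(W\mapsto D_W\) and \(W\mapsto W^\perp\) then give the two spectral identities.

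The paper gives no proof of this proposition; it quotes it as the standard correspondence from the cited literature. Your proof is that standard argument, and it correctly handles the case \(r=k\) and a possibly nonzero \(W\cap W^\perp\).
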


The preceding correspondence applies to nondegenerate codes. For a general \([n,k]_q\) linear code \(C\), let \(z\) be the number of coordinate positions that vanish identically on \(C\), and puncture these positions. The resulting nondegenerate code \(C^\circ\) has length \(n_\circ=n-z\), dimension \(k\), and the same support weight on every corresponding subcode. Conversely, a code associated with a projective system of size \(n_\circ\le n\) can be extended to length \(n\) by adjoining \(n-n_\circ\) identically zero coordinates.  Therefore
\[L_r(n,k,q)=\max\left\{\left|\{\wt(D):D\le C,\ \dim D=r\}\right|:
C\le\F_q^n,\ \dim C=k\right\}.\]
Thus the total-multiplicity budget in the geometric formulation is exactly equivalent to fixing the length in the coding-theoretic formulation. For the nondegenerate code \(C_{\mathcal M}\), the quantity \(d_r(\mathcal M)\) is its \(r\)th generalized Hamming weight, and the \(r\)-MCS and \(r\)-FCS conditions correspond to the maximum and full support-weight spectrum conditions, respectively. Adjoining identically zero coordinates preserves every subcode support weight and the maximum spectrum property. It does not preserve the full spectrum property relative to the enlarged length, since the new largest weight cannot occur.
The following result is the projective reformulation of the ordinary full weight spectrum theorem of Alderson \cite{A2019}.

\begin{theorem}\cite{A2019}\label{thm:ordinary-FWS}
Let \(k\le n\). A \(1\)-FCS projective \([n,k]\) system exists if and only if \(n<2^k\).
\end{theorem}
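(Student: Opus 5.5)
We would prove the two directions separately, working throughout with the projective formulation: by Proposition~\ref{prop:char-r} with \(r=1\), a projective \([n,k]\) system \(\mathcal M\) is \(1\)-FCS exactly when the complementary characters \(n-\Char_{\mathcal M}(H)\), over hyperplanes \(H\), take every value in \(\{1,\ldots,n\}\).

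\emph{Sufficiency (\(k\le n<2^k\)).} We give a construction on a frame. Let \(P_i=[e_i]\) for \(i=1,\ldots,k\) be the coordinate points, and give \(P_i\) a positive multiplicity \(a_i\) subject to two conditions: \(\sum_i a_i=n\), and \(a_i\le 1+\sum_{j<i}a_j\) for every \(i\).
\begin{itemize}
\item Such a choice exists precisely when \(k\le n\le 2^k-1\). Start with \(a_i=1\) for all \(i\), then raise the labels greedily from the top, raising \(a_k\), then \(a_{k-1}\), and so on, while keeping each inequality valid. The largest attainable total is \(1+2+\cdots+2^{k-1}=2^k-1\), and every intermediate total is reachable.
\item The support is \(\{P_1,\ldots,P_k\}\), which spans \(\Pi\).
\item For \(T\subseteq[k]\), pick \(x\in V\) with support exactly \(T\), which is possible since \(q\ge2\). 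Then the hyperplane \(H=\PG(x^\perp)\) misses exactly the points \(P_i\) with \(i\in T\), so \(n-\Char_{\mathcal M}(H)=\sum_{i\in T}a_i\).
\item The inequality condition is the standard criterion guaranteeing that the subset sums of \((a_i)\) cover every integer in \([0,n]\).
\end{itemize}
Hence every value \(1,\ldots,n\) occurs, and \(\mathcal M\) is \(1\)-FCS.

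\emph{Necessity.} We argue by induction on \(k\) with a weakened hypothesis that survives restriction. Call a multiset \(\mathcal M\) of size \(n\) spanning \(\PG(k-1,q)\) \emph{gap-free} if the set \(\Sigma\) of hyperplane characters satisfies \(\Sigma\cup(\Sigma+1)\supseteq\{0,1,\ldots,n-1\}\), that is, \(\Sigma\) contains no two consecutive missing values in that range. Every \(1\)-FCS system has \(\Sigma=\{0,\ldots,n-1\}\) and is therefore gap-free. The goal is an inequality of the form \(n+1\le 2(n'+1)\), where \(n'\) is the size of a restricted system of rank \(k-1\). Together with the base case \(k=1\), where \(n=1\), this yields \(n+1\le 2^k\).

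The inductive step runs as follows. Since the value \(1\) occurs in \(\Omega_1(\mathcal M)\), there is a hyperplane \(H\) containing all occurrences but one point \(P\) of multiplicity \(1\). The occurrences in \(H\) span \(H\), so they form a system \(\mathcal M_H\) of size \(n-1\) in \(H\cong\PG(k-2,q)\). For any other hyperplane \(H'\ne H\) we have
\[
\Char_{\mathcal M}(H')=\Char_{\mathcal M_H}(H\cap H')+[P\in H'],
\]
and every hyperplane of \(H\) arises as \(H\cap H'\) both with \(P\in H'\) and with \(P\notin H'\). Consequently, the character set of \(\mathcal M\) is \(\Sigma'\cup(\Sigma'+1)\cup\{n-1\}\), where \(\Sigma'\) is the character set of \(\mathcal M_H\). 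Feeding the covering condition through this identity shows that \(\Sigma'\) can miss values only in a controlled way.

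The main obstacle is choosing the inductive invariant correctly. Plain gap-freeness is not obviously inherited: after one restriction, \(\Sigma'\) is only forced to avoid runs of length three, and the allowed run length grows at each step. What we would try first is to track the allowed gap length explicitly. The hypothesis at stage \(j\) would be that \(\Sigma\) contains no \(2^j\) consecutive missing values, alongside the requirement that small values occur. We would then show that each restriction at most doubles the gap parameter. When the chain reaches rank \(1\), where \(\Sigma=\{0\}\), this forces \(n<2^k\).

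If that bookkeeping becomes unwieldy, the fallback is Alderson's original coding argument in \cite{A2019}, transported through Proposition~\ref{prop:char-r}.
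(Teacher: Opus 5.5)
The paper gives no proof of this statement; it imports it from Alderson \cite{A2019}, so your fallback is exactly what the paper does.

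Your sufficiency construction on a frame is correct, apart from a slip in how the labels are chosen. A single top-down greedy pass stops at total $\binom{k+1}{2}$, which is below $2^k-1$ for $k\ge3$. Instead, raise $a_2,a_3,\ldots,a_k$ in that order, one unit at a time, up to $2^{i-1}$. This keeps every constraint valid and passes through every total from $k$ to $2^k-1$.

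The necessity direction, however, is not yet an argument. The one step you carry out uses the fact that a $1$-FCS system has a hyperplane missing a single occurrence of multiplicity one. That is available only at the first stage. Once the spectrum merely has bounded gaps, the best hyperplane $H$ may miss $c\ge2$ occurrences at several points. Then the identity $\Sigma=\Sigma'\cup(\Sigma'+1)\cup\{n-1\}$ fails, and the restriction $\mathcal M_H$ need not span $H$. The announced doubling of the gap parameter is exactly what remains unproved. Your stated target $n+1\le 2(n'+1)$ is also not the right mechanism: it is vacuous for $n'=n-1$. What is needed is a bound on the restricted size under the weakened hypothesis, with the losses $c$ accumulated over the stages.

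The missing pieces are these.
\begin{enumerate}
\item State the invariant for arbitrary, not necessarily spanning, multisets $\mathcal N$ of size $n$ in $\PG(m-1,q)$, and include the top end. With $\hat\Sigma=\Sigma\cup\{n\}$, require every run of consecutive integers in $\{0,\ldots,n\}\setminus\hat\Sigma$ to have length at most $g$.
\item Restrict to a hyperplane $H$ of maximum character $n-c$. The missing run $\{n-c+1,\ldots,n-1\}$ gives $c\le g+1$. This replaces your vague requirement that small values occur.
\item For a hyperplane $G$ of $H$, the hyperplanes $H'\ne H$ through $G$ partition the $c$ outside occurrences. So one only gets $\hat\Sigma(\mathcal N)\subseteq\hat\Sigma(\mathcal N_H)+\{0,1,\ldots,c\}$.
\item A run of $L>c$ values missing from $\hat\Sigma(\mathcal N_H)$ therefore yields at least $L-c$ consecutive values missing from $\hat\Sigma(\mathcal N)$. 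Hence $L\le g+c\le 2g+1$.
\item The size bookkeeping is additive: $n=c+|\mathcal N_H|$. Induct on $m$ from $m=1$, where $\hat\Sigma=\{0,n\}$ forces $n\le g+1$. This gives $n\le (g+1)(2^m-1)$, and taking $g=0$, $m=k$ yields $n\le 2^k-1$.
\end{enumerate}
With these ingredients your route becomes a complete, $q$-independent proof of necessity. As written, it is only a plan.
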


\subsection{Separating sets}\label{subsec:cutting-blocking}
Recall Definition~\ref{def:separating-labeling}. For \(r=1\), the condition that all hyperplanes have distinct traces on a point set is the point-query version of dual non-adaptive search discussed by H\'eger, Patk\'os, and Tak\'ats \cite[Section~4]{HPT2015}. For arbitrary codimension, the following observation ensures that a separating support satisfies the rank requirement for a projective system.

\begin{proposition}\label{prop:separating-spans}
Let \(1\le r\le k-1\). Every \(r\)-separating set in \(\Pi\) spans \(\Pi\).
\end{proposition}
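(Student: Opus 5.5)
We argue by contraposition. Suppose \(S\subseteq\Pi\) does not span \(\Pi\), and put \(W=\langle S\rangle\). Then \(W\) is a proper subspace of \(V\), so \(\dim W\le k-1\), and we may enlarge it to a hyperplane \(H\le V\) with \(S\subseteq\PG(H)\). The trace of a codimension-\(r\) flat on \(S\) then depends only on its intersection with \(\PG(H)\). So the goal is to exhibit two distinct flats \(\Lambda_1\ne\Lambda_2\) in \(\mathcal F_r(\Pi)\) with \(\Lambda_1\cap\PG(H)=\Lambda_2\cap\PG(H)\). This gives \(S\cap\Lambda_1=S\cap\Lambda_2\), contradicting the assumption that \(S\) is \(r\)-separating.

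To construct the two flats, we choose a subspace \(X\le H\) with \(\dim X=k-r-1\). This is possible because \(r\le k-1\) gives \(k-r-1\ge0\), and \(\dim H=k-1\ge k-r-1\). Next we pick two distinct vectors \(v_1,v_2\notin H\) such that \(\langle X,v_1\rangle\ne\langle X,v_2\rangle\), and set \(U_i=X+\langle v_i\rangle\). Each \(U_i\) has dimension \(k-r\), since \(v_i\notin H\supseteq X\). Moreover \(U_i\cap H=X\), because \(U_i\not\le H\) forces \(\dim(U_i\cap H)=k-r-1\) and \(X\le U_i\cap H\). Hence \(\PG(U_1)\cap\PG(H)=\PG(X)=\PG(U_2)\cap\PG(H)\).

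It remains to check that \(v_1,v_2\) can be chosen so that \(U_1\ne U_2\). The \((k-r)\)-subspaces containing \(X\) correspond to the points of \(V/X\), which has dimension \(r+1\ge2\). Those not contained in \(H\) correspond to the points of \(\PG(V/X)\) outside the hyperplane \(\PG(H/X)\). There are \(\theta_q(r)-\theta_q(r-1)=q^r\ge q\ge2\) such points, so two distinct choices exist.

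The only point needing care is this counting. It is exactly where the hypothesis \(r\le k-1\) enters: when \(r=k\) the sole flat is \(\varnothing\) and the statement is vacuous or false. The degenerate case \(X=\{0\}\) (when \(r=k-1\)) is covered by the same argument, where the flats are points outside \(\PG(H)\), each with empty trace on \(S\).
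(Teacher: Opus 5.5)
Your proof is correct and follows essentially the same route as the paper: put \(S\) inside a hyperplane, fix a \((k-r-1)\)-dimensional subspace of that hyperplane, and extend it by two vectors outside the hyperplane. This gives two distinct codimension-\(r\) flats with the same intersection with the hyperplane, hence the same trace on \(S\). The only cosmetic difference is how \(U_1\ne U_2\) is secured. The paper does it explicitly with \(u\) and \(u+v\), where \(v\in W\setminus I\). You instead count the \(q^r\ge2\) points of \(\PG(V/X)\) off \(\PG(H/X)\). That count really uses \(r\ge1\); the hypothesis \(r\le k-1\) enters only in the choice of \(X\), not in the counting as your last paragraph says.
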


\begin{proof}
Put \(s=k-r\), and suppose that an \(r\)-separating set \(S\) is contained in \(\PG(W)\) for some vector hyperplane \(W<V\). Choose \(I\le W\) with \(\dim I=s-1\), a vector \(u\notin W\), and a vector \(v\in W\setminus I\). These choices are possible because \(1\le s\le k-1\). The subspaces \(U_1=I+\langle u\rangle\) and \(U_2=I+\langle u+v\rangle\) have dimension \(s\) and satisfy \(U_1\cap W=U_2\cap W=I\). They are distinct, since equality would imply \(v\in U_1\cap W=I\). Consequently, \(S\cap\PG(U_1)=S\cap\PG(I)=S\cap\PG(U_2),\) contradicting the separating property. Here \(\PG(I)=\varnothing\) is allowed when \(s=1\). Thus \(S\) spans \(\Pi\).
\end{proof}

\section{The size-constrained extremal function}\label{sec:fixed-length}
We study \(L_r(n,k,q)\) under the total-multiplicity budget \(n\). After recording its basic properties, we prove the two parts of Theorem~\ref{thm:intro-fixed-size} in sequence. Then we study the attainment of the interval bound \(n-r+1\), or equivalently the existence of \(r\)-FCS systems, in Subsection~\ref{subsec:full-spectrum}.
In Subsection~\ref{subsec:codim-one}, we determine the extremal function exactly in the point case \(r=k-1\).
Finally, in Subsection~\ref{subsec:global-local}, we establish the moment identities and local incidence bounds that restrict general spectra and supply the lower-bound tools for the \(r\)-MCS threshold in Section~\ref{sec:mcs-threshold}.

\begin{proposition}\label{prop:fixed-framework}
Let \(1\le r\le k\le n\). Then \(L_r(n,k,q)\) is nondecreasing in \(n\), and
\[k-r+1\le L_r(n,k,q)\le\min\{M_r,n-r+1\}.\]
In particular, \(L_r(k,k,q)=k-r+1\).
\end{proposition}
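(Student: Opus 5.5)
Monotonicity is immediate from the definition: for \(n\le n'\), every spanning multiset with \(|\mathcal M|\le n\) also satisfies \(|\mathcal M|\le n'\). Hence the maximum defining \(L_r(n',k,q)\) is taken over a larger family, and \(L_r(n,k,q)\le L_r(n',k,q)\). The upper bound \(L_r(n,k,q)\le\min\{M_r,n-r+1\}\) is exactly \eqref{eq:intro-two-bounds}, which the Introduction derives from \(|\mathcal F_r(\Pi)|=M_r\) and \(\Omega_r(\mathcal M)\subseteq\{r,\ldots,|\mathcal M|\}\). So only the lower bound \(k-r+1\le L_r(n,k,q)\) needs work.

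For the lower bound, I would use monotonicity to reduce to exhibiting one spanning system of size \(k\) with \(\ell_r=k-r+1\). Take the simple system \(\mathcal M\) consisting of the \(k\) coordinate points \(E_i=[e_i]\), \(i\in[k]\), each with multiplicity one. For \(j=0,1,\ldots,k-r\), let \(U_j=\langle e_1,\ldots,e_j,\,e_{k-r+j+1}-e_{k-r+j}\ \text{(or similar)}\rangle\). More simply, I want an \((k-r)\)-dimensional subspace containing exactly \(j\) of the coordinate points. A concrete choice is
\[U_j=\langle e_1,\ldots,e_j\rangle+\langle w_{j+1},\ldots,w_{k-r}\rangle,\]
where the vectors \(w_i\) have supports that are disjoint from each other, disjoint from \(\{1,\ldots,j\}\), and each of size at least \(2\). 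One convenient option is \(w_i=e_i+e_{k-r+i}\) for \(j<i\le k-r\). These indices exist because \(k-r+i\le k\), and the supports \(\{i,k-r+i\}\) are pairwise disjoint and avoid \([j]\) when \(i>j\).

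The key check is that \(U_j\) contains no coordinate point other than \(E_1,\ldots,E_j\). A vector of \(U_j\) has the form \(\sum_{i\le j}a_ie_i+\sum_{i>j}b_i(e_i+e_{k-r+i})\). If \(k-r+i\le j\) could happen, the supports would overlap and the argument would need care. Since \(i>j\), however, \(k-r+i>j\) whenever \(r<k\); the case \(r=k\) is trivial, because \(\mathcal F_k=\{\varnothing\}\) gives \(\ell_k=1=k-r+1\). The positions \(i\) and \(k-r+i\) for \(i>j\) are therefore outside \([j]\) and distinct across \(i\). If this vector equals a multiple of \(e_t\) with \(t>j\), then every \(b_i\) must vanish, because each nonzero \(b_i\) contributes two nonzero coordinates outside \([j]\) that no other term cancels. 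The vector then lies in \(\langle e_1,\ldots,e_j\rangle\), a contradiction. Hence \(\Char_{\mathcal M}(\PG(U_j))=j\) for \(j=0,\ldots,k-r\), and \(\ell_r(\mathcal M)\ge k-r+1\).

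For \(n=k\), the upper bound gives \(L_r(k,k,q)\le k-r+1\), and the construction gives equality. I expect the only delicate step to be the choice of the \(w_i\): they must avoid creating extra coordinate points, and their indices must stay within \([k]\).
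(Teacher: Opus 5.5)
Your overall plan is the paper's: take the $k$ coordinate points with multiplicity one, and for each $j\in\{0,\ldots,k-r\}$ exhibit a codimension-$r$ flat containing exactly $j$ of them. Monotonicity, the upper bound, the case $r=k$, and the equality at $n=k$ are all fine. The gap is in your explicit subspaces. You assert that the vectors $w_i=e_i+e_{k-r+i}$ exist "because $k-r+i\le k$". But $i$ runs up to $k-r$, so this needs $2(k-r)\le k$, i.e.\ $k\le 2r$. Whenever $k>2r$, including the hyperplane case $r=1$, $k\ge3$, the vector $w_{k-r}$ refers to the nonexistent coordinate $e_{2(k-r)}$. So $U_j$ is undefined for every $j<k-r$; for $(k,r)=(3,1)$ you would need $w_2=e_2+e_4$ in $\F_q^3$. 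Choosing other partner indices does not rescue the idea. Your ``no extra coordinate points'' check relies on $k-r-j$ basis vectors with pairwise disjoint supports of size at least $2$, all inside the $k-j$ coordinates outside $[j]$. That requires $2(k-r-j)\le k-j$, which fails for $j<k-2r$: for instance, no $2$-dimensional subspace of $\F_q^3$ has such a basis, since it would need $4$ coordinates.

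The missing idea is to let the supports overlap and certify the incidences through a kernel description, which is exactly what the paper does. Since $k-j\ge r$, define a surjection $\pi_j:V\to\F_q^r$ by $\pi_j(e_i)=0$ for $i\le j$, $\pi_j(e_{j+l})=f_l$ for $1\le l\le r$, and $\pi_j(e_i)=f_1$ for $i>j+r$, and put $U_j=\ker\pi_j$. Then $\dim U_j=k-r$. Moreover $e_i\in U_j$ if and only if $\pi_j(e_i)=0$, if and only if $i\le j$, so $\Char_{\mathcal M}(\PG(U_j))=j$ immediately. Concretely, $U_j=\langle e_1,\ldots,e_j\rangle+\langle e_i-e_{j+1}: j+r<i\le k\rangle$, and the extra generators all share the coordinate $j+1$. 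The paper indexes the same flats by the complementary character $t=k-j$. With this replacement the rest of your argument goes through unchanged.
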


\begin{proof}
The monotonicity is immediate from the budget formulation. Every projective system admissible for the budget \(n\) is also admissible for the budget \(n+1\). Moreover, the upper bound is precisely the universal estimate \eqref{eq:intro-two-bounds}.

To establish the lower bound, let \(e_1,\ldots,e_k\) be a basis of \(V=\F_q^k\), and consider the simple projective system \(\mathcal B=\{[e_1],\ldots,[e_k]\}.\)
Since these points are projectively independent, \(\mathcal B\) spans \(\Pi\) and has size \(k\).
We shall show that its complementary character spectrum is precisely \(\{r,r+1,\ldots,k\}\).
Fix an integer \(t\) with \(r\le t\le k\). Let \(f_1,\ldots,f_r\) be a basis of \(\F_q^r\), and define a linear map \(\pi_t:V\to\F_q^r\) by
\[\pi_t(e_i)=
\begin{cases}
f_i, & 1\le i\le r,\\
f_1, & r<i\le t,\\
0,   & t<i\le k.
\end{cases}\]
This map is well defined because its values are specified on a basis of \(V\). Moreover, it is surjective, since the images of \(e_1,\ldots,e_r\) span \(\F_q^r\). Hence its kernel \(U_t\) has codimension \(r\), and \(\PG(U_t)\) belongs to \(\mathcal F_r(\Pi)\).
By construction, \(e_i\in U_t\) if and only if \(i>t\). Thus exactly \(k-t\) points of \(\mathcal B\) lie in \(\PG(U_t)\), while the remaining \(t\) points lie outside it. Since every point of \(\mathcal B\) has multiplicity one, the corresponding complementary character is
\(|\mathcal B|-\Char_{\mathcal B}(\PG(U_t))=k-(k-t)=t.\)
As \(t\) ranges over \(\{r,\ldots,k\}\), every integer in this interval occurs in \(\Omega_r(\mathcal B)\). Conversely, the general bound on complementary characters gives \(\Omega_r(\mathcal B)\subseteq\{r,\ldots,k\}\), because \(\mathcal B\) is spanning and has size \(k\). Therefore, \(\Omega_r(\mathcal B)=\{r,r+1,\ldots,k\}\) and \(\ell_r(\mathcal B)=k-r+1.\)
Since \(|\mathcal B|=k\le n\), it is admissible under every budget \(n\ge k\). Consequently, \(L_r(n,k,q)\ge k-r+1\), as required.
Finally, when \(n=k\), the universal interval bound gives \(L_r(k,k,q)\le k-r+1\). Together with the lower bound just established, this yields \(L_r(k,k,q)=k-r+1\).
\end{proof}

When \(r=k\), the only member of \(\mathcal F_k(\Pi)\) is the empty flat, so \(\Omega_k(\mathcal M)=\{|\mathcal M|\}\). Consequently, \(L_k(n,k,q)=1\) for every \(n\ge k\), \(\nu_k(k,q)=k\), and a \(k\)-FCS system has size exactly \(k\).

\subsection{\texorpdfstring{The existence of \(r\)-FCS systems}{The existence of r-FCS systems}}\label{subsec:full-spectrum}

We begin with the interval extremum. The following lifting construction embeds the complementary hyperplane spectrum of a projective system into a higher-order spectrum by adjoining \(r-1\) projectively independent points. The construction also yields a comparison between the corresponding size-constrained extremal functions.
For an integer \(a\) and a set \(A\subseteq\mathbb Z\), write 
\(a+A=\{\,a+x:x\in A\,\}. \)

\begin{proposition}\label{prop:lifting}
Let \(1\le r\le k\), put \(k_0=k-r+1\), and let \(\mathcal N\) be a projective \([N,k_0]\) system. Then there exists a projective \([N+r-1,k]\) system \(\mathcal M\) such that
\((r-1)+\Omega_1(\mathcal N)\subseteq\Omega_r(\mathcal M).\)
Consequently, for every \(n\ge k\),
\[L_r(n,k,q)\ge L_1(n-r+1,k-r+1,q).\]
\end{proposition}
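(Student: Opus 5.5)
The plan is a direct-sum construction. Write \(V=V_0\oplus V_1\) with \(\dim V_0=k_0=k-r+1\) and \(\dim V_1=r-1\). Fix a basis \(f_1,\ldots,f_{r-1}\) of \(V_1\). Embed \(\mathcal N\) into \(\PG(V_0)\subseteq\Pi\) via a linear isomorphism \(\F_q^{k_0}\cong V_0\). Then let \(\mathcal M\) be \(\mathcal N\) together with the \(r-1\) points \([f_1],\ldots,[f_{r-1}]\), each of multiplicity one. Since \(\mathcal N\) spans \(\PG(V_0)\) and the \(f_j\) span \(V_1\), \(\mathcal M\) spans \(\Pi\), and \(|\mathcal M|=N+r-1\). (When \(r=1\) there is nothing to add and the statement is trivial.)

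For the spectral inclusion, fix a hyperplane \(H_0=\PG(W_0)\) of \(\PG(V_0)\), so \(W_0<V_0\) has dimension \(k_0-1=k-r\). I will regard \(W_0\) itself as a subspace of \(V\) of dimension \(k-r\), so that \(\Lambda=\PG(W_0)\in\mathcal F_r(\Pi)\). The occurrences of \(\mathcal M\) inside \(\Lambda\) are exactly the occurrences of \(\mathcal N\) inside \(H_0\), because the points \([f_j]\) do not lie in \(V_0\supseteq W_0\). Hence
\[|\mathcal M|-\Char_{\mathcal M}(\Lambda)=(r-1)+\bigl(N-\Char_{\mathcal N}(H_0)\bigr),\]
and letting \(H_0\) range over all hyperplanes gives \((r-1)+\Omega_1(\mathcal N)\subseteq\Omega_r(\mathcal M)\). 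The only point to verify is the codimension count: \(\dim W_0=k_0-1=k-r\), which is indeed codimension \(r\) in \(V\).

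For the consequence, take \(n\ge k\) and put \(n_0=n-r+1\ge k_0\). Choose a projective system \(\mathcal N\) with \(|\mathcal N|\le n_0\) and \(\ell_1(\mathcal N)=L_1(n_0,k_0,q)\); it exists because the maximum is over a nonempty finite range of values. The construction gives \(\mathcal M\) with \(|\mathcal M|=|\mathcal N|+r-1\le n\). The translated set \((r-1)+\Omega_1(\mathcal N)\) has cardinality \(\ell_1(\mathcal N)\) and lies in \(\Omega_r(\mathcal M)\), so
\[L_r(n,k,q)\ge\ell_r(\mathcal M)\ge L_1(n-r+1,k-r+1,q).\]
There is no real obstacle in this argument. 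The step needing the most care is confirming that \(\PG(W_0)\) excludes every added point, which holds because \(V_0\cap V_1=0\).
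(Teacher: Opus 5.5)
Your proposal is correct and follows essentially the same route as the paper: the paper also uses the direct sum \(V=E\oplus V_0\), adjoins the \(r-1\) basis points of \(E\) with multiplicity one, and takes the hyperplanes of \(V_0\) as the codimension-\(r\) flats that carry the translated spectrum. The deduction of \(L_r(n,k,q)\ge L_1(n-r+1,k-r+1,q)\) by choosing an extremal \(\mathcal N\) of size at most \(n-r+1\) likewise matches the paper.
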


\begin{proof}
Let \(V=E\oplus V_0\), where \(\dim E=r-1\) and \(\dim V_0=k_0=k-r+1\). Choose a basis \(e_1,\ldots,e_{r-1}\) of \(E\), and let \(h_1,\ldots,h_N\in V_0\) be nonzero vector representatives of the point occurrences of \(\mathcal N\), repeated according to their multiplicities. Consider the projective multiset \(\mathcal M\) in \(\PG(V)\) represented by
\(e_1,\ldots,e_{r-1},h_1,\ldots,h_N\), with both summands viewed as subspaces of \(V\). When \(r=1\), the first list is empty.
Since \(e_1,\ldots,e_{r-1}\) span \(E\) and the representatives of \(\mathcal N\) span \(V_0\), these vectors together span \(V\). Thus \(\mathcal M\) is a projective system of rank \(k\) and size \(N+r-1\).
We next show that the complementary hyperplane spectrum of \(\mathcal N\) is preserved, up to a translation by \(r-1\), among the complementary characters of \(\mathcal M\) on codimension-\(r\) flats. Let \(K\) be an arbitrary hyperplane of \(V_0\). Since \(\dim K=k_0-1=k-r\), its projectivization \(\PG(K)\), regarded as a subspace of \(\PG(V)\), has codimension \(r\). Here \(K=\{0\}\) is allowed when \(k_0=1\), in accordance with the convention \(\PG(\{0\})=\varnothing\).
Because \(E\cap V_0=\{0\}\), none of the \(r-1\) additional points represented by \(e_i\) belongs to \(\PG(K)\). Moreover, the remaining \(N\) occurrences are supported in \(\PG(V_0)\), and their incidences with \(\PG(K)\) are exactly those of \(\mathcal N\). Consequently,
\(\Char_{\mathcal M}(\PG(K))=\Char_{\mathcal N}(\PG(K)).\)
Since \(|\mathcal M|=N+r-1\), it follows that
\(|\mathcal M|-\Char_{\mathcal M}(\PG(K))=(r-1)+\bigl(N-\Char_{\mathcal N}(\PG(K))\bigr).\)
As \(K\) ranges over all hyperplanes of \(V_0\), each value in \(\Omega_1(\mathcal N)\)
therefore gives a value in \(\Omega_r(\mathcal M)\) obtained by adding \(r-1\). This proves
\((r-1)+\Omega_1(\mathcal N)\subseteq\Omega_r(\mathcal M).\)

It remains to establish the inequality between the extremal functions. Fix \(n\ge k\), and choose a projective system \(\mathcal N\) of rank \(k_0\) attaining \(L_1(n-r+1,k_0,q)\). By the size-budget condition, its total multiplicity satisfies \(N\le n-r+1\). The construction above then produces a projective system \(\mathcal M\) of rank \(k\) and size \(N+r-1\le n\), so \(\mathcal M\) is admissible for \(L_r(n,k,q)\).
Finally, translating a set of integers does not change its cardinality. Therefore, the spectral inclusion established above gives
\(\ell_r(\mathcal M)\ge\ell_1(\mathcal N)=L_1(n-r+1,k_0,q).\)
Since \(L_r(n,k,q)\) is the maximum of \(\ell_r\) over all projective systems admissible under the budget \(n\), we obtain \(L_r(n,k,q)\ge L_1(n-r+1,k-r+1,q),\) as required.
\end{proof}

\begin{proof}[Proof of Theorem~\ref{thm:intro-fixed-size}\textup{(i)}]
Put \(n_0=n-r+1\) and \(k_0=k-r+1\). Since \(n\ge k\), we have \(n_0\ge k_0\), while the hypothesis gives \(n_0<2^{k_0}\). By Theorem~\ref{thm:ordinary-FWS}, there exists a \(1\)-FCS projective \([n_0,k_0]\) system \(\mathcal N\). In particular, \(\Omega_1(\mathcal N)=\{1,2,\ldots,n_0\}\).
Applying Proposition~\ref{prop:lifting} to \(\mathcal N\), we obtain a projective system \(\mathcal M\) of rank \(k\) and total multiplicity \(n_0+r-1=n\), whose complementary character spectrum contains
\((r-1)+\Omega_1(\mathcal N)=\{r,r+1,\ldots,n\}.\)
On the other hand, since \(\mathcal M\) spans \(\Pi\), the occurrences outside any codimension-\(r\) flat must span the corresponding \(r\)-dimensional quotient space. Thus every complementary character lies between \(r\) and \(n\), and hence
\(\{r,r+1,\ldots,n\}\subseteq\Omega_r(\mathcal M)\subseteq\{r,r+1,\ldots,n\}.\)
Consequently,
\(\Omega_r(\mathcal M)=\{r,r+1,\ldots,n\}\),
so \(\mathcal M\) is an \(r\)-FCS projective system with \(\ell_r(\mathcal M)=n-r+1\).
Since \(\mathcal M\) is admissible under the size budget \(n\), it follows that \(L_r(n,k,q)\ge n-r+1\). The reverse inequality is the universal interval bound in \eqref{eq:intro-two-bounds}. Therefore, \(L_r(n,k,q)=n-r+1\),
as claimed.
\end{proof}

The proof of Theorem~\ref{thm:intro-fixed-size}\textup{(ii)} is provided in Subsection~\ref{subsec:codim-one}.
The sufficient condition in Theorem~\ref{thm:intro-fixed-size}\textup{(i)} is expected to be sharp. This leads to the following conjecture.

\begin{conjecture}\label{conj:rFCS}
Let \(1\le r\le k\le n\). An \(r\)-FCS projective \([n,k]\) system exists if and only if
\(n-r+1<2^{k-r+1}.\)
Equivalently,
\[L_r(n,k,q)=n-r+1\quad\Longleftrightarrow\quad n-r+1<2^{k-r+1}.\]
\end{conjecture}

The direct-sum construction has a distinctive geometric feature. There is a projective \((k-r)\)-flat containing the entire lower-dimensional projective system, while the remaining \(r-1\) point occurrences lie outside this flat. Moreover, all codimension-\(r\) flats used to realize the full spectrum are hyperplanes of this common flat. The next proposition shows that, whenever an \(r\)-FCS system admits such a common-flat structure, the numerical condition in Conjecture~\ref{conj:rFCS} is necessarily satisfied.

\begin{proposition}\label{prop:common-flat}
Let \(1\le r\le k-1\), and let \(\mathcal M\) be a projective \([n,k]\) system. Suppose that there exists a projective \((k-r)\)-flat \(\Lambda\subseteq\Pi\) such that
\(\sum_{P\notin\Lambda}m(P)=r-1.\)
Assume further that, for every \(t\in\{r,\ldots,n\}\), there is a hyperplane \(\Gamma_t\) of \(\Lambda\) satisfying
\(n-\Char_{\mathcal M}(\Gamma_t)=t.\)
Then
\[n-r+1<2^{k-r+1}.\]
\end{proposition}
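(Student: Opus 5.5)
The plan is to reduce the statement to the ordinary full weight spectrum theorem, Theorem~\ref{thm:ordinary-FWS}, applied to the restriction of \(\mathcal M\) to the flat \(\Lambda\).

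Write \(\Lambda=\PG(U)\) with \(\dim U=k-r+1=:k_0\). Let \(\mathcal N\) be the restriction of \(\mathcal M\) to \(\Lambda\), that is, \(m_{\mathcal N}(P)=m(P)\) for \(P\in\Lambda\) and \(0\) otherwise. By hypothesis \(|\mathcal N|=n-(r-1)=:n_0\). We check that \(\mathcal N\) spans \(\Lambda\). The occurrences outside \(\Lambda\) number only \(r-1\), so they span a subspace \(E\) of dimension at most \(r-1\). Since \(\mathcal M\) spans \(V\), we have \(V=\langle\supp\mathcal N\rangle+E\). Hence
\[
\dim\langle\supp\mathcal N\rangle\ge k-(r-1)=k_0,
\]
and since \(\langle\supp\mathcal N\rangle\le U\), equality holds. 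Thus \(\mathcal N\) is a projective \([n_0,k_0]\) system in \(\Lambda\cong\PG(k_0-1,q)\). Note \(n_0\ge k_0\) because \(n\ge k\).

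Next, compare spectra. For each hyperplane \(\Gamma_t\) of \(\Lambda\), every point of \(\Gamma_t\) lies in \(\Lambda\), so \(\Char_{\mathcal M}(\Gamma_t)=\Char_{\mathcal N}(\Gamma_t)\). Therefore
\[
n_0-\Char_{\mathcal N}(\Gamma_t)=t-(r-1).
\]
As \(t\) ranges over \(\{r,\ldots,n\}\), the values \(t-(r-1)\) range over \(\{1,\ldots,n_0\}\). So \(\Omega_1(\mathcal N)\supseteq\{1,\ldots,n_0\}\). The universal bound gives \(\Omega_1(\mathcal N)\subseteq\{1,\ldots,n_0\}\), so \(\mathcal N\) is a \(1\)-FCS projective \([n_0,k_0]\) system. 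The "only if" direction of Theorem~\ref{thm:ordinary-FWS} then gives \(n_0<2^{k_0}\), which is exactly \(n-r+1<2^{k-r+1}\).

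The only step needing care is the spanning argument, and it is a short dimension count. There is no real obstacle beyond it, since the substance is carried by Alderson's theorem. Two boundary points should be checked. First, the case \(k_0=2\) (\(r=k-1\)) needs no special treatment: hyperplanes of \(\Lambda\) are then points, and the same argument applies. Second, the notion "hyperplane of \(\Lambda\)" is intrinsic to \(\PG(U)\), so Theorem~\ref{thm:ordinary-FWS} can be applied after identifying \(U\cong\F_q^{k_0}\).
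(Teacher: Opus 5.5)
Your proposal is correct and follows essentially the same route as the paper's proof. You restrict \(\mathcal M\) to \(\Lambda\) and show the restriction spans \(\Lambda\) by the same dimension count (the \(r-1\) outside occurrences add at most \(r-1\) dimensions). You then check that the restriction is a \(1\)-FCS projective \([n-r+1,k-r+1]\) system and conclude with the necessity direction of Theorem~\ref{thm:ordinary-FWS}.
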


\begin{proof}
Let \(W_\Lambda\le V\) be the vector subspace
corresponding to \(\Lambda\), so that
\(\dim W_\Lambda=k-r+1\). Let
\(\mathcal M|_\Lambda\) denote the restriction
of \(\mathcal M\) to \(\Lambda\), with the
multiplicities inherited from \(\mathcal M\).
Since the total multiplicity outside \(\Lambda\)
is \(r-1\), the restricted system has total
multiplicity \(n-r+1\).

We first show that \(\mathcal M|_\Lambda\) spans \(\Lambda\). Let \(R\le W_\Lambda\) be the vector span of its point occurrences. Since \(\mathcal M\) spans \(V\) and has only \(r-1\) occurrences outside \(\Lambda\), these additional occurrences can increase the dimension of \(R\) by at most \(r-1\). Consequently, \(k\le\dim R+r-1.\)
On the other hand, \(R\le W_\Lambda\) and \(\dim W_\Lambda=k-r+1\). Hence \(\dim R=k-r+1\), which implies \(R=W_\Lambda\). Thus \(\mathcal M|_\Lambda\) is a projective \([n-r+1,k-r+1]\) system.

We next show that the restricted system is \(1\)-FCS. For each \(t\in\{r,\ldots,n\}\), the hypothesis provides a hyperplane \(\Gamma_t\) of \(\Lambda\) such that
\(n-\Char_{\mathcal M}(\Gamma_t)=t.\)
Since \(\Lambda\) has projective dimension \(k-r\), each \(\Gamma_t\) is a codimension-\(r\) flat of \(\Pi\).
Moreover, \(\Gamma_t\subseteq\Lambda\), so the restriction preserves its character:
\(\Char_{\mathcal M|_\Lambda}(\Gamma_t)=\Char_{\mathcal M}(\Gamma_t).\)
It follows that
\[\begin{aligned}
(n-r+1)-\Char_{\mathcal M|_\Lambda}(\Gamma_t)
&=(n-r+1)-\Char_{\mathcal M}(\Gamma_t)\\
&=\bigl(n-\Char_{\mathcal M}(\Gamma_t)\bigr)
  -(r-1)\\
&=t-r+1.
\end{aligned}
\]
As \(t\) ranges from \(r\) to \(n\), these complementary hyperplane characters attain every integer from \(1\) to \(n-r+1\). Since \(\mathcal M|_\Lambda\) is spanning and has total multiplicity \(n-r+1\), all its complementary hyperplane characters lie in this interval. Therefore,
\(\Omega_1(\mathcal M|_\Lambda)=\{1,2,\ldots,n-r+1\}.\)
Thus \(\mathcal M|_\Lambda\) is a \(1\)-FCS projective system of rank \(k-r+1\) and size \(n-r+1\).
Applying the necessity part of Theorem~\ref{thm:ordinary-FWS}, we obtain
\(n-r+1<2^{k-r+1},\)
as required.
\end{proof}

The direct-sum lifting in Proposition~\ref{prop:lifting} is a special case of the geometry described above: the projectivization of \(V_0\) is the common \((k-r)\)-flat, while the \(r-1\) added basis points account for the entire multiplicity outside it. Thus any counterexample to the necessity direction of Conjecture~\ref{conj:rFCS} must depart from this common-flat configuration. In particular, its full spectrum cannot be realized entirely by hyperplanes of a single \((k-r)\)-flat carrying all but \(r-1\) point occurrences.

\begin{remark}\label{rem:k4r2}
Conjecture~\ref{conj:rFCS} holds for \((k,r)=(4,2)\), where the codimension-\(2\) flats are the lines of \(\PG(3,q)\) and the conjectured range is \(n\le8\). Existence for \(4\le n\le8\) is Theorem~\ref{thm:intro-fixed-size}(i). Conversely, let \(\mathcal M\) be a \(2\)-FCS projective \([n,4]\) system with \(n\ge9\), so that every integer in \(\{0,1,\ldots,n-2\}\) is the character of some line. Let \(\Lambda\) be a line with \(\Char_{\mathcal M}(\Lambda)=n-2\). Exactly two occurrences lie outside \(\Lambda\). Since \(\mathcal M\) spans \(\Pi\), they are at two distinct points \(R_1,R_2\) with \(\langle\Lambda,R_1,R_2\rangle=\Pi\). In particular, the line \(R_1R_2\) is disjoint from \(\Lambda\). Let \(\Lambda'\) be a line with \(\Char_{\mathcal M}(\Lambda')=n-3\). Then \(\Lambda'\ne\Lambda\), and \(\Lambda'\) contains at most one of \(R_1,R_2\), because \(\Char_{\mathcal M}(R_1R_2)=2<n-3\). Since \(\Lambda'\) meets \(\Lambda\) in at most one point, it follows that \(\Lambda'\cap\Lambda=\{X\}\) for a point \(X\) with \(m(X)\ge n-4\), and the occurrences on \(\Lambda\) other than those at \(X\) number \((n-2)-m(X)\le2\). Every line through \(X\) has character at least \(m(X)\ge n-4\). A line \(L\) not through \(X\) meets \(\Lambda\) in at most one point different from \(X\), which carries at most \(2\) occurrences, and \(L\) contains at most one of \(R_1,R_2\) unless \(L=R_1R_2\). Hence \(\Char_{\mathcal M}(L)\le3\). Since \(n\ge9\), the value \(n-5\) satisfies \(3<n-5<n-4\), so no line has character \(n-5\), a contradiction. The argument does not use the common-flat hypothesis of Proposition~\ref{prop:common-flat}.
\end{remark}

\subsection{\texorpdfstring{The point case \(r=k-1\)}{The point case r=k-1}}\label{subsec:codim-one}
Throughout this subsection, assume \(k\ge2\). We now prove part~\textup{(ii)} of Theorem~\ref{thm:intro-fixed-size} by treating \(r=k-1\) exactly. In this case the members of \(\mathcal F_{k-1}(\Pi)\) are precisely the projective points. Hence the character spectrum is simply the set of values assumed by the point-multiplicity function. Therefore, the incidence geometry enters only through the requirement that the positive support span \(\Pi\). This reduces the extremal problem to a discrete optimization with two independent costs: an arithmetic cost for producing many distinct nonnegative multiplicities and a geometric cost for ensuring that the points carrying positive multiplicity span the ambient projective space.

Extend every multiplicity function by zero to all points of \(\Pi\), and, for \(2\le m\le\vartheta_k\), put
\[\mu_k(m)=\binom{m}{2}+\max\{0,k-m+1\}.\]
The next lemma identifies the minimum total multiplicity required to realize \(m\) distinct point characters.

\begin{lemma}\label{lem:codim-cost}
Let \(2\le m\le\vartheta_k\), and let \(a:\Pi\to\Z_{\ge0}\) have spanning positive support and exactly \(m\) distinct values. Then
\[\sum_{P\in\Pi}a(P)\ge\mu_k(m).\]
Conversely, for every \(2\le m\le\vartheta_k\), equality is attained by such a function \(a\).
\end{lemma}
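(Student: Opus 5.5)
The plan is to split according to whether \(0\) is one of the \(m\) values of \(a\), and to optimize the two cases separately.

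\emph{Lower bound, \(0\) is a value.} Then \(a\) takes \(m-1\) distinct positive values \(v_1<\dots<v_{m-1}\). Each value is attained at least once, and we have \(v_i\ge i\). So the contribution of one representative point per value is at least \(1+2+\cdots+(m-1)=\binom m2\).

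The positive support must span \(\Pi\), so it contains at least \(k\) points. At most \(m-1\) of these are accounted for by the chosen representatives. The remaining \(\max\{0,k-m+1\}\) support points each carry multiplicity at least \(1\), and adding their contribution gives \(\sum a\ge\binom m2+\max\{0,k-m+1\}=\mu_k(m)\).

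\emph{Lower bound, \(0\) is not a value.} Then every point of \(\Pi\) has positive multiplicity, and there are \(m\) distinct positive values. One representative per value contributes at least \(1+\cdots+m=\binom{m+1}2=\binom m2+m\). The other \(\vartheta_k-m\) points contribute at least \(1\) each, so \(\sum a\ge\binom m2+m\ge\binom m2+\max\{0,k-m+1\}\), using \(m\ge 1\) and \(m\ge k-m+1\) when the latter is positive. Hence the bound holds in both cases.

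\emph{Construction for \(m\le k+1\).} Take a basis \(e_1,\dots,e_k\). Put values \(1,2,\dots,m-1\) on \([e_1],\dots,[e_{m-1}]\), put value \(1\) on \([e_m],\dots,[e_k]\), and put \(0\) on every other point. Since \(\vartheta_k>k\), some point of \(\Pi\) actually carries \(0\).

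The support is the basis, so it spans. The values are exactly \(\{0,1,\dots,m-1\}\), which is \(m\) values. The total is \(\binom m2+(k-m+1)=\mu_k(m)\).

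\emph{Construction for \(m> k+1\).} Choose \(m-1\) points containing a basis; this is possible since \(m-1\le\vartheta_k-1<\vartheta_k\) and \(m-1\ge k\). Give them the values \(1,\dots,m-1\) and give \(0\) to the rest. Because \(m-1<\vartheta_k\), the value \(0\) occurs. The sum is \(\binom m2=\mu_k(m)\).

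\emph{Main obstacle.} The only step needing care is that \(0\) really is attained in the constructions, which is exactly why the hypothesis \(m\le\vartheta_k\) is needed. The case \(m=\vartheta_k\) also needs checking: there the second construction uses all but one point and still leaves one zero, so it works.
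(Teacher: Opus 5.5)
Your argument follows the paper's proof: split on whether \(0\) is a value, and combine the arithmetic bound \(v_i\ge i\) with the fact that a spanning support has at least \(k\) points. The two constructions are also the paper's. Splitting at \(m\le k+1\) instead of \(m\le k\) is harmless, since for \(m=k+1\) the padding set \([e_m],\dots,[e_k]\) is empty.

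There is, however, one false step in the case where \(0\) is not a value. You assert \(m\ge k-m+1\) whenever the right side is positive, but this fails as soon as \(2m\le k\). For instance, \(m=2\), \(k=5\) gives \(\binom22+2=3<5=\mu_5(2)\). So the chain \(\sum a\ge\binom m2+m\ge\mu_k(m)\) is invalid as written.

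The repair uses exactly what you observed and then discarded. Since every point of \(\Pi\) carries a positive value,
\[
\sum a\ge\binom{m+1}2+(\vartheta_k-m)=\binom m2+\vartheta_k\ge\binom m2+k\ge\mu_k(m).
\]
Alternatively, as in the paper, apply the spanning count in this case too: \(\sum a\ge\binom{m+1}2+\max\{0,k-m\}\). This exceeds \(\mu_k(m)\) by \(m-1\) if \(m\le k\) and by \(m\) if \(m\ge k+1\).
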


\begin{proof}
Let \(p\) be the number of distinct positive values assumed by \(a\), and write them as
\(1\le u_1<u_2<\cdots<u_p\).
Since these are distinct positive integers, \(u_i\ge i\) for every \(i\). Thus choosing one point carrying each positive value already contributes at least
\(1+2+\cdots+p=\binom{p+1}{2}\) to the total multiplicity.
There is a second, independent contribution coming from spanning. The positive support of \(a\) spans \(\PG(k-1,q)\), and therefore contains at least \(k\) projectively independent points. If the \(p\) points chosen above do not yet provide \(k\) support points, at least \(k-p\) additional points of positive multiplicity are required, each contributing at least one. Consequently,
\[\sum_{P\in\Pi}a(P)\ge\binom{p+1}{2}+\max\{0,k-p\}.\]

We now relate \(p\) to the total number \(m\) of distinct values. If zero occurs among the values of \(a\), then \(p=m-1\), and the preceding bound becomes exactly
\(\binom{m}{2}+\max\{0,k-m+1\}=\mu_k(m).\)
If zero does not occur, then \(p=m\). In that case the same lower bound is strictly larger than \(\mu_k(m)\): the excess is \(m-1\) when \(m\le k\), and \(m\) when \(m\ge k+1\). In particular, equality in the asserted bound is possible only when zero is one of the character values.

It remains to show that the bound is sharp. First suppose \(2\le m\le k\). Choose \(k\) projectively independent points \(P_1,\ldots,P_k\). Assign multiplicities \(1,2,\ldots,m-1\) to \(P_1,\ldots,P_{m-1}\), respectively, assign multiplicity \(1\) to the remaining \(k-m+1\) points, and assign zero to every other point of \(\Pi\). The positive support spans \(\Pi\). Since \(k<\vartheta_k\), at least one point has value zero, so the value set is exactly \(\{0,1,\ldots,m-1\}\). The total multiplicity is
\(\binom{m}{2}+k-m+1=\mu_k(m).\)
Now suppose \(k+1\le m\le\vartheta_k\). Choose \(m-1\) projective points containing \(k\) projectively independent points, and assign the values \(1,2,\ldots,m-1\) bijectively to them. Assign zero to all remaining points. At least one such point exists because \(m-1\le\vartheta_k-1\). The positive support spans \(\Pi\), the value set has exactly \(m\) elements, and the total multiplicity is \(\binom{m}{2}=\mu_k(m)\). This proves the sharpness in all cases.
\end{proof}

\begin{proof}[Proof of Theorem~\ref{thm:intro-fixed-size}\textup{(ii)}]
Let \(\mathcal M\) be any projective system admissible for the budget \(n\), and write \(N=|\mathcal M|\le n\). Extend its multiplicity function to
\(a:\Pi\longrightarrow\Z_{\ge0}\)
by setting \(a(P)=0\) outside the support of \(\mathcal M\).
Because \(r=k-1\), every member of \(\mathcal F_{k-1}(\Pi)\) is a single projective point. Thus the character spectrum is exactly \(a(\Pi)\), while the complementary spectrum is obtained from it by the affine involution \(x\mapsto N-x\). This map is bijective and therefore preserves the number of distinct values, and hence \(\ell_{k-1}(\mathcal M)=|a(\Pi)|\).
Let \(m=\ell_{k-1}(\mathcal M)\). Since \(k\ge2\) and \(n\ge k\), the value \(m=2\) is always feasible. Indeed, \(\mu_k(2)=k\le n\). Hence a system with only one character value cannot be extremal, and we may assume \(m\ge2\).
The positive support of \(a\) spans \(\Pi\), so Lemma~\ref{lem:codim-cost} gives \(\mu_k(m)\le N\le n\). Moreover, \(a\) is defined on the \(\vartheta_k\) points of \(\Pi\), so it can assume at most \(\vartheta_k\) distinct values. Therefore every attainable spectrum size \(m\ge2\) satisfies \(m\le\vartheta_k\) and \(\mu_k(m)\le n\); the case \(m=1\) cannot increase the maximum.
This proves the upper bound in the formula of Theorem~\ref{thm:intro-fixed-size}\textup{(ii)}.

Conversely, suppose that an integer \(m\) satisfies these two conditions. By the sharpness part of Lemma~\ref{lem:codim-cost}, there exists a multiplicity function with spanning positive support, exactly \(m\) distinct values, and total multiplicity \(\mu_k(m)\le n\).  Therefore, it defines a projective system admissible for the budget \(n\) with \(\ell_{k-1}=m\). Maximizing over all such \(m\) proves the exact formula for \(L_{k-1}(n,k,q)\).

We next determine the \(r\)-MCS threshold in this point case. Since \(M_{k-1}=\qbinom{k}{k-1}=\vartheta_k,\) a \((k-1)\)-MCS system must have a different character at every projective point. Thus its extended multiplicity function assumes exactly \(\vartheta_k\) distinct values. Applying the multiplicity-cost lemma with \(m=\vartheta_k\), and using \(\vartheta_k\ge k+1\), shows that its total multiplicity is at least
\(\mu_k(\vartheta_k)=\binom{\vartheta_k}{2}\).
The equality construction in Lemma~\ref{lem:codim-cost} attains this value, and hence
\(\nu_{k-1}(k,q)=\binom{\vartheta_k}{2}\).

Finally, consider the \(r\)-FCS equality case. Let \(\mathcal M\) be a \((k-1)\)-FCS projective system of total multiplicity \(n\). Its complementary character spectrum is \(\{k-1,k,\ldots,n\}\), and therefore it contains \(m=n-k+2\) distinct values. Since \(n=k+m-2\), the multiplicity-cost lemma requires \(\mu_k(m)\le k+m-2\).
If \(m\le k\), then \(\mu_k(m)=k+\binom{m-1}{2}\), so the difference between the two sides is
\(\mu_k(m)-(k+m-2)=\frac{(m-2)(m-3)}{2}.\)
If \(m\ge k+1\), then \(\mu_k(m)=\binom m2\), and the difference is \((m-2)(m-3)/2+(m-k-1)\), which is no smaller. It follows that the inequality can hold only for \(m\le3\). Since \(m=n-k+2\), necessarily \(n\in\{k,k+1\}\).
Conversely, both values occur. Indeed, when \(r=k-1\), the hypothesis of Theorem~\ref{thm:intro-fixed-size}\textup{(i)} becomes \(n-k+2<4\), which holds exactly for \(n=k\) and \(n=k+1\). This proves all the assertions in part~\textup{(ii)}.
\end{proof}

The proof of Lemma~\ref{lem:codim-cost} also determines the structure of every extremal multiplicity profile.

\begin{proposition}\label{prop:codim-equality}
Let \(2\le m\le\vartheta_k\), and let \(a:\Pi\to\Z_{\ge0}\) have spanning positive support, exactly \(m\) distinct values, and satisfy \(\sum_{P\in\Pi}a(P)=\mu_k(m).\) Then zero is one of the values of \(a\), and exactly one of the following holds.
\begin{enumerate}
\item[(i)] If \(2\le m\le k\), the positive support consists of exactly \(k\) projectively independent points. The value \(1\) occurs \(k-m+2\) times, while each of \(2,\ldots,m-1\) occurs exactly once.

\item[(ii)] If \(k+1\le m\le\vartheta_k\), the positive support consists of exactly \(m-1\) points containing \(k\) projectively independent points, and the values \(1,\ldots,m-1\) each occur exactly once.
\end{enumerate}
\end{proposition}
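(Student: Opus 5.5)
The plan is to revisit the chain of inequalities in the proof of Lemma~\ref{lem:codim-cost} and force equality at every step. Let \(p\) be the number of distinct positive values of \(a\), written \(u_1<\cdots<u_p\), and let \(s\) be the number of points in the positive support. That proof already shows that if zero is not a value of \(a\), the lower bound exceeds \(\mu_k(m)\) by \(m-1\ge1\) or by \(m\). Hence zero is a value and \(p=m-1\). It remains to identify the multiplicity profile.

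For this I will write a sharper exact decomposition. Choose one point carrying each value \(u_i\); these points contribute \(\sum_i u_i\), and the remaining \(s-p\) support points contribute at least \(s-p\). This gives
\[
\sum_{P}a(P)\;\ge\;\sum_{i=1}^{p}u_i+(s-p)\;\ge\;\binom{p+1}{2}+(s-p),
\qquad s\ge k.
\]
Equality in the first inequality means every non-chosen support point has value \(1\). Equality in the second means \(u_i=i\) for all \(i\), so the value set is \(\{0,1,\ldots,m-1\}\). Setting \(p=m-1\), the target value is \(\mu_k(m)=\binom{m}{2}+\max\{0,k-m+1\}\). Every inequality in the chain must therefore be tight, and in addition \(s-p\) must equal \(\max\{0,k-p\}\).

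I then split into the two cases of the statement.

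\emph{Case \(m\le k\), so \(p\le k-1\).} Tightness forces \(s-p=k-p\), so \(s=k\). A spanning set of exactly \(k\) points must be projectively independent. The surplus \(k-p\) points all carry value \(1\), and the value \(1\) also occurs once among the chosen points. So \(1\) occurs \(k-p+1=k-m+2\) times, while each of \(2,\ldots,m-1\) occurs exactly once, because any extra occurrence would be a surplus point with value greater than \(1\).

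\emph{Case \(m\ge k+1\), so \(p\ge k\).} Tightness forces \(s=p=m-1\). There are then no surplus points, so each value \(1,\ldots,m-1\) occurs exactly once. The spanning hypothesis means the support contains \(k\) independent points.

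The step needing the most care is the boundary bookkeeping. One must check that the accounting by "one representative per value plus surplus points" exactly matches \(\mu_k(m)\), including at \(m=k\) and \(m=k+1\), where \(\max\{0,k-m+1\}\) equals \(1\) and \(0\) respectively. One must also use \(s\ge k\) in the form \(s-p\ge\max\{0,k-p\}\), so that tightness pins down \(s\) exactly. Finally, the words "exactly one of the following holds" follow simply because the two cases are distinguished by \(m\).
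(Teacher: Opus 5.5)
Your proof is correct and follows essentially the same route as the paper. In both, zero is forced by the excess computed in Lemma~\ref{lem:codim-cost}, and then equality must hold in both the arithmetic bound (so the positive values are exactly \(1,\ldots,m-1\)) and the spanning bound. Your explicit chain \(\sum_P a(P)\ge\sum_i u_i+(s-p)\ge\binom{p+1}{2}+\max\{0,k-p\}\), with \(s\) the support size, is just a more careful bookkeeping of the paper's ``equality in both steps'' argument. It pins down \(s\) and the multiplicity-one surplus points exactly as the paper does.
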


\begin{proof}
The lower bound in Lemma~\ref{lem:codim-cost} is obtained by combining two independent inequalities. The first is arithmetic: \(p\) distinct positive integers have total at least \(1+\cdots+p\). The second is geometric: a spanning positive support contains at least \(k\) points. Equality in the final bound therefore forces equality in both steps.
In particular, zero must occur among the \(m\) values, so there are exactly \(m-1\) distinct positive values. Equality in the arithmetic bound forces these values to be precisely \(1,2,\ldots,m-1\), and each value greater than one must occur at exactly one point.

Suppose first that \(2\le m\le k\). The \(m-1\) points realizing the distinct positive values do not provide enough points to meet the minimum spanning cardinality. Exactly \(k-m+1\) additional positive support points are therefore required. Equality in the total multiplicity forces each additional point to have multiplicity one. Hence the support has exactly \(k\) points; because it spans \(\Pi\), these points are projectively independent. The value \(1\) occurs once among the representatives of the distinct positive values and \(k-m+1\) further times, for a total of \(k-m+2\).

If \(m\ge k+1\), the \(m-1\) representatives of the distinct positive values are already numerous enough to span. Equality leaves no multiplicity available for any additional positive support point. Thus the support consists of exactly these \(m-1\) points, each carrying one of the values \(1,\ldots,m-1\), and the spanning condition guarantees that they contain \(k\) projectively independent points.
\end{proof}

The exact formula in Theorem~\ref{thm:intro-fixed-size}\textup{(ii)} can also be written in closed form. Since
\[\mu_k(m)=\begin{cases}
k+\binom{m-1}{2}, & 2\le m\le k,\\[1mm]
\binom m2, & m\ge k+1,
\end{cases}\]
solving \(\mu_k(m)\le n\) gives
\begin{equation}\label{eq:codim-closed-form}
L_{k-1}(n,k,q)=\begin{cases}
\displaystyle
1+\left\lfloor\dfrac{1+\sqrt{1+8(n-k)}}{2}\right\rfloor,
&k\le n<\binom{k+1}{2},
\\[4mm]
\displaystyle
\min\left\{\vartheta_k,\,\left\lfloor\dfrac{1+\sqrt{1+8n}}{2}\right\rfloor\right\},
&n\ge\binom{k+1}{2}.
\end{cases}
\end{equation}

The equality classification also determines the multiplicity profiles of minimum-size \((k-1)\)-MCS systems. It does not assert that all systems with these profiles are projectively equivalent.

\begin{corollary}\label{cor:minimum-point-MCS}
Let \(\mathcal M\) be a minimum-size \((k-1)\)-MCS projective system. After extending its multiplicity function by zero to all points of \(\Pi\), its values are
\(0,1,\ldots,\vartheta_k-1\), each occurring exactly once.
\end{corollary}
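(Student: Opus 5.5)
The plan is to combine the threshold computation in the proof of Theorem~\ref{thm:intro-fixed-size}\textup{(ii)} with the equality classification in Proposition~\ref{prop:codim-equality}. Let \(\mathcal M\) be a minimum-size \((k-1)\)-MCS projective system, and let \(a:\Pi\to\Z_{\ge0}\) be its multiplicity function extended by zero. Since the members of \(\mathcal F_{k-1}(\Pi)\) are exactly the points of \(\Pi\), the MCS condition says that \(a\) is injective on the \(\vartheta_k\) points. Hence \(a\) assumes exactly \(m=\vartheta_k\) distinct values. Its positive support spans \(\Pi\) because \(\mathcal M\) is a projective system.

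Next, I pin down the total multiplicity. As observed in the Introduction, a system attaining the threshold has size exactly \(\nu_{k-1}(k,q)\). By Theorem~\ref{thm:intro-fixed-size}\textup{(ii)}, this size is \(\binom{\vartheta_k}{2}\). Because \(\vartheta_k\ge k+1\) (as \(q\ge2\) and \(k\ge2\)), we have \(\mu_k(\vartheta_k)=\binom{\vartheta_k}{2}\). Therefore
\[\sum_{P\in\Pi}a(P)=\mu_k(\vartheta_k),\]
so \(a\) satisfies all the hypotheses of Proposition~\ref{prop:codim-equality} with \(m=\vartheta_k\).

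I then apply case (ii) of Proposition~\ref{prop:codim-equality}. It gives that zero is a value of \(a\), that the positive support consists of exactly \(\vartheta_k-1\) points, and that the values \(1,\ldots,\vartheta_k-1\) each occur exactly once. The support has \(\vartheta_k-1\) points, so exactly one point of \(\Pi\) carries the value zero. This can also be seen directly from injectivity. Thus the value multiset is \(\{0,1,\ldots,\vartheta_k-1\}\), each value occurring once, as claimed.

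No step is a real obstacle. The only points needing care are the inequality \(\vartheta_k\ge k+1\), which selects case (ii) of Proposition~\ref{prop:codim-equality}, and the justification that a minimum-size system has size exactly \(\nu_{k-1}(k,q)\) rather than merely at most \(n\).
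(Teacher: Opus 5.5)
Your proposal is correct and follows the paper's proof: Theorem~\ref{thm:intro-fixed-size}\textup{(ii)} shows that a minimum-size system attains \(\mu_k(\vartheta_k)=\binom{\vartheta_k}{2}\), and since \(\vartheta_k\ge k+1\), Proposition~\ref{prop:codim-equality}\textup{(ii)} gives the profile. Your extra remarks, that the size is exactly \(\nu_{k-1}(k,q)\) and that exactly one point carries the value zero, just make explicit what the paper leaves implicit.
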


\begin{proof}
By Theorem~\ref{thm:intro-fixed-size}\textup{(ii)}, a minimum-size \((k-1)\)-MCS system attains the multiplicity-cost bound with \(m=\vartheta_k\). Since \(\vartheta_k\ge k+1\), Proposition~\ref{prop:codim-equality}\textup{(ii)} applies and gives the asserted profile.
\end{proof}

As a simple consequence of the closed formula, for \(k\ge3\), \(L_{k-1}(k+2,k,q)=3\),
whereas the interval upper bound equals \(4\). Thus the elementary size-budget bound already fails two units beyond the minimum budget. Moreover, the characterization of \((k-1)\)-FCS systems obtained above verifies Conjecture~\ref{conj:rFCS} completely in the point case.

\subsection{Global moment identities and local character obstructions}
\label{subsec:global-local}

The elementary bounds developed above depend only on the cardinality of the Grassmannian and on the range of possible character values. We now exploit the incidence structure of the Grassmannian together with the rank structure of the projective system. There are two complementary viewpoints. 
Globally, binomial moments of the character distribution can be evaluated by double counting labeled occurrences. Locally, the largest character carried by a flat measures how strongly the multiset can concentrate inside a subspace. The interaction between these two viewpoints gives substantially stronger restrictions on large character spectra.

We begin with the global moment identity stated in Theorem~\ref{thm:intro-moments}.

\begin{proof}[Proof of Theorem~\ref{thm:intro-moments}]
Let \(\mathcal M\) have total multiplicity \(N\), and label its point occurrences by \(P_1,\ldots,P_N\), distinguishing occurrences with different indices even when they represent the same projective point. Fix an integer \(t\ge0\), and consider the incidence set
\[\mathcal X_t=\left\{(\Lambda,J):\Lambda\in\mathcal F_r(\Pi),\ J\subseteq[N],\ |J|=t,\ P_j\in\Lambda\text{ for all }j\in J\right\}.\]
We count \(\mathcal X_t\) in two ways.
For a fixed \(\Lambda\in\mathcal F_r(\Pi)\), exactly \(\Char_{\mathcal M}(\Lambda)\) labeled occurrences lie in \(\Lambda\). Hence there are \(\binom{\Char_{\mathcal M}(\Lambda)}{t}\) possible choices of \(J\). Summing over all codimension-\(r\) flats, we have
\(|\mathcal X_t|=\sum_{\Lambda\in\mathcal F_r(\Pi)}\binom{\Char_{\mathcal M}(\Lambda)}{t}.\)

Conversely, fix \(J\subseteq[N]\) with \(|J|=t\), and let \(R_J\le V\) be the vector span of representatives of the occurrences indexed by \(J\).
By definition, \(\dim R_J=\rho_{\mathcal M}(J)\). A codimension-\(r\) flat \(\PG(U)\) contains all these occurrences if and only if \(R_J\subseteq U\).
The correspondence \(U\mapsto U/R_J\) gives a bijection between codimension-\(r\) subspaces of \(V\) containing \(R_J\) and codimension-\(r\) subspaces of \(V/R_J\). Since \(\dim(V/R_J)=k-\rho_{\mathcal M}(J)\), their number is
\(\qbinom{k-\rho_{\mathcal M}(J)}{r}.\)
If \(\rho_{\mathcal M}(J)>k-r\), no such subspace exists, and the Gaussian coefficient is zero by convention. Summing over all \(t\)-subsets \(J\), we have
\(|\mathcal X_t|=\sum_{\substack{J\subseteq[N]\\|J|=t}}\qbinom{k-\rho_{\mathcal M}(J)}{r}.\)
Comparing the two expressions for \(|\mathcal X_t|\), we prove \eqref{eq:binomial-moments}. This also covers \(t=0\), when \(J=\varnothing\) and \(R_J=\{0\}\), and \(t>N\), when both sums vanish.

We next prove \eqref{eq:moment-image-bound}. Assume \(1\le r\le k-1\) and \(t\ge1\), and write the distinct character values in increasing order as \(0\le b_0<b_1<\cdots<b_{\ell-1}\), where \(\ell=\ell_r(\mathcal M)\).
Choose one flat realizing each \(b_i\). Since the \(b_i\) are distinct nonnegative integers, we have \(b_i\ge i\). Moreover, \(x\mapsto\binom{x}{t}\) is nondecreasing on the nonnegative integers. Therefore, the selected flats contribute at least \(\sum_{i=0}^{\ell-1}\binom{i}{t}\) to the \(t\)th binomial moment. As all remaining contributions are nonnegative, the hockey-stick identity yields
\[\sum_{\Lambda\in\mathcal F_r(\Pi)}\binom{\Char_{\mathcal M}(\Lambda)}{t}
\ge\sum_{i=0}^{\ell-1}\binom{b_i}{t}\ge\sum_{i=0}^{\ell-1}\binom{i}{t}
=\binom{\ell}{t+1}.\]

For the upper bound, every \(t\)-subset \(J\subseteq[N]\) is nonempty, so \(\rho_{\mathcal M}(J)\ge1\). Consequently,
\[\qbinom{k-\rho_{\mathcal M}(J)}{r}\le\qbinom{k-1}{r}=B_r.\]
Since there are \(\binom{N}{t}\) choices of \(J\), the moment identity gives
\[\sum_{\substack{J\subseteq[N]\\|J|=t}}\qbinom{k-\rho_{\mathcal M}(J)}{r}\le B_r\binom{N}{t}.\]
Combining the last two inequalities with \eqref{eq:binomial-moments}, we prove \eqref{eq:moment-image-bound}.
\end{proof}

The first two moments already reveal different aspects of the geometry.
The first moment depends only on the total multiplicity, whereas the second detects whether distinct occurrences are supported at the same projective point.

\begin{corollary}\label{cor:first-moment-rigidity}
Let \(\mathcal M\) be a projective \([N,k]\) system, and let \(1\le r\le k-1\). Then
\begin{equation}\label{eq:first-image-bound}
\binom{\ell_r(\mathcal M)}2\le NB_r.
\end{equation}
Moreover, equality holds if and only if the multiset of character
values on \(\mathcal F_r(\Pi)\) is
\[
\underbrace{0,\ldots,0}_{M_r-\ell_r(\mathcal M)+1\ {\rm times}},
1,2,\ldots,\ell_r(\mathcal M)-1.
\]
\end{corollary}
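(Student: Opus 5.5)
The plan is to specialize Theorem~\ref{thm:intro-moments} to \(t=1\) and then trace when each inequality in the chain is tight. With \(t=1\), \eqref{eq:binomial-moments} reads
\[\sum_{\Lambda\in\mathcal F_r(\Pi)}\Char_{\mathcal M}(\Lambda)=\sum_{j=1}^{N}\qbinom{k-1}{r}=NB_r,\]
because every singleton \(J=\{j\}\) has \(\rho_{\mathcal M}(J)=1\), all columns being nonzero. In particular, the right-hand inequality in \eqref{eq:moment-image-bound} is an equality for \(t=1\). The left-hand inequality then gives \eqref{eq:first-image-bound} directly.

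For the equality characterization, I will recall the chain used in the proof of Theorem~\ref{thm:intro-moments}. Write \(\ell=\ell_r(\mathcal M)\) and let the distinct values be \(b_0<\cdots<b_{\ell-1}\). Choose one flat realizing each value. Then
\[NB_r=\sum_{\Lambda}\Char_{\mathcal M}(\Lambda)\ \ge\ \sum_{i}b_i\ \ge\ \sum_{i}i=\binom{\ell}{2}.\]
Equality in \eqref{eq:first-image-bound} forces equality in both steps.

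\begin{itemize}
\item Equality in the second step means \(b_i=i\) for every \(i\), since \(b_i\ge i\) holds termwise. So the spectrum is \(\{0,1,\ldots,\ell-1\}\).
\item Equality in the first step means every flat outside the chosen representatives has character zero. Since each nonzero value \(1,\ldots,\ell-1\) already appears on its representative, each of these values occurs exactly once. The value \(0\) occurs on its own representative and on all \(M_r-\ell\) remaining flats, giving \(M_r-\ell+1\) occurrences in total.
\end{itemize}

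This is exactly the stated multiset of character values. The choice of representatives does not matter, because the resulting condition is symmetric in the flats.

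Conversely, if the multiset of characters is as displayed, then summing the characters gives \(\sum\Char_{\mathcal M}(\Lambda)=\binom{\ell}{2}\). Combined with the first-moment identity \(\sum\Char_{\mathcal M}(\Lambda)=NB_r\), this yields equality in \eqref{eq:first-image-bound}.

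The only point needing care is the bookkeeping that forces zero to appear with the stated multiplicity rather than some positive value repeating. I expect no real obstacle: once \(b_i=i\), any surplus flat with a positive character would make the first inequality strict.
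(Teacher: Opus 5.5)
Your proposal is correct and follows essentially the same route as the paper: specialize the moment identity to \(t=1\) to get \(\sum_\Lambda \Char_{\mathcal M}(\Lambda)=NB_r\), bound it below by \(\binom{\ell}{2}\) using one representative flat per value with \(b_i\ge i\), and read off the equality case from tightness of both steps (\(b_i=i\) and all non-representative flats having character zero), with the converse by summing the displayed multiset. Your explicit remark that every singleton has rank one, so the upper inequality in \eqref{eq:moment-image-bound} is an equality at \(t=1\), is a correct justification that the paper leaves implicit.
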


\begin{proof}
Setting \(t=1\) in Theorem~\ref{thm:intro-moments} gives the exact
first-moment identity
\begin{equation}\label{eq:first-moment-exact}
\sum_{\Lambda\in\mathcal F_r(\Pi)}\Char_{\mathcal M}(\Lambda)=NB_r.
\end{equation}
Geometrically, this simply counts incident pairs consisting of a labeled point occurrence of \(\mathcal M\) and a codimension-\(r\) flat containing it: every occurrence lies on exactly \(B_r\) such flats.
Let \(0\le b_0<\cdots<b_{\ell-1}\) be the distinct character values, where \(\ell=\ell_r(\mathcal M)\). Choosing one flat for each value and using \(b_i\ge i\), the contribution from these \(\ell\) flats is at least
\(0+1+\cdots+(\ell-1)=\binom{\ell}{2}\).
Every other flat has nonnegative character, and therefore \eqref{eq:first-image-bound} follows from \eqref{eq:first-moment-exact}.

Suppose equality holds. Then no loss can occur in either step of the preceding argument. Thus \(b_i=i\) for every \(i\), and every flat not chosen to represent one of the distinct values must have character zero. Consequently, each positive value \(1,\ldots,\ell-1\) occurs exactly once, while the remaining \(M_r-\ell+1\) flats all have character zero. Conversely, this character-value distribution has total first moment \(\binom{\ell}{2}\), so \eqref{eq:first-moment-exact} implies equality in \eqref{eq:first-image-bound}.
\end{proof}

Since every system admissible for the budget \(n\) has size \(N\le n\), Corollary~\ref{cor:first-moment-rigidity} yields
\begin{equation}\label{eq:explicit-first-moment-bound}
L_r(n,k,q)\le\min\left\{M_r,\,n-r+1,\,
\left\lfloor\frac{1+\sqrt{1+8nB_r}}{2}\right\rfloor\right\}.
\end{equation}
In particular, if \(\mathcal M\) is an \(r\)-FCS projective system of size \(N\), then
\begin{equation}\label{eq:complete-first-moment-necessary}
\binom{N-r+1}{2}\le NB_r.
\end{equation}

The second moment refines this information by detecting repeated projective points. Define the number of parallel pairs of occurrences by
\[
P_2(\mathcal M)
=
\sum_{P\in\Pi}\binom{m(P)}2.
\]

\begin{proposition}\label{prop:parallelism-certificate}
Let \(1\le r\le k-1\), and let \(\mathcal M\) be a projective
\([N,k]\) system. Then
\begin{equation}\label{eq:second-moment-parallel}
\sum_{\Lambda\in\mathcal F_r(\Pi)}\binom{\Char_{\mathcal M}(\Lambda)}2
=B_r^{(2)}\binom N2+\bigl(B_r-B_r^{(2)}\bigr)P_2(\mathcal M).
\end{equation}
Consequently,
\begin{equation}\label{eq:parallel-lower}
P_2(\mathcal M)\ge\max\left\{0,\,\left\lceil\frac{\binom{\ell_r(\mathcal M)}{3}-B_r^{(2)}\binom{N}{2}}{B_r-B_r^{(2)}}\right\rceil\right\}.
\end{equation}
\end{proposition}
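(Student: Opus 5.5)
The plan is to specialize the moment identity \eqref{eq:binomial-moments} of Theorem~\ref{thm:intro-moments} to \(t=2\) and then sort the \(2\)-subsets \(J=\{i,j\}\subseteq[N]\) by their rank. Since no occurrence is a loop, \(\rho_{\mathcal M}(J)\in\{1,2\}\). The rank is \(1\) exactly when the occurrences \(P_i\) and \(P_j\) are supported at the same projective point, that is, when they form a parallel pair. Otherwise the rank is \(2\), and the two occurrences span a projective line.

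By the definition of \(P_2(\mathcal M)\), the number of rank-one pairs is \(\sum_{P}\binom{m(P)}{2}=P_2(\mathcal M)\). The number of rank-two pairs is therefore \(\binom N2-P_2(\mathcal M)\). A rank-one pair contributes \(\qbinom{k-1}{r}=B_r\), and a rank-two pair contributes \(\qbinom{k-2}{r}=B_r^{(2)}\). Substituting into \eqref{eq:binomial-moments} gives
\[
B_r P_2(\mathcal M)+B_r^{(2)}\Bigl(\tbinom N2-P_2(\mathcal M)\Bigr),
\]
which rearranges to \eqref{eq:second-moment-parallel}. The convention \(\qbinom{k-2}{r}=0\) when \(r=k-1\) keeps the formula valid in that case as well.

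For \eqref{eq:parallel-lower}, combine this with the left inequality in \eqref{eq:moment-image-bound} at \(t=2\), which reads \(\binom{\ell_r(\mathcal M)}{3}\le\sum_\Lambda\binom{\Char_{\mathcal M}(\Lambda)}{2}\). This yields
\[
\binom{\ell_r(\mathcal M)}{3}-B_r^{(2)}\binom N2\le\bigl(B_r-B_r^{(2)}\bigr)P_2(\mathcal M).
\]
Dividing is legitimate only if \(B_r-B_r^{(2)}>0\). This holds because \(B_r-B_r^{(2)}=q^{k-1-r}\qbinom{k-2}{r-1}\) by the \(q\)-Pascal rule, and \(1\le r\le k-1\) makes this positive. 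Since \(P_2(\mathcal M)\) is a nonnegative integer, we may take the ceiling and the maximum with \(0\). No step is a real obstacle; the only care needed is the positivity of the denominator and the degenerate case \(r=k-1\).
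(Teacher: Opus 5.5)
Your proposal is correct and follows the paper's proof: you specialize the moment identity to \(t=2\), split pairs into \(P_2(\mathcal M)\) rank-one pairs (each in \(B_r\) flats) and rank-two pairs (each in \(B_r^{(2)}\) flats), and conclude via the \(\binom{\ell_r}{3}\) lower bound, the positivity \(B_r-B_r^{(2)}=q^{k-r-1}\qbinom{k-2}{r-1}>0\), and integrality of \(P_2(\mathcal M)\). The only cosmetic difference is that you cite the left inequality of \eqref{eq:moment-image-bound} directly instead of repeating the one-flat-per-value argument.
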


\begin{proof}
Apply Theorem~\ref{thm:intro-moments} with \(t=2\). A pair of labeled occurrences has only two possible ranks. It has rank one exactly when the two occurrences represent the same projective point. There are \(P_2(\mathcal M)\) such pairs, and each is contained in exactly \(B_r\) codimension-\(r\) flats.
Every remaining pair consists of two distinct projective points and therefore spans a projective line. Such a line is contained in exactly \(B_r^{(2)}\) members of \(\mathcal F_r(\Pi)\). Since there are \(\binom N2-P_2(\mathcal M)\) rank-two pairs, separating the two rank strata in the moment identity gives \eqref{eq:second-moment-parallel}.

To derive the lower bound on \(P_2(\mathcal M)\), again write \(0\le b_0<\cdots<b_{\ell-1}\) for the distinct character values. One representative flat for each value contributes at least
\(\sum_{i=0}^{\ell-1}\binom{i}{2}=\binom{\ell}{3}\) 
to the second moment. Substitution into \eqref{eq:second-moment-parallel} gives a lower bound for the parallel pair count. Finally, the Gaussian recurrence gives
\[B_r-B_r^{(2)}=q^{k-r-1}\qbinom{k-2}{r-1}>0,\]
so division is legitimate. Since \(P_2(\mathcal M)\) is an integer, taking the ceiling and combining with the trivial inequality \(P_2(\mathcal M)\ge0\) proves \eqref{eq:parallel-lower}.
\end{proof}

For a simple projective system \(P_2(\mathcal M)=0\). Hence
\begin{equation}\label{eq:projective-second-moment}
\binom{\ell_r(\mathcal M)}{3}\le B_r^{(2)}\binom{N}{2}.
\end{equation}
Thus the second moment gives a quantitative form of the following phenomenon: once the character spectrum is sufficiently large, repeated projective points are unavoidable.
The moment identities are global: they control how the total character mass is distributed over the Grassmannian, but they do not locate that mass. We now introduce a local concentration parameter.

For \(0\le s\le k\), define the \emph{maximum rank-\(s\) character} of \(\mathcal M\) by
\[\alpha_s(\mathcal M)=\max\{\Char_{\mathcal M}(\PG(U)):U\in\Gr_q(s,k)\}.\]
We set \(\alpha_0(\mathcal M)=0\). This quantity is invariant under projective equivalence. When \(s=k-r\), it is precisely the maximum character attained on the codimension-\(r\) flats.

For a projective \([N,k]\) system and \(s=k-r\), where \(1\le r\le k-1\), the definitions give
\begin{equation}\label{eq:dr-flat-density}
d_r(\mathcal M)=N-\alpha_s(\mathcal M).
\end{equation}
Indeed, minimizing \(N-\Char_{\mathcal M}(\Lambda)\) is equivalent to maximizing \(\Char_{\mathcal M}(\Lambda)\). Since each character is an integer between \(0\) and \(\alpha_s(\mathcal M)\), we also have \(\Omega_r(\mathcal M)\subseteq\{N-\alpha_s(\mathcal M),\ldots,N\}\) and \(\ell_r(\mathcal M)\le\alpha_s(\mathcal M)+1.\)
The two extremal classes impose immediate conditions on this local parameter. If \(\mathcal M\) is \(r\)-FCS, then
\begin{equation}\label{eq:complete-flat-character-equality}
\alpha_{k-r}(\mathcal M)=N-r.
\end{equation}
If \(\mathcal M\) is \(r\)-MCS, then its character spectrum contains \(M_r\) distinct nonnegative integers, so necessarily
\begin{equation}\label{eq:injective-flat-character-lower}
\alpha_{k-r}(\mathcal M)\ge M_r-1.
\end{equation}

There is also a lower bound on the same quantity coming from the first moment. Since the average character over the \(M_r\) codimension-\(r\) flats is \(NB_r/M_r\), we obtain
\begin{equation}\label{eq:average-flat-density-bridge}
\alpha_{k-r}(\mathcal M)\ge\left\lceil\frac{NB_r}{M_r}\right\rceil.
\end{equation}
Together with \eqref{eq:dr-flat-density}, this gives
\(d_r(\mathcal M)\le N-\left\lceil\frac{NB_r}{M_r}\right\rceil.\)
Thus the first moment forces some concentration of the multiset inside a codimension-\(r\) flat. We next show that matroidal restrictions can, in the opposite direction, prevent such concentration.

For integers \(s\ge1\) and \(\delta\ge3\), let \(\operatorname{ex}_q(s,\delta)\) denote the maximum number of elements of a \(q\)-representable matroid of rank at most \(s\) and girth at least \(\delta\). This maximum is finite. The girth condition excludes loops and parallel pairs, so any such representation can be identified with a set of distinct points of \(\PG(s-1,q)\). In particular, \(\operatorname{ex}_q(s,\delta)\le\theta_q(s-1)\).

\begin{proposition}\label{prop:matroid-girth}
Let \(\mathcal M\) be a projective \([N,k]\) system. Let \(1\le r\le k-1\), put \(s=k-r\), and let \(\delta\ge3\) be an integer. Suppose that \(\gamma(\mathcal M)\ge\delta\). Then
\[\alpha_s(\mathcal M)\le\operatorname{ex}_q(s,\delta)~{\rm and}~
\ell_r(\mathcal M)\le\operatorname{ex}_q(s,\delta)+1.\]
\end{proposition}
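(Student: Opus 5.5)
The plan is to bound the character of an arbitrary codimension-\(r\) flat directly by restricting the represented matroid to that flat. Fix \(U\in\Gr_q(s,k)\) with \(s=k-r\), and let \(J_U\subseteq[N]\) be the set of labels of the point occurrences lying in \(\PG(U)\). Then \(|J_U|=\Char_{\mathcal M}(\PG(U))\). The restriction of the matroid of \(\mathcal M\) to \(J_U\) is again \(q\)-representable: take the columns \(d_j\), \(j\in J_U\), written in coordinates with respect to a basis of \(U\), giving an \(s\times|J_U|\) matrix. Its rank is \(\rho_{\mathcal M}(J_U)\le\dim U=s\).

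Next I will check that girth does not decrease under restriction. Every circuit of \(\mathcal M|_{J_U}\) is a minimal dependent subset of \(J_U\), and hence a circuit of the full matroid. So \(\gamma(\mathcal M|_{J_U})\ge\gamma(\mathcal M)\ge\delta\); if \(J_U\) has no circuits, the girth is \(\infty\) and the condition still holds. Thus \(\mathcal M|_{J_U}\) is a \(q\)-representable matroid of rank at most \(s\) and girth at least \(\delta\). The definition of \(\operatorname{ex}_q(s,\delta)\) then gives \(|J_U|\le\operatorname{ex}_q(s,\delta)\). The empty case \(J_U=\varnothing\) is trivial.

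Maximizing over \(U\) yields \(\alpha_s(\mathcal M)\le\operatorname{ex}_q(s,\delta)\). The second inequality follows from the observation recorded after \eqref{eq:dr-flat-density}: every character on \(\mathcal F_r(\Pi)\) is an integer in \(\{0,\ldots,\alpha_s(\mathcal M)\}\), so \(\ell_r(\mathcal M)\le\alpha_s(\mathcal M)+1\le\operatorname{ex}_q(s,\delta)+1\).

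The only point needing care is that the ground set of \(\operatorname{ex}_q\) counts matroid elements, not points. Since \(\delta\ge3\) forbids parallel pairs, distinct labels in \(J_U\) are distinct projective points. The count \(|J_U|\) of labeled occurrences therefore matches the matroid size exactly, with no multiplicity issue.
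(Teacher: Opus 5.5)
Your proposal is correct and follows essentially the same argument as the paper. You restrict the represented matroid to the occurrences in \(\PG(U)\), use that the rank is at most \(s\) and that circuits of a restriction are circuits of the whole matroid, bound the character by \(\operatorname{ex}_q(s,\delta)\), and then apply \(\ell_r(\mathcal M)\le\alpha_s(\mathcal M)+1\).
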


\begin{proof}
Fix \(U\in\Gr_q(s,k)\), and restrict a matrix realization of \(\mathcal M\) to those labeled columns whose projective points lie in \(\PG(U)\). Their span is contained in \(U\), so the resulting \(q\)-representable matroid has rank at most \(s\).
Restriction cannot create a new circuit: every circuit among the restricted columns is already a circuit of the matroid represented by the full realization of \(\mathcal M\). Hence the restricted matroid has girth at least \(\delta\). The number of its elements, counted as labeled occurrences, is exactly \(\Char_{\mathcal M}(\PG(U))\). 
By the definition of \(\operatorname{ex}_q(s,\delta)\), we have
\(\Char_{\mathcal M}(\PG(U))\le\operatorname{ex}_q(s,\delta).\)
Since this holds for every \(U\in\Gr_q(s,k)\), maximizing over \(U\) gives the first assertion. The second follows from \(\ell_r(\mathcal M)\le\alpha_s(\mathcal M)+1\).
\end{proof}

Under the hypotheses of Proposition~\ref{prop:matroid-girth}, \eqref{eq:complete-flat-character-equality} gives
\begin{equation}\label{eq:girth-extremal-consequences}
\mathcal M\text{ is }r\text{-FCS}
\quad\Longrightarrow\quad
N\le\operatorname{ex}_q(k-r,\delta)+r.
\end{equation}

Two immediate consequences illustrate the local compression mechanism. If \(\mathcal M\) is simple, then an \(s\)-dimensional vector subspace contains at most \(\theta_q(s-1)\) points of \(\mathcal M\). Hence, with \(s=k-r\),
\begin{equation}\label{eq:projective-compression}
\ell_r(\mathcal M)\le\theta_q(k-r-1)+1.
\end{equation}
In particular, no simple projective system is \(r\)-MCS for \(1\le r\le k-1\), because
\[M_r=\qbinom{k}{s}\ge\qbinom{s+1}{s}=\theta_q(s)>\theta_q(s-1)+1.\]
Consequently, every \(r\)-MCS projective system in this range has \(\gamma(\mathcal M)=2\). Indeed, it has no loops and must contain a repeated projective point, hence a parallel pair. Thus girth assumptions of at least three restrict general spectra and the FCS class, but cannot provide additional conditions on MCS systems.

At the other extreme, if \(\gamma(\mathcal M)\ge s+2\), then no \(s\)-dimensional vector subspace can contain \(s+1\) occurrences: any such collection would be dependent and hence contain a circuit of size at most \(s+1\). Consequently,
\[\alpha_s(\mathcal M)\le s~~{\rm and}~~\ell_r(\mathcal M)\le s+1=k-r+1.\]
This applies, for instance, to projective systems whose represented matroid is uniform of rank \(k\).

The intermediate girth condition \(\gamma(\mathcal M)\ge s+1\) is naturally related to finite projective arcs. For \(s\ge2\), an arc in \(\PG(s-1,q)\) is a set of at least \(s\) points every \(s\) of which are projectively independent. Let \(a_q(s)\) be the maximum size of such an arc. The classical bound is \(a_q(s)\le q+s-1\); see \cite[Section~5]{BallLavrauw2019}.

\begin{corollary}\label{cor:arc-mds-bound}
Let \(\mathcal M\) be a projective \([N,k]\) system, let \(1\le r\le k-2\), and put \(s=k-r\). If
\(\gamma(\mathcal M)\ge s+1\), then
\[\ell_r(\mathcal M)\le a_q(s)+1\le q+s.\]
\end{corollary}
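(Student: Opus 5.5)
The plan is to combine the local compression mechanism of Proposition~\ref{prop:matroid-girth} with the arc bound. First we fix \(U\in\Gr_q(s,k)\). We restrict a matrix realization of \(\mathcal M\) to the labeled occurrences lying in \(\PG(U)\), exactly as in the proof of Proposition~\ref{prop:matroid-girth}. The restricted matroid has rank at most \(s\). Its girth is at least \(\gamma(\mathcal M)\ge s+1\ge3\), because restriction creates no new circuits. The hypothesis \(r\le k-2\) gives \(s\ge2\), which is what allows arcs to enter.

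Since the girth is at least \(3\), there are no loops and no parallel pairs. Hence the occurrences in \(\PG(U)\) are distinct projective points of \(\PG(U)\cong\PG(s-1,q)\), and their number is \(\Char_{\mathcal M}(\PG(U))\). The girth condition \(\ge s+1\) says that every set of at most \(s\) of these points is independent. In particular, any \(s\) of them are projectively independent in \(\PG(U)\).

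We then split according to the number \(c=\Char_{\mathcal M}(\PG(U))\). If \(c\ge s\), the points form an arc in \(\PG(s-1,q)\) in the sense defined above, so \(c\le a_q(s)\). If \(c<s\), we still have \(c\le s-1<a_q(s)\). This uses \(a_q(s)\ge s\), which holds because a basis of \(\F_q^s\) gives an arc of size \(s\); in fact a frame of \(s+1\) points gives \(a_q(s)\ge s+1\). Taking the maximum over \(U\) yields \(\alpha_s(\mathcal M)\le a_q(s)\). The inequality \(\ell_r(\mathcal M)\le\alpha_s(\mathcal M)+1\), recorded after \eqref{eq:dr-flat-density}, then gives \(\ell_r(\mathcal M)\le a_q(s)+1\).

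Equivalently, one may observe that \(\operatorname{ex}_q(s,s+1)\le\max\{s-1,a_q(s)\}=a_q(s)\) and invoke Proposition~\ref{prop:matroid-girth} directly with \(\delta=s+1\). Finally, the classical bound \(a_q(s)\le q+s-1\) from \cite[Section~5]{BallLavrauw2019} gives \(a_q(s)+1\le q+s\).

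The only delicate point is the identification of a girth-\((s+1)\) restriction with an arc. This requires care with the degenerate case of fewer than \(s\) points, which the definition of an arc excludes, and with verifying that \(a_q(s)\) is well defined and at least \(s\). Both are handled as described above, so no real obstacle remains.
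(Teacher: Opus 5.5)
Your proof is correct and follows essentially the same route as the paper. For each \(U\in\Gr_q(s,k)\), the girth hypothesis makes the occurrences in \(\PG(U)\) either fewer than \(s\) points or an arc, which gives \(\alpha_s(\mathcal M)\le a_q(s)\); you then conclude via \(\ell_r(\mathcal M)\le\alpha_s(\mathcal M)+1\) and the classical bound \(a_q(s)\le q+s-1\), and your reformulation through \(\operatorname{ex}_q(s,s+1)\) and Proposition~\ref{prop:matroid-girth} is only a repackaging of that same argument.
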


\begin{proof}
Fix \(U\in\Gr_q(s,k)\). The girth assumption excludes parallel pairs, so the occurrences supported on \(\PG(U)\) are distinct projective points. If there are fewer than \(s\) of them, their number is at most \(s-1<a_q(s)\), since \(s\) independent points form an arc.
Otherwise every \(s\) of these points are independent. A dependent \(s\)-subset would contain a circuit of size at most \(s\), contrary to \(\gamma(\mathcal M)\ge s+1\). The points therefore form an arc in \(\PG(U)\cong\PG(s-1,q)\), and again their number is at most \(a_q(s)\).
Thus \(\Char_{\mathcal M}(\PG(U))\le a_q(s)\) for every \(U\). Taking the maximum and using \(\ell_r(\mathcal M)\le\alpha_s(\mathcal M)+1\) gives \(\ell_r(\mathcal M)\le a_q(s)+1\). Then the cited classical arc bound yields \(a_q(s)+1\le q+s\).
\end{proof}

Consequently, under the hypotheses of Corollary~\ref{cor:arc-mds-bound}, an \(r\)-FCS system satisfies \(N-r+1=\ell_r(\mathcal M)\le q+k-r\), and hence \(N\le q+k-1\).

\section{The \texorpdfstring{\(r\)-MCS}{r-MCS} threshold}\label{sec:mcs-threshold}
Throughout this section, assume \(1\le r\le k-1\). We study the minimum total multiplicity \(\nu_r(k,q)\) required to distinguish every codimension-\(r\) flat. Subsection~\ref{subsec:threshold-lower} derives the first-moment lower bound, describes its endpoint defects, and tests their compatibility with the second moment. Subsection~\ref{subsec:threshold-upper} uses additive labelings to construct \(r\)-MCS systems and proves Theorem~\ref{thm:intro-threshold}. In particular, the first subsection gives necessary conditions on the character values, while the second produces actual systems with a controlled total multiplicity.

\subsection{Incidence lower bounds and endpoint defects}\label{subsec:threshold-lower}

Let \(\mathcal M\) be an \(r\)-MCS projective system. Since the character map is injective on the \(M_r\) members of \(\mathcal F_r(\Pi)\), its character values are \(M_r\) distinct nonnegative integers. Their minimum possible total is therefore \(H_r\). The following proposition refines this elementary observation by retaining the complete defect from the minimum.

\begin{proposition}\label{prop:mcs-first-moment}
Let \(\mathcal M\) be an \(r\)-MCS projective \([N,k]\) system. Write its character values in increasing order as \(0\le b_0<b_1<\cdots<b_{M_r-1}.\)
Set \(\lambda=b_0\) and \(\varepsilon_i=b_i-\lambda-i\) for \(0\le i\le M_r-1\).
Then
\(0=\varepsilon_0\le\varepsilon_1\le\cdots\le\varepsilon_{M_r-1}\),
and
\begin{equation}\label{eq:first-defect}
NB_r=H_r+M_r\lambda+\sum_{i=0}^{M_r-1}\varepsilon_i.
\end{equation}
Consequently,
\begin{equation}\label{eq:incidence-lower}
N\ge N_0:=\left\lceil\frac{H_r}{B_r}\right\rceil.
\end{equation}
If \(N=N_0\) and \(R_r=B_rN_0-H_r,\)
then
\(\lambda=0\), \(\sum_{i=0}^{M_r-1}\varepsilon_i=R_r\), and \(0\le R_r<B_r.\)
\end{proposition}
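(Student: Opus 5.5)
The plan is to combine the exact first-moment identity \eqref{eq:first-moment-exact} from Corollary~\ref{cor:first-moment-rigidity}, namely \(\sum_{\Lambda\in\mathcal F_r(\Pi)}\Char_{\mathcal M}(\Lambda)=NB_r\), with the injectivity of the character map. Injectivity means the \(M_r\) flats carry exactly the \(M_r\) values \(b_0<\cdots<b_{M_r-1}\), each once, so the left-hand side is precisely \(\sum_i b_i\).

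First I verify the monotonicity of the defects. By definition \(\varepsilon_0=b_0-\lambda=0\). Since the \(b_i\) are strictly increasing integers, \(b_{i+1}\ge b_i+1\), and hence
\[
\varepsilon_{i+1}-\varepsilon_i=b_{i+1}-b_i-1\ge0 .
\]

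Next I substitute \(b_i=\lambda+i+\varepsilon_i\) into the sum and use \(\sum_{i=0}^{M_r-1}i=\binom{M_r}{2}=H_r\). This gives
\[
NB_r=\sum_i b_i=M_r\lambda+H_r+\sum_i\varepsilon_i ,
\]
which is \eqref{eq:first-defect}.

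The lower bound then follows from positivity. Since \(\lambda\ge0\) and every \(\varepsilon_i\ge0\), we get \(NB_r\ge H_r\). As \(B_r>0\) for \(r\le k-1\), this gives \(N\ge H_r/B_r\), and because \(N\) is an integer, \(N\ge N_0\).

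For the endpoint case \(N=N_0\), the identity becomes
\[
R_r=B_rN_0-H_r=M_r\lambda+\sum_i\varepsilon_i .
\]
The definition of the ceiling gives \(B_r(N_0-1)<H_r\le B_rN_0\), so \(0\le R_r<B_r\). I expect ruling out \(\lambda>0\) to be the only step needing care. If \(\lambda\ge1\), then \(R_r\ge M_r\). But \(M_r>B_r\), since a codimension-\(r\) flat avoiding a given point exists: for \(r\le k-1\), \(M_r-B_r=q^{k-r}\qbinom{k-1}{r-1}>0\) by the Gaussian recurrence. This contradicts \(R_r<B_r\). Hence \(\lambda=0\), and then \(\sum_i\varepsilon_i=R_r\).
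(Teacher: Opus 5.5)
Your proposal is correct and follows the paper's proof essentially step by step. It combines the first-moment identity with injectivity, substitutes \(b_i=\lambda+i+\varepsilon_i\), uses positivity of the defect terms, and applies the ceiling argument at the endpoint. The only difference is cosmetic: you justify \(M_r>B_r\) via the \(q\)-Pascal recurrence \(M_r-B_r=q^{k-r}\qbinom{k-1}{r-1}>0\), while the paper uses the ratio \(M_r/B_r=(q^k-1)/(q^{k-r}-1)>1\). Both justifications are valid.
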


\begin{proof}
Because the \(b_i\) are strictly increasing integers, consecutive values differ by at least one. Hence \(\varepsilon_{i+1}-\varepsilon_i=b_{i+1}-b_i-1\ge0.\)
Since \(\varepsilon_0=b_0-\lambda=0\), the sequence \((\varepsilon_i)\) is nonnegative and nondecreasing. Thus \(\lambda\) measures a common translation of the character-value set, while the \(\varepsilon_i\) measure the accumulated gaps beyond the smallest possible spacing by consecutive integers.

We now use the first-moment identity \eqref{eq:first-moment-exact}. Since \(\mathcal M\) is \(r\)-MCS, each member of \(\mathcal F_r(\Pi)\) has a distinct character, and therefore each of the values \(b_0,\ldots,b_{M_r-1}\) occurs exactly once. Hence \(NB_r=\sum_{i=0}^{M_r-1}b_i.\)
Substituting \(b_i=\lambda+i+\varepsilon_i\), the contribution from the consecutive part is
\(\sum_{i=0}^{M_r-1}i=\binom{M_r}{2}=H_r.\)
This gives \eqref{eq:first-defect}.
Both defect terms on the right-hand side of \eqref{eq:first-defect} are nonnegative, so \(NB_r\ge H_r\). Since \(N\) is an integer, this proves \eqref{eq:incidence-lower}.

Suppose now that \(N=N_0\). By the definition of the ceiling, \(R_r=B_rN_0-H_r\) satisfies \(0\le R_r<B_r\), and \eqref{eq:first-defect} becomes
\(R_r=M_r\lambda+\sum_{i=0}^{M_r-1}\varepsilon_i.\)
The Gaussian coefficients satisfy
\(\frac{M_r}{B_r}=\frac{q^k-1}{q^{k-r}-1}>1,\)
and hence \(M_r>B_r>R_r\). If \(\lambda\ge1\), then already \(M_r\lambda>R_r\), which is impossible. Therefore \(\lambda=0\), and the remaining defect is entirely carried by the gaps:
\(\sum_{i=0}^{M_r-1}\varepsilon_i=R_r.\)
This proves the endpoint assertions.
\end{proof}

The first-moment lower bound always dominates the interval bound. More precisely,
\begin{equation}\label{eq:lower-bound-comparison}
\frac{H_r}{B_r}\ge M_r+r-1,\quad
\frac{H_r}{B_r}=M_r+r-1
\quad\Longleftrightarrow\quad(q,r)=(2,1).
\end{equation}
To verify this, first note that \(M_r-1\ge q+\cdots+q^r\ge2r\). Unless \((q,r)=(2,1)\), we have \(q^r\ge3\), and hence
\[\frac{M_r}{B_r}=\frac{q^k-1}{q^{k-r}-1}>q^r\ge3\ge2+\frac{2r}{M_r-1}.\]
Multiplying by \((M_r-1)/2\) gives the strict inequality in \eqref{eq:lower-bound-comparison}. If \((q,r)=(2,1)\), then \(M_1-1=2B_1\), so \(H_1/B_1=M_1\), giving equality. Thus \(N_0=\lceil H_r/B_r\rceil\) is the relevant first-moment endpoint throughout the nontrivial range. For \(r=1\), the numerical bound \(N\ge\lceil H_r/B_r\rceil\) recovers the length bound for maximum weight spectrum codes in \cite[Lemma~5.1]{AN2019}. In this case, identity \eqref{eq:first-defect} is equivalent to \cite[Theorem~12]{MK2020}: the defect \(M_1\lambda+\sum_i\varepsilon_i\) is the \emph{spread} of the corresponding maximum weight spectrum code introduced there.
The defect identity retains the translation and gap terms for arbitrary codimension.
At the first-moment endpoint, the character-value set is therefore highly constrained: its smallest value must be zero, and the entire departure from the consecutive set
\(\{0,1,\ldots,M_r-1\}\) is encoded by a nondecreasing sequence of nonnegative integers whose total is \(R_r\). The second moment provides a further arithmetic test for these possible gap profiles.

\begin{proposition}\label{prop:second-moment-obstruction}
Let \(\mathcal M\) be an \(r\)-MCS projective system of total multiplicity \(N\), with character values \(0\le b_0<b_1<\cdots<b_{M_r-1}.\)
Then
\begin{equation}\label{eq:second-obstruction-general}
\bigl(B_r-B_r^{(2)}\bigr)P_2(\mathcal M)=
\sum_{i=0}^{M_r-1}\binom{b_i}{2}-B_r^{(2)}\binom{N}{2}.
\end{equation}
In particular, the right-hand side is a nonnegative integer multiple of \(B_r-B_r^{(2)}\).
\end{proposition}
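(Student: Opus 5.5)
This is a direct specialization of Proposition~\ref{prop:parallelism-certificate}, so the plan is to read off \eqref{eq:second-obstruction-general} from the second-moment identity \eqref{eq:second-moment-parallel}. That identity already states
\[
\sum_{\Lambda\in\mathcal F_r(\Pi)}\binom{\Char_{\mathcal M}(\Lambda)}2
=B_r^{(2)}\binom N2+\bigl(B_r-B_r^{(2)}\bigr)P_2(\mathcal M),
\]
and it holds for every projective \([N,k]\) system with \(1\le r\le k-1\). It comes from the case \(t=2\) of Theorem~\ref{thm:intro-moments}, after splitting pairs of occurrences into rank-one pairs (parallel pairs, counted by \(P_2(\mathcal M)\), each lying in \(B_r\) flats) and rank-two pairs (each lying in \(B_r^{(2)}\) flats).

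The only input specific to the \(r\)-MCS hypothesis is the rewriting of the left-hand side. Since \(\Char_{\mathcal M}\) is injective on \(\mathcal F_r(\Pi)\), the map \(\Lambda\mapsto\Char_{\mathcal M}(\Lambda)\) is a bijection from \(\mathcal F_r(\Pi)\) onto \(\{b_0,\ldots,b_{M_r-1}\}\). Hence each value occurs exactly once, as in the proof of Proposition~\ref{prop:mcs-first-moment}, and the sum over flats becomes \(\sum_{i=0}^{M_r-1}\binom{b_i}{2}\). Subtracting \(B_r^{(2)}\binom N2\) from both sides then gives \eqref{eq:second-obstruction-general} exactly.

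For the final assertion, \(P_2(\mathcal M)=\sum_P\binom{m(P)}2\) is a nonnegative integer. By the Gaussian recurrence recorded in the proof of Proposition~\ref{prop:parallelism-certificate},
\[
B_r-B_r^{(2)}=q^{k-r-1}\qbinom{k-2}{r-1}
\]
is a positive integer. Therefore the right-hand side of \eqref{eq:second-obstruction-general} equals this positive integer times a nonnegative integer, so it is a nonnegative integer multiple of \(B_r-B_r^{(2)}\). There is no real obstacle here. The only point to check is that each character value is counted exactly once, which is precisely the injectivity defining the \(r\)-MCS property.
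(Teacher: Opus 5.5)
Your proposal is correct and follows essentially the same route as the paper. You specialize the second-moment identity \eqref{eq:second-moment-parallel} and use injectivity of the character map to rewrite the left-hand side as \(\sum_i\binom{b_i}{2}\). You then conclude from \(P_2(\mathcal M)\ge0\) and \(B_r-B_r^{(2)}=q^{k-r-1}\qbinom{k-2}{r-1}>0\).
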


\begin{proof}
The second-moment identity \eqref{eq:second-moment-parallel} gives
\[\sum_{\Lambda\in\mathcal F_r(\Pi)}\binom{\Char_{\mathcal M}(\Lambda)}{2}
=B_r^{(2)}\binom{N}{2}+\bigl(B_r-B_r^{(2)}\bigr)P_2(\mathcal M).\]
Because \(\mathcal M\) is \(r\)-MCS, its character map is injective on \(\mathcal F_r(\Pi)\). Thus the characters occurring on the \(M_r\) flats are precisely \(b_0,\ldots,b_{M_r-1}\), each occurring once. The left-hand side of the preceding identity is therefore
\(\sum_{i=0}^{M_r-1}\binom{b_i}{2}.\)
Rearranging proves \eqref{eq:second-obstruction-general}.

The coefficient \(B_r-B_r^{(2)}\) is positive for \(1\le r\le k-1\), and \(P_2(\mathcal M)\) counts unordered pairs of occurrences supported at the same projective point. Hence the right-hand side must be nonnegative and divisible by \(B_r-B_r^{(2)}\).
\end{proof}

Combining the first- and second-moment identities gives an explicit test at the first-moment endpoint.

\begin{corollary}\label{cor:endpoint-integrality}
Suppose that an \(r\)-MCS projective system has size \(N=N_0\). Then its gap profile satisfies
\(0=\varepsilon_0\le\varepsilon_1\le\cdots\le\varepsilon_{M_r-1}\),
\(\sum_{i=0}^{M_r-1}\varepsilon_i=R_r,\) and
\begin{equation}\label{eq:second-obstruction-profile}
P_2(\mathcal M)=\frac{\displaystyle\binom{M_r}{3}+\sum_{i=0}^{M_r-1}
\left(i\varepsilon_i+\binom{\varepsilon_i}{2}\right)
-B_r^{(2)}\binom{N_0}{2}}{B_r-B_r^{(2)}}.
\end{equation}
In particular, the expression on the right-hand side must be a nonnegative integer.
\end{corollary}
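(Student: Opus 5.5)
The plan is to combine Proposition~\ref{prop:mcs-first-moment} with Proposition~\ref{prop:second-moment-obstruction} and then rewrite the second moment in terms of the gap profile.

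\emph{Step 1: the endpoint gap profile.} Apply Proposition~\ref{prop:mcs-first-moment} with \(N=N_0\). It gives \(\lambda=0\), so \(b_i=i+\varepsilon_i\). It also gives the monotonicity \(0=\varepsilon_0\le\cdots\le\varepsilon_{M_r-1}\) and \(\sum_i\varepsilon_i=R_r\). This settles the first two assertions with no extra work.

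\emph{Step 2: expand the second moment.} Substitute \(b_i=i+\varepsilon_i\) into \eqref{eq:second-obstruction-general}. The key tool is the Vandermonde-type identity
\[
\binom{i+\varepsilon_i}{2}=\binom{i}{2}+i\varepsilon_i+\binom{\varepsilon_i}{2}.
\]
Summing over \(0\le i\le M_r-1\) and using the hockey-stick identity \(\sum_{i=0}^{M_r-1}\binom i2=\binom{M_r}{3}\) turns the sum \(\sum_i\binom{b_i}{2}\) into the numerator of \eqref{eq:second-obstruction-profile}, after subtracting \(B_r^{(2)}\binom{N_0}{2}\).

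\emph{Step 3: divide.} Divide by \(B_r-B_r^{(2)}\), which is positive by the Gaussian recurrence noted in the proof of Proposition~\ref{prop:parallelism-certificate}. This yields the displayed formula for \(P_2(\mathcal M)\). The final assertion follows because \(P_2(\mathcal M)=\sum_P\binom{m(P)}2\) is a nonnegative integer.

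No step is a real obstacle. The only point needing care is the binomial expansion in Step 2. It is routine to check directly, since \((i+e)(i+e-1)/2=i(i-1)/2+ie+e(e-1)/2\).
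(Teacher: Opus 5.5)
Your proposal is correct and follows essentially the same route as the paper. The endpoint case of Proposition~\ref{prop:mcs-first-moment} gives \(\lambda=0\), the gap conditions, and \(b_i=i+\varepsilon_i\); expanding \(\binom{i+\varepsilon_i}{2}\), summing with the hockey-stick identity, and substituting into Proposition~\ref{prop:second-moment-obstruction} then yields the formula, whose value must be the nonnegative integer \(P_2(\mathcal M)\).
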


\begin{proof}
At the first-moment endpoint, Proposition~\ref{prop:mcs-first-moment} gives
\(b_i=i+\varepsilon_i.\)
Using
\[\binom{i+\varepsilon_i}{2}=\binom{i}{2}+i\varepsilon_i+\binom{\varepsilon_i}{2},\]
we obtain
\[\sum_{i=0}^{M_r-1}\binom{b_i}{2}=
\sum_{i=0}^{M_r-1}\binom{i}{2}+\sum_{i=0}^{M_r-1}
\left(i\varepsilon_i+\binom{\varepsilon_i}{2}\right).\]
The first sum is
\(\sum_{i=0}^{M_r-1}\binom{i}{2}=\binom{M_r}{3}\)
by the hockey-stick identity. Substitution into Proposition~\ref{prop:second-moment-obstruction} yields \eqref{eq:second-obstruction-profile}.
Since the quotient is exactly \(P_2(\mathcal M)\), it must be a nonnegative integer. Thus the second moment imposes simultaneously a sign condition and a divisibility condition on every admissible gap profile.
\end{proof}

When \(R_r\) is small, the first-moment identity leaves only a limited number of nondecreasing gap profiles. Each can then be tested against the arithmetic condition in
\eqref{eq:second-obstruction-profile}. In favorable cases this rules out the first-moment endpoint entirely and improves the lower bound on \(\nu_r(k,q)\).

\begin{example}\label{ex:binary-4-2}
Consider \((q,k,r)=(2,4,2)\). Then \(M_r=35\), \(B_r=7\), \(B_r^{(2)}=1\), and
\(N_0=\left\lceil\frac{\binom{35}{2}}{7}\right\rceil=85.\)
Here \(R_r=0\). Hence, if a \(2\)-MCS system of size \(85\) existed, Proposition~\ref{prop:mcs-first-moment} would force its character values to be exactly \(0,1,\ldots,34.\)
Then the second-moment identity would give
\[P_2(\mathcal M)=\frac{\binom{35}{3}-\binom{85}{2}}{7-1}=\frac{2975}{6},\]
which is impossible. Therefore no \(2\)-MCS projective system of size \(85\) exists, and \(\nu_2(4,2)\ge86.\)
\end{example}

This example illustrates the role of the defect method. The first moment restricts the possible character-value sets to a small family of near-consecutive configurations, while the second moment tests whether such a configuration is compatible with an actual projective multiset. In this example the arithmetic obstruction strictly improves the first-moment bound. In general, passing the sign and divisibility tests is only necessary and does not guarantee that a character distribution is geometrically realizable.

The elementary interval bound has a rigid equality case. We record it here to complete the lower-bound discussion before constructing systems for the upper bounds.

\begin{proposition}\label{prop:threshold-endpoint}
Let \(1\le r\le k-1\). Then \(\nu_r(k,q)=M_r+r-1\) if and only if \((q,r)=(2,1)\).
\end{proposition}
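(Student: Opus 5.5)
The plan is to obtain both directions from the first-moment lower bound of Proposition~\ref{prop:mcs-first-moment} and the comparison \eqref{eq:lower-bound-comparison}. The equality case is then settled by one explicit binary construction.

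\emph{Only if.} Suppose \((q,r)\ne(2,1)\). By \eqref{eq:lower-bound-comparison} we have the strict inequality \(H_r/B_r>M_r+r-1\). The right-hand side is an integer, so \(N_0=\lceil H_r/B_r\rceil\ge M_r+r\). Proposition~\ref{prop:mcs-first-moment} gives \(N\ge N_0\) for every \(r\)-MCS system. Hence \(\nu_r(k,q)\ge M_r+r>M_r+r-1\), and equality cannot hold.

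\emph{If.} Let \((q,r)=(2,1)\), so that \(M_1=2^k-1\) and the target value is \(M_1+r-1=2^k-1\). By \eqref{eq:lower-bound-comparison}, \(H_1/B_1=M_1\), so Proposition~\ref{prop:mcs-first-moment} already gives \(\nu_1(k,2)\ge 2^k-1\). It remains to exhibit a \(1\)-MCS binary system of size exactly \(2^k-1\).

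I take the standard basis points \([e_1],\dots,[e_k]\) of \(\F_2^k\) with multiplicities \(m([e_i])=2^{i-1}\). This multiset spans \(\Pi\), and its total multiplicity is \(\sum_{i=1}^k 2^{i-1}=2^k-1\).

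Every hyperplane has the form \(\PG(x^\perp)\) for a unique nonzero \(x\in\F_2^k\). The point \([e_i]\) lies off this hyperplane exactly when \(x_i=1\). Therefore the complementary character is
\[|\mathcal M|-\Char_{\mathcal M}(\PG(x^\perp))=\sum_{i:\,x_i=1}2^{i-1},\]
which is the integer whose binary expansion is \(x\). By uniqueness of binary expansions, these values are pairwise distinct; in fact \(\Omega_1(\mathcal M)=\{1,\dots,2^k-1\}\). So \(\Char_{\mathcal M}\) is injective on hyperplanes and \(\mathcal M\) is \(1\)-MCS. This gives \(\nu_1(k,2)\le 2^k-1\), hence equality.

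There is no real obstacle here. The one point needing care is that the strict inequality in \eqref{eq:lower-bound-comparison} survives the ceiling, and it does because \(M_r+r-1\) is an integer. It is also reassuring that the construction matches the endpoint rigidity of Proposition~\ref{prop:mcs-first-moment}: its hyperplane characters are exactly \(0,1,\dots,M_1-1\), so \(\lambda=0\) and every gap \(\varepsilon_i\) is zero, as required.
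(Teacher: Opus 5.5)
Your proof is correct and follows essentially the paper's argument: the converse uses the same binary construction with multiplicities \(2^{i-1}\) on a basis, and the forward direction rests on the same arithmetic fact \(M_r/B_r>q^r\ge3\ge2+2r/(M_r-1)\). The only difference is packaging. You invoke the already-established strict comparison \eqref{eq:lower-bound-comparison} together with integrality of \(M_r+r-1\) to get \(\nu_r(k,q)\ge M_r+r\). The paper instead argues that a system of size \(M_r+r-1\) would be simultaneously \(r\)-FCS with character values \(0,\ldots,M_r-1\), and then derives the contradiction from the first-moment identity directly.
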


\begin{proof}
Suppose first that \(\nu_r(k,q)=M_r+r-1\), and let \(\mathcal M\) be a minimum-size \(r\)-MCS projective system. Its \(M_r\) distinct complementary character values all lie in the integer interval \(\{r,r+1,\ldots,M_r+r-1\}\), which itself contains exactly \(M_r\) integers. Hence every value in this interval occurs. Thus \(\mathcal M\) is simultaneously \(r\)-FCS, and its character values are exactly \(0,1,\ldots,M_r-1.\)

Applying the first-moment identity gives \((M_r+r-1)B_r=\binom{M_r}{2}=H_r.\)
Equivalently,
\begin{equation}\label{eq:endpoint-arithmetic}
\frac{M_r}{B_r}=2+\frac{2r}{M_r-1}.
\end{equation}
On the other hand, \(\frac{M_r}{B_r}=\frac{q^k-1}{q^{k-r}-1}>q^r.\)
Since \(k\ge r+1\), monotonicity of Gaussian coefficients gives
\(M_r\ge \qbinom{r+1}{r}=1+q+\cdots+q^r.\)
In particular \(M_r-1\ge2r\), and therefore the right-hand side of \eqref{eq:endpoint-arithmetic} is at most \(3\). It follows that \(q^r<3\), which forces \(q=2\) and \(r=1\).

Conversely, let \(q=2\) and \(r=1\). Choose a vector basis \(e_1,\ldots,e_k\) of \(V\), and assign multiplicity \(2^{i-1}\) to the projective point \([e_i]\). The resulting projective system has total multiplicity \(1+2+\cdots+2^{k-1}=2^k-1=M_1.\)
Every hyperplane of \(\PG(k-1,2)\) is the kernel of a unique nonzero linear functional \(f\in V^\ast\). Its complementary character is \(\sum_{i:f(e_i)=1}2^{i-1}.\)
As \(f\) ranges over the nonzero functionals, the binary vectors \((f(e_1),\ldots,f(e_k))\) range over all nonzero vectors of \(\F_2^k\). Hence the complementary characters are precisely
\(1,2,\ldots,2^k-1.\)
The system is therefore both \(1\)-MCS and \(1\)-FCS, and has size \(M_1=M_1+1-1\).
Thus \(\nu_1(k,2)=M_1\), proving the converse.
\end{proof}

For \(r=1\), the second moment shows that the first-moment endpoint is never attained outside the two classical families.

\begin{corollary}\label{cor:strictly-compact}
Let \(q\ge3\) and \(k\ge3\), and suppose that \(qk\) is even. Then \(q\vartheta_k/2\) is an integer and
\[\nu_1(k,q)\ge\frac{q\vartheta_k}{2}+1.\]
Consequently, \(\nu_1(k,q)=q\vartheta_k/2\) if and only if \(q=2\) or \(k=2\).
\end{corollary}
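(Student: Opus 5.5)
The plan is to specialize the endpoint machinery of Proposition~\ref{prop:mcs-first-moment} and Corollary~\ref{cor:endpoint-integrality} to \(r=1\). Here \(M_1=\vartheta_k\), \(B_1=\vartheta_{k-1}\), \(B_1^{(2)}=\vartheta_{k-2}\) and \(B_1-B_1^{(2)}=q^{k-2}\). The identity \(\vartheta_k-1=q\vartheta_{k-1}\) gives
\[\frac{H_1}{B_1}=\frac{\vartheta_k(\vartheta_k-1)}{2\vartheta_{k-1}}=\frac{q\vartheta_k}{2}.\]
This number is an integer exactly when \(q\vartheta_k\) is even. If \(q\) is even this is automatic. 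If \(q\) is odd, then \(\vartheta_k\equiv k\pmod 2\), so we need \(k\) even. Together these cover the hypothesis that \(qk\) is even. So \(N_0=q\vartheta_k/2\), and the remainder is \(R_1=0\). By Proposition~\ref{prop:mcs-first-moment}, a \(1\)-MCS system of size \(N_0\) would have \(\lambda=0\) and all gaps \(\varepsilon_i=0\). Its hyperplane characters would therefore be exactly \(0,1,\ldots,\vartheta_k-1\), each occurring once.

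Next I would feed this forced profile into \eqref{eq:second-obstruction-profile}, as in Example~\ref{ex:binary-4-2}. This gives
\[
q^{k-2}P_2(\mathcal M)=\binom{\vartheta_k}{3}-\vartheta_{k-2}\binom{q\vartheta_k/2}{2}.
\]
The goal is to show that the right-hand side is not divisible by \(q^{k-2}\) when \(q\ge3\) and \(k\ge3\). Failing that, I would show it is negative; a size comparison (\(\approx\vartheta_k^3/6\) versus \(\approx\vartheta_k^3/8\)) suggests it is positive, so divisibility is the real test.

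To test divisibility, I would rewrite everything in terms of \(T=\vartheta_k\). I would use \(T-1=q\vartheta_{k-1}\), and \(\vartheta_{k-2}=(T-1-q^{k-1})/q\), which follows from \(q\vartheta_{k-2}=\vartheta_{k-1}-1\). After clearing the denominators \(6\) and \(8\), I would expand the numerator as a polynomial in \(q\). Then I would reduce it modulo a suitable power of \(q\). It should suffice to work modulo \(q^{k-2}\), or even just modulo \(q\) times a small power, using \(T\equiv\vartheta_{k-2}\pmod{q^{k-2}}\) and \(T\equiv 1+q\pmod{q^2}\). The aim is to exhibit a nonzero low-order residue. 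This shows \(P_2(\mathcal M)\) is not an integer, contradicting Corollary~\ref{cor:endpoint-integrality}. Hence \(\nu_1(k,q)\ge N_0+1\).

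I expect this modular computation to be the main obstacle. Care is needed with the factors \(2\) and \(3\): \(\gcd(6,q)\) may be nontrivial, for instance when \(q=3^e\) or \(q=2^e\). So I would multiply through by \(24\) and track the \(2\)- and \(3\)-adic valuations separately. Where necessary I would split into the cases \(q\) odd with \(k\) even, and \(q\) even. In each case I would check that the lowest surviving term has \(q\)-adic valuation strictly below \(k-2\), after accounting for the \(24\).

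For the final equivalence, the case \(q=2\) follows from Proposition~\ref{prop:threshold-endpoint}: there \(\nu_1(k,2)=M_1=\vartheta_k=2\vartheta_k/2\). The case \(k=2\) follows from Theorem~\ref{thm:intro-fixed-size}(ii) with \(r=k-1=1\), which gives \(\nu_1(2,q)=\binom{q+1}{2}=q\vartheta_2/2\). Conversely, suppose \(q\ge3\) and \(k\ge3\). If \(qk\) is even, the bound just proved excludes equality. If \(qk\) is odd, then \(q\vartheta_k/2\) is not an integer, so it cannot equal \(\nu_1(k,q)\).
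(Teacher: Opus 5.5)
Your route is the paper's route, and every step you actually state is correct. This covers the integrality of \(q\vartheta_k/2\), \(R_1=0\), the forced character set \(\{0,1,\ldots,\vartheta_k-1\}\), and the identity \(q^{k-2}P_2(\mathcal M)=X\) with \(X=\binom{\vartheta_k}{3}-\vartheta_{k-2}\binom{q\vartheta_k/2}{2}\). It also covers the final equivalence via Proposition~\ref{prop:threshold-endpoint} and Theorem~\ref{thm:intro-fixed-size}(ii). The gap is that the only step with real content, \(q^{k-2}\nmid X\), is stated as an aim and not proved. It cannot be taken for granted: the same test fails to exclude the endpoint for \((q,k)=(5,3),(7,3),(9,3),(3,5)\) (Remark~\ref{rem:mws-33}). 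As shown below, for \(p\ge5\) it succeeds only because \(k\ge4\). Reducing modulo \(q^{k-2}\) via \(T\equiv\vartheta_{k-2}\) is also not the right tool. With \(q=p^a\) you need the exact \(p\)-adic valuation of \(X\), so you must isolate the precise power of \(q\) dividing \(24X\).

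The missing computation is short once you factor correctly. Put \(C=\vartheta_{k-2}\), so \(\vartheta_{k-1}=1+qC\) and \(\vartheta_k=1+q+q^2C\). Since \(\vartheta_k-1=q\vartheta_{k-1}\) and \(\binom{q\vartheta_k/2}{2}=q\vartheta_k(q\vartheta_k-2)/8\), both terms of \(24X\) contain the factor \(q\vartheta_k\):
\[24X=q\vartheta_k Z,\qquad Z=4\vartheta_{k-1}(\vartheta_k-2)-3C(q\vartheta_k-2)=q^3C^2+5q^2C-7qC+4q+6C-4.\]
For \(k\ge3\) we have \(C\equiv\vartheta_{k-1}\equiv\vartheta_k\equiv1\pmod q\). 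Hence \(Z\equiv-4+6=2\pmod q\) and \(p\nmid\vartheta_k\). Therefore \(v_p(X)=a+v_p(Z)-v_p(24)\), which equals:
\(a\) if \(p\ge5\);
\(a-1\) if \(p=3\);
\(a+1-3=a-2\) if \(p=2\), since then \(q\ge4\) and so \(Z\equiv2\pmod 4\).
Each value is below \(a(k-2)\). For \(p\ge5\) this needs \(k\ge4\), which holds because \(q\) odd and \(qk\) even force \(k\) to be even. Thus \(P_2(\mathcal M)=X/q^{k-2}\notin\Z\), the desired contradiction. Incidentally \(Z>0\), so \(X>0\); as you guessed, the sign test never helps here.
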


\begin{proof}
For \(m\ge1\) write \(\vartheta_m=(q^m-1)/(q-1)\), and put \(\vartheta_0=0\). Then \(M_1=\vartheta_k\), \(B_1=\vartheta_{k-1}\), \(B_1^{(2)}=\vartheta_{k-2}\), and \(B_1-B_1^{(2)}=q^{k-2}\). Since \(\vartheta_k-1=q\vartheta_{k-1}\), we have \(H_1/B_1=q\vartheta_k/2\). If \(q\) is even, then \(q\vartheta_k\) is even. If \(q\) is odd, then \(k\) is even, and \(\vartheta_k=\sum_{j=0}^{k-1}q^j\) is a sum of \(k\) odd integers, hence even. Thus \(N_0=q\vartheta_k/2\) and \(R_1=0\).

Suppose that a \(1\)-MCS projective system of size \(N_0\) exists. Since \(R_1=0\), Proposition~\ref{prop:mcs-first-moment} gives \(\lambda=0\) and \(\varepsilon_i=0\) for every \(i\), and Corollary~\ref{cor:endpoint-integrality} gives
\[P_2(\mathcal M)=\frac{X}{q^{k-2}},\quad X=\binom{\vartheta_k}{3}-\vartheta_{k-2}\binom{N_0}{2}.\]
Put \(C=\vartheta_{k-2}\). Then \(\vartheta_{k-1}=1+qC\) and \(\vartheta_k=1+q+q^2C\), and a direct expansion gives
\begin{equation}\label{eq:strictly-compact-X}
24X=q\vartheta_kZ,\quad Z=q^3C^2+5q^2C-7qC+4q+6C-4.
\end{equation}
Write \(q=p^a\) with \(p\) prime, and let \(v_p\) denote the \(p\)-adic valuation. Since \(k\ge3\), we have \(C\equiv1\pmod q\), and hence \(Z\equiv2\pmod q\) and \(\vartheta_k\equiv1\pmod q\). If \(p\ge5\), then \(v_p(Z)=v_p(\vartheta_k)=v_p(24)=0\), so \(v_p(X)=a\); moreover \(q\) is odd, so \(k\ge4\) and \(a<a(k-2)\). If \(p=3\), then \(v_3(Z)=v_3(\vartheta_k)=0\) and \(v_3(24)=1\), so \(v_3(X)=a-1<a(k-2)\). If \(p=2\), then \(q\ge4\), so \(Z\equiv2\pmod4\) and \(\vartheta_k\) is odd; hence \(v_2(X)=a+1-3=a-2<a(k-2)\). In every case \(q^{k-2}\nmid X\), so \(P_2(\mathcal M)\) is not an integer, a contradiction. This proves \(\nu_1(k,q)\ge N_0+1\).

For the last assertion, let \(q,k\ge3\). If \(qk\) is even, the first part applies. If \(qk\) is odd, then \(\vartheta_k\equiv k\equiv1\pmod2\), so \(q\vartheta_k/2\) is not an integer. Conversely, \(\nu_1(k,2)=2^k-1=2\vartheta_k/2\) by Proposition~\ref{prop:threshold-endpoint}, and \(\nu_1(2,q)=q(q+1)/2=q\vartheta_2/2\) by Theorem~\ref{thm:intro-fixed-size}(ii) (see also \cite{AN2019}).
\end{proof}

\begin{remark}\label{rem:mws-33}
For \(q\vartheta_k\) odd, the endpoint \(N_0=(q\vartheta_k+1)/2\) has \(R_1=\vartheta_{k-1}/2>0\), and Corollary~\ref{cor:endpoint-integrality} must be checked profile by profile. For \((q,k)=(3,3)\), we have \(M_1=13\), \(B_1=4\), \(B_1^{(2)}=1\), \(N_0=20\), and \(R_1=2\). The only gap profiles are \(\varepsilon_{12}=2\) and \(\varepsilon_{11}=\varepsilon_{12}=1\), all other gaps being zero, and \eqref{eq:second-obstruction-profile} gives \(P_2(\mathcal M)=121/3\) and \(P_2(\mathcal M)=119/3\), respectively. Hence \(\nu_1(3,3)\ge21\). On the other hand, the same test does not exclude \(N_0\) for \((q,k)=(5,3)\), \((7,3)\), \((9,3)\), or \((3,5)\).
\end{remark}

\subsection{Upper bounds for the \(r\)-MCS threshold}
\label{subsec:threshold-upper}

We now turn to upper bounds for the \(r\)-MCS threshold. For a prescribed separating support \(S\), the parameter \(\tau_r(S)\) is the minimum positive-labeling cost. We bound this cost and apply the result to the full point set and to the complement of one flat.

For an \(r\)-separating set \(S\subseteq\Pi\) and positive integers \(a_P\), write
\[\chi_a(\Lambda)=\sum_{P\in S\cap\Lambda}a_P,\quad \Lambda\in\mathcal F_r(\Pi).\]
By Proposition~\ref{prop:separating-spans}, \(S\) spans \(\Pi\). Thus an injective map \(\chi_a\) defines an \(r\)-MCS system of total multiplicity \(\sum_{P\in S}a_P\).

A \emph{finite hypergraph} is a pair \(\mathcal H=(X,\mathcal E)\), where \(X\) is a finite set of \emph{vertices} and \(\mathcal E\) is a family of subsets of \(X\), called \emph{hyperedges}.
For a labeling \(a:X\to\Z_{\ge0}\), the \emph{weight} of an edge \(E\in\mathcal E\) is \(\omega_a(E)=\sum_{P\in E}a(P)\). The labeling is an \emph{edge discriminator} if the weights of all edges are positive and pairwise distinct. We use this notion only for hypergraphs with distinct, nonempty edges.
We recall the initial-label version of the edge-discrimination lemma of Bhattacharya, Das, and Ganguly \cite[Lemma~1]{BDG2014}, using the notation needed below.

\begin{lemma}[Bhattacharya-Das-Ganguly]\label{BDG}
Let \(\mathcal H=(X,\mathcal E)\) be a finite hypergraph with \(h\) distinct nonempty edges, and order its vertices as \(X=\{P_1,\ldots,P_u\}\). For \(1\le i\le u\), let
\[c_i=\bigl|\{E\in\mathcal E:P_i\in E,\ E\subseteq\{P_1,\ldots,P_i\}\}\bigr|.\]
Given any initial labeling \(\kappa:X\to\Z_{\ge0}\), there exists an edge discriminator \(a:X\to\Z_{\ge0}\) such that \(a(P)\ge\kappa(P)\) for every \(P\in X\) and
\begin{equation}\label{eq:BDG-initial-labeling}
\sum_{P\in X}a(P)
\le \binom{h+1}{2}
-\sum_{\substack{1\le i\le u\\\kappa(P_i)>0}}c_i
+\sum_{P\in X}\kappa(P).
\end{equation}
\end{lemma}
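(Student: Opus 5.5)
We would prove Lemma~\ref{BDG} by a greedy induction along the given vertex order $P_1,\ldots,P_u$, fixing one label at a time. After step $i$ the labels $a(P_1),\ldots,a(P_i)$ are frozen. Call an edge \emph{completed at step $i$} if $P_i\in E\subseteq\{P_1,\ldots,P_i\}$; there are exactly $c_i$ of them, and $\sum_i c_i=h$. Write $e_i=c_1+\cdots+c_i$ for the number of edges completed by step $i$. The inductive invariant is that the $e_i$ completed edges have pairwise distinct positive weights, and that these weights are never changed afterwards.

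\emph{Step $i$.} Each edge completed at step $i$ has weight $s_E+a(P_i)$, where $s_E=\omega_a(E\setminus\{P_i\})$ is already determined. We take $a(P_i)$ to be the least integer $x\ge\kappa(P_i)$ satisfying three conditions. First, the new weights $s_E+x$ are positive; this is only a restriction when $x=0$ and some $s_E=0$. Second, no $s_E+x$ equals one of the $e_{i-1}$ old weights. Third, the new weights are pairwise distinct among themselves.

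\emph{Counting excluded values.} The first two conditions exclude at most $c_i\,e_{i-1}+c_i$ values of $x$, since each new edge forbids one value per old weight and one value for positivity. A more careful count is what produces the triangular number: order the new edges and insist that their weights fill the smallest unused positive integers. Then the weights used by step $i$ lie in $\{1,\ldots,e_i\}$, and one expects the increment over $\kappa(P_i)$ to be at most about $e_i-c_i$ whenever $\kappa(P_i)>0$. Summing over $i$ then gives
\[
\sum_P a(P)\le\sum_P\kappa(P)+\sum_i\bigl(e_i-c_i[\kappa(P_i)>0]\bigr)\le\sum_P\kappa(P)+\binom{h+1}{2}-\sum_{\kappa(P_i)>0}c_i,
\]
using $\sum_i e_i\le 1+2+\cdots+h$ after collapsing the steps with $c_i=0$. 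The subtracted terms $c_i$ reflect that a positive initial label already pushes the new weights past $c_i$ of the forbidden slots.

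\emph{Main obstacle.} The third condition can fail for every $x$. Two distinct edges completed at step $i$ may have equal residual sums $s_E=s_{E'}$, because $E\setminus\{P_i\}$ and $E'\setminus\{P_i\}$ need not themselves be edges, and then no choice of $a(P_i)$ separates $E$ from $E'$. To handle this, we would first try strengthening the invariant: require that all distinct prefix traces $E\cap\{P_1,\ldots,P_i\}$ of not-yet-completed edges also receive distinct partial sums, so that $s_E\ne s_{E'}$ is guaranteed. The new cost is that $x$ must also avoid coincidences $s_T+x=s_{T'}$ between traces that contain $P_i$ and traces that do not. The crux is to show this extra set of forbidden values is still absorbed within the budget $\binom{h+1}{2}$. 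For this we would exploit that only $h$ edges exist in total, so at most $h$ traces are alive at any time, and charge each forbidden value to an edge that is completed no later than the current step. If this charging fails in general, we would instead reproduce the argument of \cite[Lemma~1]{BDG2014} directly, since the paper takes the lemma from there.
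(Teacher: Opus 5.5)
\textbf{This is a genuine gap.} The step you call the main obstacle is left open, and the repair you suggest is the wrong one. Requiring all distinct prefix traces $E\cap\{P_1,\ldots,P_i\}$ to have distinct partial sums over-constrains the process. At step $i$ it can forbid one value of $x$ for every pair (trace through $P_i$, trace avoiding $P_i$). Two traces that differ can also collide again at every later step where exactly one of them gains a vertex. Nothing you write bounds the total of these forbidden sets by $\binom{h+1}{2}$, and the crude count is far larger.

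There is a second problem. Your per-step estimate ``increment at most about $e_i-c_i$'' is unjustified. The $c_i$ new weights $s_E+x$ move rigidly together, so all you can say is that at most $c_ie_{i-1}$ values of $x$ are forbidden, and for $c_i\ge2$ this is not bounded by $e_{i-1}$. That count would still fit the budget, because $\binom{h+1}{2}=\sum_i\bigl(c_ie_{i-1}+\binom{c_i+1}{2}\bigr)$, but your displayed chain does not establish it.

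\textbf{The missing idea: charge pairs by their symmetric difference, not by completion time.} For distinct edges $E,F$, charge the pair to $\sigma(E,F)=\max\{i:P_i\in E\triangle F\}$.
\begin{itemize}
\item Every vertex after $P_\sigma$ lies in both or neither of $E,F$. Hence the final difference $\omega_a(E)-\omega_a(F)$ is already determined by $a(P_1),\ldots,a(P_\sigma)$.
\item That difference has the form $D\pm a(P_\sigma)$ with $D$ fixed before step $\sigma$. So each of the $\binom h2$ pairs forbids at most one value of $a(P_\sigma)$, at exactly one step.
\item Charge positivity of $E$ to its last vertex $P_i$. If $\kappa(P_i)>0$ it is automatic. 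Otherwise all edges completed at step $i$ together forbid at most the single value $0$.
\end{itemize}
Now take $a(P_i)$ to be the least integer $\ge\kappa(P_i)$ outside the forbidden set at step $i$. Using $\sum_ic_i=h$, this gives
\[
\sum_P a(P)\le\sum_P\kappa(P)+\binom h2+\sum_{\kappa(P_i)=0}c_i=\sum_P\kappa(P)+\binom{h+1}{2}-\sum_{\kappa(P_i)>0}c_i .
\]
Your obstacle disappears under this scheme. Two edges completed at the same vertex $P_i$ have $\sigma(E,E')<i$, so their residual sums were already forced apart at step $\sigma(E,E')$.

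The paper does not prove this lemma; it simply cites Lemma~1 of Bhattacharya, Das and Ganguly. Your fallback of reproducing that argument therefore matches what the paper does. As a self-contained proof, however, the proposal is incomplete until the charging above replaces your trace invariant.
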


We now apply Lemma~\ref{BDG} to projective separators. Let \(S\subseteq\Pi\) be a \(r\)-separating set, and consider its \emph{trace hypergraph} \(\mathcal H_r(S)=(S,\mathcal E_r(S))\), where
\[\mathcal E_r(S)=\{S\cap\Lambda:\Lambda\in\mathcal F_r(\Pi),\
S\cap\Lambda\ne\varnothing\},\qquad
h=|\mathcal E_r(S)|.\]
Its vertices are the projective points of \(S\), and its edges are the nonempty traces of codimension-\(r\) flats. Since \(S\) is \(r\)-separating, all \(M_r\) traces are distinct and at most one is empty. Thus \(h\in\{M_r-1,M_r\}\), and the trace hypergraph has no repeated or empty edges. For each nonempty trace, \(\chi_a(\Lambda)=\omega_a(S\cap\Lambda)\).

\begin{theorem}\label{thm:positive-labeling}
Let \(1\le r\le k-1\), let \(S\subseteq\Pi\) be a \(r\)-separating set, and put \(h=|\mathcal E_r(S)|\). There exist positive integer labels \(a_P\), \(P\in S\), such that the weighted traces \(\chi_a(\Lambda)\), \(\Lambda\in\mathcal F_r(\Pi)\), are pairwise distinct and
\[\sum_{P\in S}a_P\le |S|+\binom h2.\]
Consequently,
\begin{equation}\label{eq:total-labeling-cost}
\tau_r(S)\le |S|+\binom h2\le |S|+H_r.
\end{equation}
The same labeling satisfies \(1\le a_P\le1+\binom h2\le H_r+1\) for every \(P\in S\).
\end{theorem}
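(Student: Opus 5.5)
The plan is to apply Lemma~\ref{BDG} to the trace hypergraph \(\mathcal H_r(S)\) with the constant initial labeling \(\kappa\equiv1\). Order the points of \(S\) arbitrarily as \(P_1,\ldots,P_u\), with \(u=|S|\). Since \(\kappa(P_i)=1>0\) for every \(i\), the subtracted term in \eqref{eq:BDG-initial-labeling} is \(\sum_{i=1}^u c_i\).

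The key counting step is the identity \(\sum_i c_i=h\). Every nonempty edge \(E\) has a largest-index vertex \(P_i\). That index is the unique \(i\) with \(P_i\in E\subseteq\{P_1,\ldots,P_i\}\), so each edge is counted by exactly one \(c_i\). The lemma therefore yields an edge discriminator \(a\ge1\) with
\[\sum_{P\in S}a(P)\le\binom{h+1}{2}-h+|S|=\binom h2+|S|.\]

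Next I would check that \(a\) really gives injectivity on \(\mathcal F_r(\Pi)\). For nonempty traces, \(\chi_a(\Lambda)=\omega_a(S\cap\Lambda)\). Distinct flats have distinct traces because \(S\) is separating, and the edge weights are pairwise distinct, so these values of \(\chi_a\) are pairwise distinct. Edge weights are also positive. At most one flat has empty trace, and its value is \(\chi_a=0\), which differs from every positive edge weight. This proves injectivity. Combined with Proposition~\ref{prop:separating-spans}, it shows that \(\tau_r(S)\le|S|+\binom h2\), and \(h\le M_r\) gives \(\binom h2\le H_r\).

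For the per-label bound, I would bound each label separately. The argument is that \(a_P\ge1\), and \(a_P\) minus \(1\) is at most the total excess \(\sum_P(a_P-1)\le\binom h2\), since all the excesses are nonnegative. Hence \(a_P\le1+\binom h2\le1+H_r\).

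The main obstacle is only bookkeeping: the identity \(\sum c_i=h\), and the handling of the empty trace separately from the hypergraph edges. Nothing deeper should be needed.
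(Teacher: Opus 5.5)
Your proposal is correct and follows the paper's proof essentially step by step: apply the Bhattacharya--Das--Ganguly lemma to the trace hypergraph with $\kappa\equiv1$, count each edge once by its largest-index vertex to get $\sum_i c_i=h$, and separate the (at most one) empty trace of weight $0$ from the positive edge weights. The per-label bound comes, as in the paper, from the other $|S|-1$ labels each contributing at least $1$.
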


\begin{proof}
Apply Lemma~\ref{BDG} to \(\mathcal H_r(S)\) with the constant initial labeling \(\kappa(P)=1\) for every \(P\in S\). Every nonempty edge has exactly one largest-index vertex, so \(\sum_{i=1}^{|S|}c_i=h\). Since all initial labels are positive, the bound \eqref{eq:BDG-initial-labeling} becomes
\[\sum_{P\in S}a_P\le\binom{h+1}{2}-h+|S|=|S|+\binom h2.\]
The inequality \(a_P\ge\kappa(P)=1\) guarantees that all labels are positive. Thus they define a projective multiset whose positive support is precisely \(S\), and which therefore spans \(\Pi\). By Lemma~\ref{BDG}, the nonempty traces have positive, pairwise distinct weights. If an empty trace occurs, its weight is zero; since there is at most one empty trace, this value is distinct from all other weights. Hence \(\chi_a\) is injective on \(\mathcal F_r(\Pi)\), and the constructed labeling is admissible for \(\tau_r(S)\). Using \(h\le M_r\) now proves \eqref{eq:total-labeling-cost}.

Finally, the other \(|S|-1\) labels contribute at least \(|S|-1\), so for every \(P\in S\) we have \(a_P\le 1+\binom h2\le H_r+1\), as claimed.
\end{proof}

The initial labeling \(\kappa\equiv1\) is important. Adding one to arbitrary nonnegative labels changes a trace sum by its cardinality and can destroy injectivity when trace sizes differ. Similarly, subtracting one from positive labels is not valid on an arbitrary fixed support. The first construction below permits this subtraction because all flats have the same number of points; the second instead uses a support with exactly one empty trace.

\begin{proof}[Proof of Theorem~\ref{thm:intro-threshold}]
\emph{(i)} Let \(\mathcal M\) be an \(r\)-MCS system of minimum size, and put \(S=\supp(\mathcal M)\). If two different flats had the same trace on \(S\), their character sums would be equal. Thus \(S\) is \(r\)-separating, and its multiplicities form an admissible labeling of cost \(\nu_r(k,q)\). It follows that \(\min_S\tau_r(S)\le\nu_r(k,q)\).
Conversely, let \(S\) be any \(r\)-separating set and choose a labeling attaining \(\tau_r(S)\). Such a labeling exists by Definition~\ref{def:separating-labeling} and the observation following it. The labels define an \(r\)-MCS projective system because \(S\) spans \(\Pi\) by Proposition~\ref{prop:separating-spans}. Hence \(\nu_r(k,q)\le\tau_r(S)\). Taking the minimum proves part~\textup{(i)}.

\smallskip
\emph{(ii)} The lower bound is Proposition~\ref{prop:mcs-first-moment}. We establish the upper bounds in two steps.
First take \(S=\Pi\). All \(M_r\) flats are nonempty, so Theorem~\ref{thm:positive-labeling} gives positive labels \(a_P\) with distinct flat sums and
\(\sum_{P\in\Pi}a_P\le\vartheta_k+H_r.\)
Set \(b_P=a_P-1\ge0\). Every codimension-\(r\) flat contains \(\theta_q(k-r-1)\) points. Therefore,
\[\sum_{P\in\Lambda}b_P=\sum_{P\in\Lambda}a_P-\theta_q(k-r-1),
\quad \Lambda\in\mathcal F_r(\Pi),\]
so all character sums remain distinct. The set \(S_b=\{P:b_P>0\}\) is consequently \(r\)-separating and spans \(\Pi\) by Proposition~\ref{prop:separating-spans}. Discarding zero labels gives an \(r\)-MCS system with total multiplicity at most \(H_r\). Thus
\begin{equation}\label{eq:uniform-quadratic-upper}
\nu_r(k,q)\le H_r.
\end{equation}

For the sharper bound, fix \(\Lambda_0=\PG(W_0)\in\mathcal F_r(\Pi)\) and put \(S_0=\Pi\setminus\Lambda_0\). We show that \(S_0\) is \(r\)-separating and that \(\Lambda_0\) is its unique empty trace. Write \(s=k-r\). If \(\Lambda=\PG(U)\ne\Lambda_0\), then \(U\cap W_0\) is a proper subspace of \(U\). The vectors of \(U\setminus W_0\) span \(U\). Indeed, choose \(x\in U\setminus W_0\). For every \(y\in U\cap W_0\), both \(x\) and \(x+y\) lie in \(U\setminus W_0\), and \(y=(x+y)-x\). Hence, we have \(\langle S_0\cap\Lambda\rangle
=\langle\Lambda\setminus\Lambda_0\rangle=\Lambda\) for \(\Lambda\ne\Lambda_0\).
These traces are nonempty and distinct, since each determines its spanning flat. The trace of \(\Lambda_0\) is empty. Thus \(S_0\) is \(r\)-separating with \(h=M_r-1\), and
\(|S_0|=\vartheta_k-\theta_q(s-1)=\frac{q^k-q^{k-r}}{q-1}.\)
By Theorem~\ref{thm:positive-labeling}, we have
\begin{equation}\label{eq:complement-flat-upper}
\nu_r(k,q)\le\tau_r(S_0)
\le \binom{M_r-1}{2}+\frac{q^k-q^{k-r}}{q-1}.
\end{equation}

It remains to compare this estimate with \(H_r\). We have \(M_r\ge\vartheta_k\). To see this, put \(d=\min\{r,k-r\}\) and use symmetry of Gaussian coefficients. For \(1\le j<d\), \(\qbinom{k}{j+1}/{\qbinom{k}{j}}=\frac{q^{k-j}-1}{q^{j+1}-1}\ge1\)
since \(d\le k/2\). Hence \(M_r=\qbinom{k}{d}\ge\qbinom{k}{1}=\vartheta_k\). Consequently,
\[
\binom{M_r-1}{2}+\vartheta_k-\theta_q(s-1)
=H_r-(M_r-\vartheta_k)-(\theta_q(s-1)-1)
\le H_r.
\]
When \(s\ge2\), the last inequality is strict because \(\theta_q(s-1)>1\). When \(s=1\), equivalently \(r=k-1\), it is an equality, and Theorem~\ref{thm:intro-fixed-size}\textup{(ii)} gives \(\nu_{k-1}(k,q)=H_{k-1}\). This proves part~\textup{(ii)} and the equality assertion.
\end{proof}

For \(r=1\), Theorem~\ref{thm:intro-threshold} concerns the maximum weight spectrum codes of \cite{SZSC,AN2019}, that is, \([n,k]_q\) codes with \(\vartheta_k\) distinct nonzero weights. Let \(n_{\mathrm{MWS}}(k,q)\) denote the minimum length of such a code.

\begin{corollary}[Maximum weight spectrum codes]\label{cor:mws-length}
Let \(k\ge2\). Then \(n_{\mathrm{MWS}}(k,q)=\nu_1(k,q)\) and
\begin{equation}\label{eq:mws-length}
\left\lceil\frac{q\vartheta_k}{2}\right\rceil\le n_{\mathrm{MWS}}(k,q)\le\binom{\vartheta_k-1}{2}+q^{k-1}\le3q^{2k-2}.
\end{equation}
In particular, \(\limsup_{k\to\infty}\frac1k\log_qn_{\mathrm{MWS}}(k,q)\le2\) for every fixed prime power \(q\).
\end{corollary}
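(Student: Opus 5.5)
The corollary has four parts, and I will prove them in order. First I identify $n_{\mathrm{MWS}}(k,q)$ with $\nu_1(k,q)$. Then I specialize the two bounds of Theorem~\ref{thm:intro-threshold}(ii) to $r=1$. Next I bound the resulting expression crudely by $3q^{2k-2}$. Finally I read off the $\limsup$.

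For the identification I use Proposition~\ref{prop:char-r} with $r=1$. For a projective system $\mathcal M$, $\Omega_1(\mathcal M)$ is exactly the set of nonzero weights of $C_{\mathcal M}$. So $\mathcal M$ is $1$-MCS precisely when $C_{\mathcal M}$ has $M_1=\vartheta_k$ distinct nonzero weights. This shows $n_{\mathrm{MWS}}(k,q)\le\nu_1(k,q)$.

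For the reverse inequality, take a maximum weight spectrum $[n,k]_q$ code $C$. Puncturing its identically zero coordinates, as in Subsection~\ref{subsec:coding-correspondence}, gives a nondegenerate code of length $n_\circ\le n$ with the same weights. That code comes from a $1$-MCS projective system of size $n_\circ$. Hence $\nu_1(k,q)\le n_\circ\le n$, and minimizing over $C$ gives equality.

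The numerical bounds come from Theorem~\ref{thm:intro-threshold}(ii) with $r=1$. Here $M_1=\vartheta_k$ and $B_1=\vartheta_{k-1}$. Since $\vartheta_k-1=q\vartheta_{k-1}$, we get
\[
\frac{H_1}{B_1}=\frac{\vartheta_k(\vartheta_k-1)}{2\vartheta_{k-1}}=\frac{q\vartheta_k}{2}.
\]
This is the lower bound. The upper bound is $\binom{\vartheta_k-1}{2}+(q^k-q^{k-1})/(q-1)=\binom{\vartheta_k-1}{2}+q^{k-1}$.

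To bound this by $3q^{2k-2}$, I use $\vartheta_k-1=q\vartheta_{k-1}$ and $\vartheta_{k-1}\le q^{k-1}/(1-1/q)\le 2q^{k-2}$. Then
\[
\binom{\vartheta_k-1}{2}\le \frac{q^2\vartheta_{k-1}^2}{2}\le \frac{q^2\cdot 4q^{2k-4}}{2}=2q^{2k-2}.
\]
Adding $q^{k-1}\le q^{2k-2}$, valid since $k\ge1$, gives the total bound $3q^{2k-2}$. This elementary estimate is the only step needing any care, and it is routine.

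Finally, $\frac1k\log_q(3q^{2k-2})=\frac{2k-2+\log_q3}{k}\to2$ as $k\to\infty$, so the $\limsup$ is at most $2$.
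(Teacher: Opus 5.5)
Your proof is correct and follows the paper's argument essentially step for step: you identify $n_{\mathrm{MWS}}(k,q)$ with $\nu_1(k,q)$ via Proposition~\ref{prop:char-r} together with puncturing of zero coordinates, specialize Theorem~\ref{thm:intro-threshold}(ii) to $r=1$ using $\vartheta_k-1=q\vartheta_{k-1}$, and finish with an elementary estimate (the paper uses $\vartheta_k\le 2q^{k-1}$ directly, which amounts to the same thing). One small slip: you should write $\vartheta_{k-1}\le q^{k-2}/(1-1/q)\le 2q^{k-2}$, because $q^{k-1}/(1-1/q)=q^k/(q-1)$ exceeds $2q^{k-2}$; your endpoint $\vartheta_{k-1}\le 2q^{k-2}$ is nevertheless true, so the rest of the estimate stands.
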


\begin{proof}
A code \(C\) is a maximum weight spectrum code if and only if \(|\{\wt(D):D\le C,\ \dim D=1\}|=\vartheta_k\). Puncturing the identically zero coordinates of \(C\) preserves this property and does not increase the length, so the minimum length is attained by a nondegenerate code; by Proposition~\ref{prop:char-r}, nondegenerate maximum weight spectrum codes of length \(n\) correspond to \(1\)-MCS projective \([n,k]\) systems. Hence \(n_{\mathrm{MWS}}(k,q)=\nu_1(k,q)\).
Now take \(r=1\) in Theorem~\ref{thm:intro-threshold}(ii). Here \(M_1=\vartheta_k\), \(B_1=\vartheta_{k-1}\), and \((q^k-q^{k-1})/(q-1)=q^{k-1}\). Since \(\vartheta_k-1=q\vartheta_{k-1}\), the lower bound \(\lceil H_1/B_1\rceil\) equals \(\lceil q\vartheta_k/2\rceil\). Finally, \(\vartheta_k\le2q^{k-1}\) gives \(\binom{\vartheta_k-1}{2}+q^{k-1}\le2q^{2k-2}+q^{k-1}\le3q^{2k-2}\), and the asymptotic statement follows.
\end{proof}

\begin{remark}\label{rem:mws-comparison}
The lower bound in \eqref{eq:mws-length} is \cite[Lemma~5.1]{AN2019}. For \(k=2\), the upper bound equals \(q(q+1)/2\), which is the exact value of \(n_{\mathrm{MWS}}(2,q)\) \cite{AN2019}, in accordance with Theorem~\ref{thm:intro-fixed-size}(ii). For \(k\ge3\), the previously known upper bounds on \(n_{\mathrm{MWS}}(k,q)\) are, to our knowledge, the following: the constructions of \cite{AN2019} have length growing like \(q^{\Theta(k^2)}\) (see \cite[Proposition~3.2]{A2019}); the construction of \cite[Proposition~3.3]{A2019} has length \(\theta_q(k-2)\binom{\vartheta_k}{2}\), which is of order \(q^{3k-4}/2\); and the averaging argument of \cite[Theorem~1]{CT2018} gives \(n_{\mathrm{MWS}}(k,q)\le D_qq^{4k}\) with a constant \(D_q\) depending only on \(q\). The bound \eqref{eq:mws-length} is of order \(q^{2k-2}/2\), and the labeling of Lemma~\ref{BDG} is algorithmic. Cohen and Tolhuizen \cite[Section~5]{CT2018} asked whether \(\lim_{k\to\infty}\frac1k\log_qn_{\mathrm{MWS}}(k,q)\) exists, observing that the lower and upper limits lie in the interval \([1,4]\); Corollary~\ref{cor:mws-length} reduces the upper limit to \(2\).
On the lower side, Corollary~\ref{cor:strictly-compact} and Remark~\ref{rem:mws-33} improve the bound \(\lceil q\vartheta_k/2\rceil\) by one whenever \(q,k\ge3\) and \(qk\) is even, and for \((q,k)=(3,3)\). In the terminology of \cite{MK2020}, strictly compact maximum weight spectrum codes, that is, codes of length \(q\vartheta_k/2\), exist if and only if \(q=2\) or \(k=2\). In particular, the hypothetical \([42,3,22]_4\) code discussed in \cite[Example~11]{MK2020} does not exist, and \(n_{\mathrm{MWS}}(3,3)\ge21\), whereas \cite{MK2020} gives \(n_{\mathrm{MWS}}(3,3)\ge20\).
\end{remark}

The threshold bounds have the following asymptotic consequences.
\begin{corollary}\label{cor:threshold-asymptotics}
\begin{enumerate}
\item[(i)] If \(q\) and \(r\) are fixed and \(k\to\infty\), then
\[
\nu_r(k,q)=\Omega_{q,r}(M_r),
\quad \nu_r(k,q)=O(M_r^2).
\]
\item[(ii)] If \(k,r\) are fixed, with \(1\le r\le k-1\), and \(q\to\infty\) through prime powers, then
\[
\nu_r(k,q)=\Omega_{k,r}\bigl(q^{r(k-r+1)}\bigr),
\quad \nu_r(k,q)=O_{k,r}\bigl(q^{2r(k-r)}\bigr).
\]
\end{enumerate}
\end{corollary}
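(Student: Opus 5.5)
The plan is to read off both parts directly from the two-sided bound \eqref{eq:threshold-summary} in Theorem~\ref{thm:intro-threshold}(ii):
\[
\lceil H_r/B_r\rceil\le\nu_r(k,q)\le H_r=\binom{M_r}{2}.
\]
The work then reduces to estimating the Gaussian coefficients \(M_r=\qbinom{k}{r}\) and \(B_r=\qbinom{k-1}{r}\) in each asymptotic regime. The upper bounds are immediate in both regimes, since \(\nu_r(k,q)\le H_r\le M_r^2/2\). For (ii) we additionally use the standard estimate \(\qbinom{k}{r}=\Theta_{k,r}(q^{r(k-r)})\) as \(q\to\infty\), obtained from the product formula because each factor \((q^{k-i}-1)/(q^{r-i}-1)\) is \(\Theta(q^{k-r})\). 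This gives \(O_{k,r}(q^{2r(k-r)})\).

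For the lower bounds, write
\[
\frac{H_r}{B_r}=\frac{M_r}{B_r}\cdot\frac{M_r-1}{2},
\qquad
\frac{M_r}{B_r}=\frac{q^k-1}{q^{k-r}-1}>q^r,
\]
where the ratio identity was already used in the proof of Proposition~\ref{prop:mcs-first-moment}. Hence \(\nu_r(k,q)\ge q^r(M_r-1)/2\).

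In regime (i), \(q\) and \(r\) are fixed and \(M_r\to\infty\) as \(k\to\infty\), so \(M_r-1\ge M_r/2\) eventually. This yields \(\nu_r\ge q^rM_r/4=\Omega_{q,r}(M_r)\). Alternatively one can simply invoke \eqref{eq:lower-bound-comparison}, \(H_r/B_r\ge M_r+r-1\), which gives \(\nu_r\ge M_r\) outright with constant \(1\); either route suffices.

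In regime (ii), combine \(q^r\) with \(M_r-1=\Theta_{k,r}(q^{r(k-r)})\) (valid since \(M_r\ge 1+q\ge 3\)). This gives \(\nu_r\ge c_{k,r}\,q^{r}\cdot q^{r(k-r)}=\Omega_{k,r}(q^{r(k-r+1)})\).

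There is no real obstacle here. The only care needed is to make the Gaussian coefficient estimate uniform in \(q\ge2\) for fixed \(k,r\), using \(q^{j}/2\le q^j-1\le q^j\) in each factor. This yields explicit constants depending only on \(k,r\) (e.g. \(2^{-r}\le M_r/q^{r(k-r)}\le 2^{r}\)), and completes both parts.
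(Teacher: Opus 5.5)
Your proposal is correct and follows essentially the same route as the paper: both read the two asymptotic statements off the bounds \(\lceil H_r/B_r\rceil\le\nu_r(k,q)\le H_r\) of Theorem~\ref{thm:intro-threshold}(ii), using the product-formula estimate \(M_r=\Theta_{k,r}(q^{r(k-r)})\) in regime (ii). The only difference is cosmetic: for the lower bounds you use the exact inequality \(M_r/B_r=(q^k-1)/(q^{k-r}-1)>q^r\) instead of estimating \(B_r=\Theta_{k,r}(q^{r(k-r-1)})\) separately, which gives you explicit constants.
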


\begin{proof}
For part~\textup{(i)}, the ratio
\(\frac{M_r}{B_r}=\frac{q^k-1}{q^{k-r}-1}\)
tends to \(q^r\). Therefore \(H_r/B_r=\Theta_{q,r}(M_r)\), and the lower bound follows from Theorem~\ref{thm:intro-threshold}. Its upper bound is at most \(H_r=O(M_r^2)\).

For part~\textup{(ii)}, by the Gaussian product formula, we have \(M_r=\Theta_{k,r}\bigl(q^{r(k-r)}\bigr)\) and \(B_r=\Theta_{k,r}\bigl(q^{r(k-r-1)}\bigr).\)
Thus \(H_r/B_r=\Theta_{k,r}(q^{r(k-r+1)})\), while \(H_r=\Theta_{k,r}(q^{2r(k-r)})\). Applying the two bounds in Theorem~\ref{thm:intro-threshold} proves the assertion.
\end{proof}
The complement-of-a-flat construction improves the explicit upper bound but not these asymptotic exponents. The exact cases \(r=k-1\) and \((q,r)=(2,1)\) are given by Theorem~\ref{thm:intro-fixed-size}\textup{(ii)} and Proposition~\ref{prop:threshold-endpoint}, respectively.

\section{Conclusion and open problems}\label{sec:open}

In this paper, we have developed an extremal theory for character spectra of finite projective systems. We establish the existence of \(r\)-FCS systems in a broad parameter range, and solve the size-constrained extremal problem completely when \(r=k-1\), including the minimum-cost multiplicity profiles. We also develop global moment identities and local flat-incidence bounds that impose strong structural constraints on large character spectra. 
For the \(r\)-MCS threshold, the first moment gives \(\nu_r(k,q)\ge\lceil H_r/B_r\rceil\), and the second moment can exclude the rounded endpoint. In the other direction, a positive additive-labeling theorem converts distinct traces into distinct integer character values. The full point set yields \(\nu_r(k,q)\le H_r\) after uniform subtraction of one from each label. Taking the complement of one flat gives the stronger explicit upper bound in Theorem~\ref{thm:intro-threshold}, with a strict improvement whenever \(r\le k-2\).

The case \(r=1\) deserves particular mention, since \(\nu_1(k,q)\) is the minimum length \(n_{\mathrm{MWS}}(k,q)\) of a maximum weight spectrum code (Corollary~\ref{cor:mws-length}). Here both directions of our method give new information. On the upper side, the complement-of-a-flat construction gives \(n_{\mathrm{MWS}}(k,q)\le\binom{\vartheta_k-1}{2}+q^{k-1}\), which is of order \(q^{2k-2}/2\). The previously known bounds were of orders \(q^{\Theta(k^2)}\) \cite{AN2019}, \(q^{3k-4}/2\) \cite[Proposition~3.3]{A2019}, and \(D_qq^{4k}\) \cite{CT2018}, so the lower and upper limits of \(\frac1k\log_qn_{\mathrm{MWS}}(k,q)\) in the problem of Cohen and Tolhuizen \cite{CT2018} now lie in \([1,2]\) instead of \([1,4]\). On the lower side, the second-moment obstruction of Proposition~\ref{prop:second-moment-obstruction} and Corollary~\ref{cor:endpoint-integrality} improves the first-moment bound \(\lceil q\vartheta_k/2\rceil\) of \cite{AN2019,MK2020} whenever \(q,k\ge3\) and \(qk\) is even (Corollary~\ref{cor:strictly-compact}). As a consequence, strictly compact maximum weight spectrum codes exist only in the classical cases \(q=2\) and \(k=2\). The same obstruction applies verbatim to every codimension \(r\), as Example~\ref{ex:binary-4-2} illustrates for \(r=2\).

The remaining questions concern the full-spectrum range, intermediate spectrum sizes, and the gap between the threshold bounds.

\begin{problem}\label{prob:complete-threshold}
Prove or disprove Conjecture~\ref{conj:rFCS} in the remaining range \(2\le r\le k-2\). Equivalently, determine whether an \(r\)-FCS projective system of rank \(k\) and total multiplicity \(n\) can exist when \(n-r+1\ge 2^{k-r+1}.\)
In particular, can such a system exist outside the common-flat configuration described in Proposition~\ref{prop:common-flat}?
\end{problem}

\begin{problem}\label{prob:intermediate-function}
Determine the extremal function \(L_r(n,k,q)\) in the intermediate range between the known \(r\)-FCS regime and the stable \(r\)-MCS regime, especially for \(1<r<k-1\). More generally, characterize the finite sets of integers that can occur as \(\Sigma_r(\mathcal M)\) for spanning projective multisets \(\mathcal M\) of total multiplicity at most \(n\).
\end{problem}

\begin{problem}\label{prob:threshold-gap}
Beyond the exact cases \(r=k-1\) and \((q,r)=(2,1)\), improve the bounds
\[
\left\lceil\frac{H_r}{B_r}\right\rceil
\le\nu_r(k,q)
\le\binom{M_r-1}{2}+\frac{q^k-q^{k-r}}{q-1}.
\]
In particular, determine the asymptotic order of \(\nu_r(k,q)\) in the two regimes of Corollary~\ref{cor:threshold-asymptotics}. Can the higher moments improve the lower bound, or can the point--flat incidence structure be used to reduce the total additive-labeling cost and improve the upper-bound exponents?
\end{problem}

These questions form an integral realizability problem for the point--flat incidence map. The Smith normal forms computed by Chandler, Sin, and Xiang \cite{CSX2006} describe the corresponding integral lattice. Requiring nonnegative weights with spanning support and bounded total multiplicity adds further constraints. The moment identities provide necessary conditions, while the labeling constructions provide sufficient conditions for maximum spectra. By Proposition~\ref{prop:char-r}, the same questions describe the support-weight spectra of \(r\)-dimensional subcodes.

\paragraph{Acknowledgments.} During the preparation and revision of this manuscript, the authors used ChatGPT (OpenAI, including GPT-6) for English-language editing, manuscript organization, and preliminary checks of proofs. ChatGPT also assisted in the exploration and development of the improved upper bound in Theorem 1.5(ii). The authors take full responsibility for the mathematical statements, proofs, originality claims, and references in the manuscript.
The research of Shitao Li was supported by the National Natural Science Foundation of China under Grant Nos. 12526612 and 126011020. The research of Yang Li was supported by the Nanyang Technological University Research under Grant 04INS000047C230GRT01. The research of Gaojun Luo was supported by the National Natural Science Foundation of China under Grant 12401690.

\end{document}